\newtheorem{theorem}{Theorem}[section]
\newtheorem{lemma}[theorem]{Lemma}
\newtheorem{proposition}[theorem]{Proposition}
\newtheorem{corollary}[theorem]{Corollary}
\theoremstyle{definition}
\newtheorem{definition}[theorem]{Definition}
\theoremstyle{remark}
\newtheorem{remark}[theorem]{Remark}
\numberwithin{equation}{section}
\begin{document}

\title{\sc{The limit set for discrete complex hyperbolic  groups }  }

\author{Angel Cano}
\address{ IMATE UNAM, Unidad Cuernavaca, Av. Universidad s/n. Col. Lomas de Chamilpa,
C.P. 62210, Cuernavaca, Morelos, M\'exico.}
\email{angelcano@im.unam.mx}

\author{Bingyuan Liu}
\address{University of California, Riverside, 900 University Ave, Riverside, CA 92521, USA.}
\email{bingyuan@ucr.edu }

\author{Marlon M. L\'opez}
\address{UERJ,  R. S\~{a}o Francisco Xavier, 524 - Maracan\~{a}, Rio de Janeiro, RJ, Brasil.}
\email{marlon.flores@uerj.br}

  \thanks{Supported by grants of the PAPPIT's projects IN106814,  IN101816 and Conacyt's project 164447} 

\subjclass{Primary 37F99, 32Q, 32M Secondary 30F40, 20H10, 57M60, 53C}



\begin{abstract}
Given a discrete subgroup $\Gamma$ of $PU(1,n)$ that acts by isometries on the  unit complex ball  $\Bbb{H}^n_{\Bbb{C}}$. In this 
setting a lot of work has been done in order to understand the  action of the group. However, when we look at the  action of $\Gamma$ 
on all of $ \Bbb{P}^n_{\Bbb{C}}$ little or nothing is known. In this paper, we study the action in the whole projective space  and 
we are able to  show  that its equicontinuity agrees with its Kulkarni discontinuity  set.  Moreover, in the non-elementary case, this set
turns out to be the largest open set on which the group acts properly and discontinuously. It can be described as the complement of the
union of all complex projective hyperplanes in $ \Bbb{P}^n_{\Bbb{C}}$  which are tangent to $\partial \Bbb{H}^n_{\Bbb{C}}$ at points 
in the Chen-Greenberg limit set $\Lambda_{CG}(\Gamma)$. 
\end{abstract}

\maketitle

\section*{Introduction}

The theory of Complex Kleinian groups  is still in its early developments. One unsolved problem is about the existence of largest open sets where a given group acts properly discontinuously. Little is known about this problem, see for instance 
\cite{CNS, CS,frances, mendez, SV1, SV2}. In this article, we answer this question in a  special case. We do it for complex hyperbolic groups. In this  case we prove:

\begin{theorem} \label{t:principal}
Let $\Gamma\subset PU(1,n)$ be  discrete subgroup, then the Kulkarni limit set  of $\Gamma$ can be described as the  hyperplanes tangent to $\partial \Bbb{H}^n_\Bbb{C}$ at  points in the Chen-Greenberg limit set of $\Gamma$, {\it i. e.}
\[
\Lambda_{Kul}(\Gamma)=\bigcup_{p\in \Lambda_{CG}(\Gamma)}p^\bot.
\]
Moreover, if $\Gamma$ is non-elementary, then $\Omega_{Kul}(\Gamma)$ agrees with the equicontinuity set of $\Gamma$ and is the largest open set on which the group acts properly and discontinuously. 
\end{theorem}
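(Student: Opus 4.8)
The plan is to leverage the explicit description $\Lambda_{Kul}(\Gamma)=\bigcup_{p\in\Lambda_{CG}(\Gamma)}p^\bot$ from the first part, together with a structural description of the cluster points of sequences in $PU(1,n)$. Proper discontinuity of the $\Gamma$-action on $\Omega_{Kul}(\Gamma)=\Bbb{P}^n_\Bbb{C}\setminus\Lambda_{Kul}(\Gamma)$ is furnished for free by Kulkarni's general construction of the discontinuity region, so the genuine content is the identity $\mathrm{Eq}(\Gamma)=\Omega_{Kul}(\Gamma)$ and the maximality assertion. I would prove the set equality by two inclusions and then deduce maximality from it.

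For $\Omega_{Kul}(\Gamma)\subseteq\mathrm{Eq}(\Gamma)$, the key input is the following control on divergent sequences: if $(\gamma_k)\subset\Gamma$ leaves every compact subset of $PU(1,n)$, then after passing to a subsequence $\gamma_k$ converges, uniformly on compact subsets of $\Bbb{P}^n_\Bbb{C}\setminus q^\bot$, to a constant pseudo-projective map with value a point $a$ of $\Lambda_{CG}(\Gamma)$, where the pole $q^\bot$ is the tangent hyperplane at a second point $q\in\Lambda_{CG}(\Gamma)$. I would extract this from the $KAK$ (Cartan) decomposition of the lifts to $U(1,n)$: since $PU(1,n)$ has real rank one, the $A$-factor is one-dimensional, $a_t=\mathrm{diag}(e^t,1,\dots,1,e^{-t})$, so as $t\to\infty$ the dominant singular direction is a single line, giving the image point $a$, while all other directions collapse onto a hyperplane, the pole $q^\bot$; both $a$ and $q$ are isotropic, so $\{a\}\subset a^\bot$ and the pole is the tangent hyperplane at $q$, with $a,q\in\Lambda_{CG}(\Gamma)$. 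Granting this, fix $x\in\Omega_{Kul}(\Gamma)$; by the description of $\Lambda_{Kul}(\Gamma)$ the point $x$ lies in no tangent hyperplane at a point of $\Lambda_{CG}(\Gamma)$, so in particular $x\notin q^\bot$ for every pole arising this way. Hence each subsequence converges uniformly on a fixed neighborhood of $x$, and a diagonal argument gives equicontinuity of $\{\gamma:\gamma\in\Gamma\}$ at $x$.

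For the reverse inclusion I would show that equicontinuity fails at every point of $\Lambda_{Kul}(\Gamma)$. Given $x\in p^\bot$ with $p\in\Lambda_{CG}(\Gamma)$, I would produce a divergent sequence whose pole is exactly $p^\bot$, using a loxodromic element with repelling fixed point $p$, or a limit of such when $p$ is only approximated by loxodromic fixed points; the latter is legitimate because $\mathrm{Eq}(\Gamma)$ is open and $p\mapsto p^\bot$ is continuous, so the non-equicontinuity locus is closed and it suffices to treat a dense set of $p$'s. Points of a neighborhood of $x$ lying off $p^\bot$ are driven to the image point, while points on $p^\bot$ remain in $p^\bot$; no subsequence can then converge uniformly near $x$, so $x\notin\mathrm{Eq}(\Gamma)$. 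Together with the previous step this yields $\mathrm{Eq}(\Gamma)=\Omega_{Kul}(\Gamma)$, and since an equicontinuous family of homeomorphisms acts properly discontinuously on its equicontinuity set, this re-proves proper discontinuity on $\Omega_{Kul}(\Gamma)$.

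It remains to prove maximality, which I expect to be the main obstacle. I would argue contrapositively: if $U$ is open and meets $\Lambda_{Kul}(\Gamma)$, the action on $U$ is not properly discontinuous. Choosing $x\in U\cap p^\bot$ and a compact neighborhood $K\subseteq U$ of $x$, the aim is a divergent sequence $(\gamma_k)$ with $\gamma_k K\cap K\ne\emptyset$ for infinitely many $k$, contradicting proper discontinuity. The delicate case is when $x$ is a point of the tangent hyperplane far from $\partial\Bbb{H}^n_\Bbb{C}$, where the image point of a single sequence need not return to $K$; here non-elementarity is essential, since it guarantees that loxodromic fixed points are dense in $\Lambda_{CG}(\Gamma)$ and that $\Lambda_{CG}(\Gamma)$ is perfect, giving a rich supply of elements whose poles pass through $x$ and whose images can be arranged to cluster back onto $K$. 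Concretely I would tie this to Kulkarni's decomposition, showing $\Lambda_{Kul}(\Gamma)\subseteq L_1(\Gamma)\cup L_2(\Gamma)$ and invoking that points of $L_1(\Gamma)\cup L_2(\Gamma)$ are accumulation points of orbits of points, respectively of compacta, which by definition cannot lie in a region of proper discontinuity. Establishing this recurrence uniformly over all of $\bigcup_{p\in\Lambda_{CG}(\Gamma)}p^\bot$, rather than only over the boundary sphere, is the step requiring the most care.
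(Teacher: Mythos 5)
Your reduction of the theorem to two items --- the inclusion $\bigcup_{p\in\Lambda_{CG}(\Gamma)}p^\bot\subseteq\Lambda_{Kul}(\Gamma)$ together with the failure of proper discontinuity on any larger open set --- is the right skeleton, and the first half of your argument (the $KAK$ analysis giving $\Bbb{P}^n_\Bbb{C}\setminus Eq(\Gamma)=\bigcup_p p^\bot$, hence $\Lambda_{Kul}(\Gamma)\subseteq\bigcup_p p^\bot$ via Kulkarni's general properties) is exactly the previously known input the paper also starts from. But the step you yourself flag as ``requiring the most care'' is precisely the theorem's actual content, and your proposal does not contain the idea that closes it. The difficulty is this: for a sequence $(\gamma_m)$ with repelling point $p$ and attracting point $q$, the lambda lemma says that sequences converging to a point $w_0\in p^\bot\setminus\{p\}$ are pushed onto the single line $\phi(\overleftrightarrow{p,w_0})\subset q^\bot$ through $q$, where $\phi$ is some isometry $p^\bot(p)\to q^\bot(q)$ determined by the limits of the compact factors in the Cartan decompositions. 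To conclude $w_0\in L_2(\Gamma)$ (or to produce the recurrence $\gamma_mK\cap K\neq\emptyset$) you must exhibit a sequence whose output line actually passes through $w_0$, i.e.\ you must control $\phi$, not merely the points $p$ and $q$. Density of loxodromic fixed points and perfectness of $\Lambda_{CG}(\Gamma)$ give you a rich supply of attracting/repelling pairs, but say nothing about the ``rotational part'' $\phi$; your phrase ``whose images can be arranged to cluster back onto $K$'' is an assertion of exactly what needs to be proved. (Indeed, even for a single loxodromic $\gamma$ with repelling point $p$, the forward orbit of $w_0\in p^\bot$ accumulates only on $p^\bot\cap q^\bot$, not on $w_0$; one must pass to carefully chosen sequences to recover $w_0$.)

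The paper's resolution is an algebraic device absent from your proposal: it defines the set $\Gamma(p,p)$ of all isometries $\phi$ of $p^\bot(p)$ arising as above from sequences in $\Gamma$ doubly tending to $p$, shows it is a nonempty \emph{closed sub-monoid} of the compact group $Isom(p,p)\cong PU(n-1)$, and then invokes the fact that a closed sub-semigroup of a compact group is a group. Hence $Id_{p^\bot(p)}\in\Gamma(p,p)$, and a Cartan sequence realizing the identity sends any neighborhood of $w_0\in p^\bot$ onto a set accumulating on the whole line $\overleftrightarrow{w_0,p}\ni w_0$; this single statement yields both $w_0\in L_2(\Gamma)\subseteq\Lambda_{Kul}(\Gamma)$ and the failure of proper discontinuity on any open set meeting $p^\bot$. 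Without this (or an equivalent control of the unitary parts), your argument establishes only that $\Lambda_{Kul}(\Gamma)$ contains \emph{some} lines inside each tangent hyperplane, not the whole of $\bigcup_p p^\bot$. A secondary omission: the first identity in the statement is claimed for all discrete subgroups, and your strategy leans on non-elementarity throughout; the elementary cases (one or two limit points) need the separate direct computations the paper carries out, since there maximality genuinely fails (cf.\ the example in Section~\ref{s:example}) even though the formula for $\Lambda_{Kul}(\Gamma)$ still holds.
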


This result was proven (essentially) by J. P. Navarrete in \cite{jp} for n=2. In \cite{CS}, J. Seade and one of the authors studied the higher dimensional case. They proved that, for $\Gamma$  as above, the region of equicontinuity is as stated in the theorem, and they asked  whether the full statement of the theorem hold. In this article we answer the question affirmatively.\\

In the 2-dimensional scenario, the proof of Theorem \ref{t:principal} is a key step  to show that in the ``generic case" there is a well defined  notion of limit set, see \cite{CNS},  and we expect that  in the higher dimensional setting we get a similar result.  In a series of  forthcoming articles, we will clarify this assertion, see \cite{ACCM, ucan}.\\

The paper is organized as follows: in Section \ref{s:recall}, we review some general facts and introduce the notation used along the 
text. In Section   \ref{s:example}, we provide an example which depicts the problem of finding a maximum region where a group acts
properly and discontinuously.  In Section   \ref{s:lambda}, we provide a  lemma   that helps us to describe the dynamic of compact 
sets. In Section \ref{s:control}, we construct a group of transformations, called the control group,  which helps us to describe the 
induced dynamic in the Grassmanian $Gr(1,n)$. In Section \ref{s:main}, we provide  a  proof of the main theorem. Finally, in Section 
\ref{s:cartan}, we provide a relation between the control group and the Cartan angular invariant of triplets contained in the 
Chen-Greenberg limit set.\\
\section{Preliminaries}  \label{s:recall}

\subsection{Projective Geometry}
The complex projective space $\mathbb {P}^n_{\mathbb {C}}$
is defined as:
$$ \mathbb {P}^{n}_{\mathbb {C}}=(\mathbb {C}^{n+1}\setminus \{0\})/\Bbb{C}^* \,,$$
where $\Bbb{C}^*$ acts by  the usual scalar multiplication.
  This is   a  compact connected  complex $n$-dimensional
manifold  equipped with the Fubini-Study  metric $d_n$.

If $[\mbox{ }]:\mathbb {C}^{n+1}\setminus\{0\}\rightarrow
\mathbb {P}^{n}_{\mathbb {C}}$ is the quotient map, then a
non-empty set  $H\subset \mathbb {P}^n_{\mathbb {C}}$ is said to
be a projective subspace of dimension $k$  if there is a  $\mathbb {C}$-linear
subspace  $\widetilde H$ of dimension $k+1$ such that $[\widetilde
H\setminus \{0\}]=H$. The  projective subspaces of dimension
$(n-1)$ are called hyperplanes and the  complex projective subspaces of dimension 1
 are called   lines.  In this article,  $e_1,\ldots, e_{n+1}$ will denote the standard basis for $\Bbb{C}^{n+1}$.\\

Given a set of points $P$   in $\mathbb{P}^{n}_{\mathbb{C}}$, we
define:
$$Span( P) =\bigcap\{l\subset \mathbb{P}^n_{\mathbb{C}}\mid l \textrm{ is a projective subspace containing } P \}.$$ Clearly,  
$Span( P) $ is a projective subspace of $\mathbb{P}^{n}_{\mathbb{C}}$.  If $p,q$ are distinct points then  $Span(\{p,q\})$ is the 
unique complex line passing through them. In this case, we will write $\overleftrightarrow{p,q}$ instead of $Span(\{p,q\})$.

\subsection{ Projective and Pseudo-projective Transformations }
 It is clear  that every linear isomorphism of $\Bbb{C}^{n+1}$ defines a
holomorphic automorphism of $\mathbb{P}^{n}_{\mathbb{C}}$. Also, it is well-known that every holomorphic automorphism of $\mathbb{P}^{n}_{\mathbb{C}}$
arises in this way. The group of projective automorphisms of $\mathbb{P}^{n}_{\mathbb{C}}$ is defined:
$$PSL(n+1, \mathbb {C}) \,:=\, GL({n+1}, \Bbb{C})/\Bbb{C}^*,$$
where $\Bbb{C}^* $ acts by the usual scalar multiplication. Then $PSL(n+1, \mathbb {C})$ is a Lie group whose
elements are called projective transformations.
We denote   by $[[\mbox{  }]]: GL(n+1, \mathbb
{C})\rightarrow PSL(n+1, \mathbb {C})$    the quotient map. Given     $ \gamma  \in PSL(n+1, \mathbb
{C})$,  we  say that  $\widetilde \gamma  \in GL(n+1, \mathbb {C})$ is a  lift of $ \gamma  $ if 
$[[\widetilde  \gamma  ]]= \gamma  $.\\

\subsection{Complex Hyperbolic Groups} 
In the rest of the paper, we will be interested in studying those subgroups of $PSL(n+1,\Bbb{C})$ preserving  the unitary
complex ball.  We start  by considering the following Hermitian matrix: 
\[
H=
\left (
\begin{array}{lll}
  &   & 1\\
  & I_{n-1} &\\
1&   &
\end{array}
\right ),
\]
where $I_n$ denotes the identity matrix of size $(n-1)\times (n-1)$. We will set
\[
U(1,n)=\{g\in GL(n+1,\Bbb{C}): gHg^*=H\}.
\]
and $\langle,\rangle:\Bbb{C}^{n+1}\rightarrow \Bbb{C}$ the hermitian form induced by $H$. Clearly, $\langle,\rangle$ has signature 
$(1,n)$, $U(1,n)$ is the the group preserving $\langle,\rangle$, see \cite{neretin}. And the corresponding projectivization $PU(1,n)$ 
preserves the unitary complex ball:
\[
\Bbb{H}^n_\Bbb{C}=\{[w]\in \Bbb{P}^n_{\Bbb{C}}\mid \langle w,w\rangle <0\}
\]
 Given a group $\Gamma\subset PU(1,n)$, we define the following notion of limit set due to Chen and Greenberg, see \cite{CG}.
 
\begin{definition}
Let $\Gamma\subset PU(1,n)$, then $\Lambda_{CG}(\Gamma)$ is to be defined  as as the set of accumulation
points in $\partial \Bbb{H}^n_\Bbb{C}$ of the orbit of any point in $\Bbb{H}^n_\Bbb{C}$.
\end{definition}

 As in the Fuchsian groups case, it is not hard to show that $\Lambda_{CG}(\Gamma)$ does not depend on the choice of $x$ and
 $\Lambda_{CG}(\Gamma)$ has either 1,2 or infinite points.  A group is said to be non-elementary if  $\Lambda_{CG}(\Gamma)$ has 
 infinite points.\\
 
 In the following, given a projective subspace $P\subset \Bbb{P}^n_\Bbb{C}$ we will define 
\[
P^\bot=[\{w\in \Bbb{C}^{n+1}\mid \langle w,v\rangle=0 \textrm{ for all } v\in [P]^{-1} \}\setminus\{0\}].
\]
Also, we will say that $P$ is a Lagrangian plane if there is  a $\Bbb{R}$-vectorial subspace $R\subset \Bbb{C}^{n+1}$ of dimension 3, 
such that $[R\setminus \{0\}]=P$ and $\langle w,v\rangle\in \Bbb{R}$ for each $v,w\in R$. A non elementary  group 
$\Gamma\subset PU(1,n)$ is said to be $\Bbb{C}$-Fuchsian (resp. $\Bbb{R}$-Fuchsian) if there is a complex line 
(resp.  a Lagrangian plane) $\ell$ invariant under $\Gamma$.\\

Let us define the Cartan angular invariant. Given $([x],[y],[z])\in \partial (\Bbb{H}^n_\Bbb{C})^3$  a triplet of  distinct points, 
the Cartan angular invariant of this triplet is defined  as
\[
\Bbb{A}([x],[y],[z])=arg(-\langle x,y\rangle \langle y,z\rangle\langle z,x\rangle).
\] 
It can be shown, see \cite{goldman}, that  $2\vert \Bbb{A}([x],[y],[z])\vert \leq \pi$ and 
\begin{enumerate}
\item we have $2\Bbb{A}([x],[y],[z])=\pm \pi$ if and only if the points $[x],[y],[z]$ lie in a complex line, 

\item we have $\Bbb{A}([x],[y],[z])=0$ if and only if the points $[x],[y],[z]$ lie in a Lagrangian plane.
\end{enumerate}
We also, have the following result:
\begin{theorem}
Given $([x_1],[x_2],[x_3]),([\tilde{ x}_1],[\tilde{x}_2],[\tilde{x}_3])\in \partial (\Bbb{H}^n_\Bbb{C})^3$  be  triplets of  distinct points we have $\Bbb{A}([\tilde{ x}_1],[\tilde{x}_2],[\tilde{x}_3])=\Bbb{A}([x_1],[x_2],[x_3])$ if and only if there is a transformation $\gamma\in PU(1,n)$ such that $\gamma([x_i])=[\tilde{x}_i]$.
\end{theorem}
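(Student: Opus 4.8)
The plan is to prove the two implications separately, the forward one being a direct computation and the converse requiring a normalization argument. First I would record that $\Bbb{A}$ is well defined and $PU(1,n)$-invariant. If $x_i$ is a lift of $[x_i]$ and $\gamma\in PU(1,n)$ has a lift $\widetilde\gamma\in U(1,n)$, then $\widetilde\gamma x_i$ is a lift of $\gamma[x_i]$ and $\langle\widetilde\gamma x_i,\widetilde\gamma x_j\rangle=\langle x_i,x_j\rangle$. Any two lifts differ by scalars $\lambda_i\in\Bbb{C}^*$, and replacing $x_i$ by $\lambda_i x_i$ multiplies the triple product $\langle x_1,x_2\rangle\langle x_2,x_3\rangle\langle x_3,x_1\rangle$ by $|\lambda_1|^2|\lambda_2|^2|\lambda_3|^2>0$, hence leaves its argument unchanged. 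This simultaneously shows $\Bbb{A}$ does not depend on the chosen lifts and that $\gamma\in PU(1,n)$ with $\gamma([x_i])=[\widetilde x_i]$ forces $\Bbb{A}([\widetilde x_1],[\widetilde x_2],[\widetilde x_3])=\Bbb{A}([x_1],[x_2],[x_3])$, which is the ``if'' direction.

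For the converse I would first reduce to a normal form. Since two distinct null lines can never be orthogonal (a $2$-dimensional totally isotropic subspace would exceed the Witt index $1$ of a signature $(1,n)$ form), the span of any two distinct boundary points carries a form of signature $(1,1)$, its orthogonal complement being positive definite of dimension $n-1$. By Witt's extension theorem I can therefore find $g\in U(1,n)$ sending an appropriately scaled pair $(x_1,x_2)$ to the hyperbolic pair $(e_1,e_{n+1})$; in particular $PU(1,n)$ is transitive on ordered pairs of distinct boundary points. Applying this to both triples, and using the invariance from the first step, it suffices to treat the case $[x_1]=[\widetilde x_1]=[e_1]$ and $[x_2]=[\widetilde x_2]=[e_{n+1}]$ and to produce an element of the stabilizer of this pair carrying $[x_3]$ to $[\widetilde x_3]$.

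Here I would pass to Heisenberg coordinates: writing $[e_1]=\infty$ and $[e_{n+1}]=o$, every other boundary point is $[(-\tfrac12|\zeta|^2+iv,\zeta,1)]$ for unique $(\zeta,v)\in\Bbb{C}^{n-1}\times\Bbb{R}$. A short computation gives that the triple product for $(\infty,o,(\zeta,v))$ equals $-\tfrac12|\zeta|^2+iv$ up to a positive factor, so $\Bbb{A}$ is a monotone function of the ratio $v/|\zeta|^2$ (and equals $\pm\tfrac\pi2$ exactly when $\zeta=0$, recovering properties (1)--(2)). The stabilizer of $(\infty,o)$ contains the unitary rotations $\mathrm{diag}(1,U,1)$, $U\in U(n-1)$, acting by $(\zeta,v)\mapsto(U\zeta,v)$, and the dilations $\mathrm{diag}(\lambda^{-1},I,\lambda)$, $\lambda>0$, acting by $(\zeta,v)\mapsto(\lambda^{-1}\zeta,\lambda^{-2}v)$; both preserve $v/|\zeta|^2$. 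Since $U(n-1)$ is transitive on each sphere of $\Bbb{C}^{n-1}$ and the dilations scale $|\zeta|$ arbitrarily while rescaling $v$ so as to keep the ratio fixed, this subgroup acts transitively on each level set $\{v/|\zeta|^2=c,\ \zeta\neq0\}$ and, for the degenerate value, on each of $\{\zeta=0,\ v>0\}$ and $\{\zeta=0,\ v<0\}$. Equality $\Bbb{A}([x_3])=\Bbb{A}([\widetilde x_3])$ places the two third points on the same such level set, so the required stabilizer element exists, completing the argument.

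The main obstacle is the bookkeeping in the normalization step: I must be careful that the lifts are scaled so that $(x_1,x_2)$ is genuinely sent to $(e_1,e_{n+1})$ and that the induced action of the stabilizer on the Heisenberg coordinate $(\zeta,v)$ is exactly as claimed, including the quadratic scaling of the central coordinate $v$. Getting the sign conventions in the Hermitian form consistent with the stated behavior of $\Bbb{A}$ (the factor $-\tfrac12|\zeta|^2+iv$ versus its conjugate, and hence the sign of $\Bbb{A}$) is the delicate point. Once the stabilizer action is pinned down, the statement that $\Bbb{A}$ separates exactly the orbits is a direct transitivity check as above.
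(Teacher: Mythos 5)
Your argument is correct: the invariance under $PU(1,n)$ and under rescaling of lifts gives the easy direction, and the normalization of $([x_1],[x_2])$ to $([e_1],[e_{n+1}])$ via Witt extension followed by the transitivity of the pair-stabilizer (rotations $U\in U(n-1)$ plus real dilations) on the level sets of $v/|\zeta|^2$ gives the converse. The paper itself offers no proof of this statement --- it is quoted from Goldman's book --- and your normalization-to-Heisenberg-coordinates argument is essentially the standard proof given there, so there is nothing to reconcile beyond the conjugation-convention bookkeeping you already flagged.
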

The following result will be useful, see \cite{knapp, neretin}.

\begin{theorem}[Cartan's  Decomposition]
For every $\gamma\in PU(1,n)$ there are elements $k_1,k_2\in K:=PU(n+1)\cap PU(1,n)$ and  a unique   $\mu(\gamma)\in PU(1,n)$,  such that $\gamma=k_1\mu(\gamma)k_2$ and $\mu(\gamma)$ has a lift in $SL(n+1,\Bbb{C})$ given by
\[
\left (
\begin{array}{lllll}
e^{\lambda(\gamma)}\\
                 & 1\\
                 &          & \ddots\\
                 &          &            &1\\
                 &          &            &     & e^{-\lambda(\gamma)}
\end{array}
\right ),
\]
where $\lambda(\gamma) \geq 0$.
\end{theorem}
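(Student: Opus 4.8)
The plan is to give a self-contained geometric proof, using that $PU(1,n)$ acts transitively by isometries on $\mathbb{H}^n_{\mathbb{C}}$ with $K$ as the stabilizer of a distinguished point, together with an explicit identification of the displayed diagonal group as translation along a geodesic. First I would fix the base point $o=[e_1-e_{n+1}]$. A direct computation gives $\langle e_1-e_{n+1},e_1-e_{n+1}\rangle=-2<0$, so $o\in\mathbb{H}^n_{\mathbb{C}}$, and I would check $K=\mathrm{Stab}(o)$: a representative $g$ is simultaneously $H$-unitary and standard-unitary iff $g^*=g^{-1}$ and $gHg^*=H$, i.e. iff $g$ commutes with $H$; such $g$ preserves the $(-1)$-eigenline $\mathbb{C}(e_1-e_{n+1})$ of $H$ and hence fixes $o$, so $K\subseteq\mathrm{Stab}(o)$, while equality follows since $\mathrm{Stab}(o)$ is compact and $K=PU(n+1)\cap PU(1,n)$ is a maximal compact subgroup. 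The geometric input I need is that $K$ acts transitively on the unit tangent sphere at $o$ (its isotropy representation is the standard action of $U(n)$ on $\mathbb{C}^n$), hence transitively on every metric sphere $\{x:d(o,x)=r\}$, because isometries fixing $o$ permute the geodesics emanating from $o$ preserving arclength.

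Next I would identify the displayed one-parameter family $\mu_t:=\mathrm{diag}(e^{t},1,\dots,1,e^{-t})$ as the geodesic flow through $o$ associated with $\mathfrak{a}=\mathbb{R}\,\mathrm{diag}(1,0,\dots,0,-1)$. Using the complex-hyperbolic distance formula $\cosh^2\tfrac12 d([v],[w])=\frac{|\langle v,w\rangle|^2}{\langle v,v\rangle\langle w,w\rangle}$ with $w=e^{t}e_1-e^{-t}e_{n+1}$, I compute $\langle w,w\rangle=-2$ and $\langle e_1-e_{n+1},w\rangle=-2\cosh t$, whence $d(o,\mu_t(o))=2|t|$. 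In particular $t\mapsto d(o,\mu_t(o))$ is continuous, strictly increasing on $[0,\infty)$, and surjects onto $[0,\infty)$.

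The decomposition is then assembled as follows. Given $\gamma$, let $t_0\geq 0$ be the unique solution of $2t_0=d(o,\gamma(o))$. The points $\mu_{t_0}(o)$ and $\gamma(o)$ both lie on the sphere of radius $2t_0$ about $o$, so by transitivity there is $k_1\in K$ with $k_1(\mu_{t_0}(o))=\gamma(o)$. Then $k_2:=\mu_{t_0}^{-1}k_1^{-1}\gamma$ fixes $o$, hence lies in $K$, and rearranging yields $\gamma=k_1\mu_{t_0}k_2$ with $\mu(\gamma)=\mu_{t_0}$ and $\lambda(\gamma)=t_0\geq 0$. Uniqueness of $\mu(\gamma)$ follows because $\lambda(\gamma)=\tfrac12 d(o,\gamma(o))$ is an isometry invariant of $\gamma$ and the diagonal matrix is determined by $\lambda$; note $k_1,k_2$ themselves are not unique.

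I expect the main obstacle to be the geometric input of the first step, namely proving cleanly that $K$ is exactly $\mathrm{Stab}(o)$ and that it acts transitively on metric spheres. If one prefers to bypass the isotropy-representation argument, the alternative is the purely Lie-theoretic route: equip $\mathfrak{u}(1,n)$ with the Cartan involution $\theta(X)=-X^*$, verify that $\mathrm{diag}(1,0,\dots,0,-1)$ lies in the $(-1)$-eigenspace $\mathfrak{p}$ and spans a maximal abelian subspace (so the real rank is $1$), and invoke the abstract $KAK$-decomposition, with the Weyl group $\{\pm 1\}$ on $\mathfrak{a}$ accounting for the normalization $\lambda\geq 0$. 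The delicate point there is checking maximality of $\mathfrak{a}$ and matching $K=G^\theta$ with the stated $PU(n+1)\cap PU(1,n)$.
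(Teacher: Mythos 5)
Your argument is correct, but note that the paper does not prove this statement at all: it is quoted as a known result with a pointer to Knapp and Neretin, where it appears as the Lie-theoretic $KAK$ (polar) decomposition, proved via a Cartan involution, a choice of maximal abelian subspace $\mathfrak{a}\subset\mathfrak{p}$, and the Weyl group acting on $\mathfrak{a}$. What you supply instead is the standard \emph{geometric} proof of $KAK$ in real rank one: identify $K$ with the stabilizer of a base point $o\in\Bbb{H}^n_{\Bbb{C}}$, check that the diagonal one-parameter group moves $o$ along a geodesic with $d(o,\mu_t(o))$ strictly increasing onto $[0,\infty)$, and use transitivity of $K$ on metric spheres about $o$ to conjugate $\gamma(o)$ onto that geodesic; uniqueness of $\mu(\gamma)$ then falls out of the isometry invariant $\lambda(\gamma)=\tfrac12 d(o,\gamma(o))$. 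All the computations you display ($\langle e_1-e_{n+1},e_1-e_{n+1}\rangle=-2$, $\langle w,w\rangle=-2$, $\langle e_1-e_{n+1},w\rangle=-2\cosh t$, $\det\mu_t=1$) are right, and the two facts you flag as the remaining work are indeed the only gaps: (i) $\mathrm{Stab}(o)\subseteq K$, which you can get without invoking maximal compactness by noting that a lift $g$ fixing $o$ preserves the line $L=\Bbb{C}(e_1-e_{n+1})$ and its $H$-orthogonal complement $L^\perp$, and that the positive-definite form $(-H|_L)\oplus(H|_{L^\perp})$ coincides with the standard Hermitian form, forcing $g\in U(n+1)$; and (ii) transitivity of the isotropy representation on the unit tangent sphere, which follows from the same splitting since the stabilizer acts on $T_o\Bbb{H}^n_{\Bbb{C}}\cong L^\perp\cong\Bbb{C}^n$ as $U(n)$ twisted by a unit scalar, and $U(n)$ is transitive on $S^{2n-1}$. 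What your route buys is a self-contained proof needing no structure theory beyond basic Riemannian geometry of Hadamard manifolds; what the cited Lie-theoretic route buys is uniformity (it works for any semisimple group, not just rank one) and an intrinsic explanation of the normalization $\lambda\geq 0$ via the Weyl group $\{\pm1\}$.
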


\subsection{ Pseudo-projective Transformation}
 The space of linear transformations from $\Bbb{C}^{n+1}$ to $\Bbb{C}^{n+1}$, denoted by  $M(n+1,\Bbb{C})$,   is a linear complex space  of dimension
 $(n+1)^2$, where $GL(n+1,\Bbb{C})$ is an open dense set in $ M(n+1,\Bbb{C})$. Then
$PSL(n+1,\Bbb{C})$ is an open dense set in  $QP(n+1,\Bbb{C})= (M(n+1,\Bbb{C})\setminus \{0 \})/\Bbb{C}^*$ called the space of pseudo-projective   maps. Let $\widetilde M:\mathbb {C}^{n+1}\rightarrow \mathbb {C}^{n+1}$ be a non-zero
 linear transformation. Let   $Ker(\widetilde M)$ be its kernel and  $Ker([[\widetilde M]])$
 denote the respective projectivization,
 then  $\widetilde M$ induces a well defined map 
  $[[\widetilde M]]:\mathbb {P}^{n}_\mathbb {C}\setminus Ker([[\widetilde M]]) \rightarrow
 \mathbb {P}^{n}_\mathbb {C}$   by:
 $$
[[\widetilde M]]([v])=[\widetilde M(v)]\,.
 $$
 The following  result provides a link between pointwise convergence in $QP(n+1,\Bbb{C})$  and uniform convergence in a projective space.

\begin{proposition} [See \cite{CS}]  \label{p:completes}
Let  $( \gamma_m)_{m\in \mathbb {N}}\subset PSL(n+1,\mathbb {C})$
be a sequence of distinct elements, then 
\begin{enumerate}
\item  There is a subsequence $( \tau_m)_{m\in \mathbb {N}}\subset ( \gamma_m)_{m\in \mathbb {N}}$ and  $\tau_0\in  M(n+1,\Bbb{C})\setminus \{0\}$ such that $\tau_m\xymatrix{ \ar[r]_{m \rightarrow
\infty}&}  \tau_0  $ as points in $QP(n+1,\Bbb{C})$.

\item If $(\tau_m)_{m\in \mathbb {N}}$ is the sequence  given by the previous part of this lemma, then 
$\tau_m  \xymatrix{ \ar[r]_{m \rightarrow
\infty}&}  \tau_0  $, as   functions,   uniformly on compact sets of 
 $\mathbb
{P}^n_\mathbb {C}\setminus Ker( \tau_0  )$.

\end{enumerate}
\end{proposition}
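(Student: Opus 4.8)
The plan is to exploit the fact that $QP(n+1,\Bbb{C})$ is itself a projective space, hence compact, so that the only genuine content lies in part (2), where one must identify \emph{exactly} the set on which convergence is uniform.

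For part (1) I would choose for each $m$ a lift $\widetilde\gamma_m\in SL(n+1,\Bbb{C})$ and normalize it to $\widehat\gamma_m=\widetilde\gamma_m/\Vert\widetilde\gamma_m\Vert$, where $\Vert\cdot\Vert$ is the (Frobenius or operator) norm on $M(n+1,\Bbb{C})$; since all norms on this finite-dimensional space are equivalent, the choice is immaterial. Each $\widehat\gamma_m$ then lies on the unit sphere of $M(n+1,\Bbb{C})\cong\Bbb{C}^{(n+1)^2}$, which is compact, so a subsequence converges to some $\tau_0$ with $\Vert\tau_0\Vert=1$; in particular $\tau_0\in M(n+1,\Bbb{C})\setminus\{0\}$. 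Because the quotient map $M(n+1,\Bbb{C})\setminus\{0\}\to QP(n+1,\Bbb{C})$ is continuous, this gives the corresponding subsequence $\tau_m\to\tau_0$ as points of $QP(n+1,\Bbb{C})$, which is the first assertion.

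For part (2) I would retain the normalized lifts, now satisfying $\widehat\tau_m\to\tau_0$ in the operator norm. Let $K\subset \Bbb{P}^n_\Bbb{C}\setminus Ker(\tau_0)$ be compact and let $\widetilde K\subset S^{2n+1}$ be the compact set of unit-norm representatives of the points of $K$. Since $K$ avoids the projectivized kernel $Ker(\tau_0)$, the set $\widetilde K$ avoids the linear kernel of $\tau_0$, so the continuous, strictly positive function $v\mapsto\Vert\tau_0 v\Vert$ attains a minimum $\delta>0$ on $\widetilde K$. For $m$ large enough that $\Vert\widehat\tau_m-\tau_0\Vert<\delta/2$ one gets, uniformly on $\widetilde K$, both $\Vert\widehat\tau_m v\Vert\geq\delta/2$ and $\Vert\widehat\tau_m v-\tau_0 v\Vert\to 0$. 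I would then use that $w\mapsto[w]$ is Lipschitz for the Fubini--Study metric $d_n$ on any region where $\Vert w\Vert$ is bounded below, turning these two estimates into $d_n(\tau_m[v],[[\tau_0]][v])\to 0$ uniformly for $[v]\in K$.

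The single delicate point — and the only place where uniform convergence could fail — is the uniform lower bound $\Vert\tau_0 v\Vert\geq\delta$ on $\widetilde K$. It is exactly here that the compactness of $K$ and the hypothesis $K\cap Ker(\tau_0)=\emptyset$ are used: as $[v]$ tends to $Ker(\tau_0)$ one has $\tau_0 v\to 0$, so both the lower bound and the Lipschitz constant for projectivization degenerate, and convergence genuinely breaks down on the kernel. This is precisely why the conclusion is stated on $\Bbb{P}^n_\Bbb{C}\setminus Ker(\tau_0)$ and not on all of $\Bbb{P}^n_\Bbb{C}$.
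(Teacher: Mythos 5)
Your proof is correct. The paper does not actually prove this proposition --- it is quoted from \cite{CS} --- but your argument (normalizing lifts to the unit sphere of $M(n+1,\Bbb{C})$, extracting a convergent subsequence by compactness, and then using the compactness of $K$ and $K\cap Ker(\tau_0)=\emptyset$ to get the uniform lower bound $\Vert\tau_0 v\Vert\geq\delta$ that keeps the projectivization Lipschitz) is essentially the standard proof given in that reference, including the correct identification of the kernel as the exact locus where uniformity breaks down.
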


\subsection{The Grassmanians} 
Let $0\leq k< n$,  we define the Grassmanian   $Gr(k,n)$ as the space  of all  $k$-dimensional projective subspaces  of $\mathbb {P}^{n}_{\Bbb{C}}$ endowed with the Hausdorff topology. One has  that $Gr(k,n)$ is a  compact, connected complex manifold of dimension $k(n-k)$. 
A method to realize the Grassmannian  $Gr(k,n)$  as a subvariety of the projective space of the $(k+1)$-th exterior power of $ \Bbb{C}^{n+1}$, in symbols $P(\bigwedge^{k+1} \Bbb{C}^{n+1})$ is done by the so called Pl\"ucker embedding which is given by:
\[
\begin{array}{l}
\iota:Gr(k,n)\rightarrow P(\bigwedge^{k+1} \Bbb{C}^{n+1})\\
\iota(V)\mapsto  [v_1\wedge \cdots \wedge v_{k+1}]
\end{array}
\] 
where $Span(\{ v_1, \cdots,  v_{k+1}\})= V$. We can make $PSL(n+1,\Bbb{C})$ act on $Gr(k,n)$  and $P(\bigwedge^{k+1} \Bbb{C}^{n+1})$, which 
makes $\iota$  a  $PSL(n+1,\Bbb{C})$-equivalent  embedding.\\

\subsection{Kulkarni Limit Set}
When we look at the action of a group acting on a general topological space, in general there are no natural notions of limit set. Hence, we study the notion introduced by Kulkarni. 

\begin{definition}[see   \cite{kulkarni} ] \label{d:lim}
 Let $\Gamma\subset   PSL(n+1,\mathbb{C})$ be a subgroup. We  define
 
\begin{enumerate}
\item  The set $L_0(\Gamma)$ as the closure of the set of points in $\Bbb{P}^n_{\Bbb{C}}$ with infinite isotropy
group.

\item The set $L_1(\Gamma)$  as the closure of the set of cluster points of $\{g(z) : g \in \Gamma\}$
where $z$ runs over $\Bbb{P}^n_{\Bbb{C}}\setminus L_0(\Gamma)$.

\item The set 
$\Lambda(\Gamma)=L_0(\Gamma)\cup L_1(\Gamma)$.

\item  The set $L_2(\Gamma)$ as  the closure of cluster  points of $\Gamma
K$  where $K$ runs  over all  the compact sets in
$\mathbb{P}^n_{\mathbb{C}}\setminus \Lambda(\Gamma)$.

\item The  \textit{ Kulkarni's limit set } of $\Gamma$  as:  $$\Lambda_{Kul} (\Gamma) = \Lambda(\Gamma) \cup L_2(\Gamma).$$ 
\item The \textit{ Kulkarni's discontinuity region} of $\Gamma$  as:
$$\Omega_{Kul}(\Gamma) = \mathbb{P}^n_{\mathbb{C}}\setminus
\Lambda_{Kul}(\Gamma).$$
\end{enumerate}

\end{definition}

The Kulkarni's limit set has the following properties,  for a more  detailed  discussion on this topic in the 2 dimensional setting
see \cite{CNS}. 

\begin{proposition}[See \cite{CNS, CS, kulkarni}] \label{p:pkg}
Let    $\Gamma$  be a complex  Kleinian group. Then:

\begin{enumerate}

\item The sets\label{i:pk2}
$\Lambda_{Kul}(\Gamma),\,\Lambda(\Gamma),\,L_2(\Gamma)$
are  $\Gamma$-invariant closed sets. 

\item \label{i:pk3} The group $\Gamma$ acts properly 
  discontinuously on  $\Omega_{Kul}(\Gamma)$. 

\item \label{i:pk4} Let $\mathcal{C}\subset\mathbb{P}^n_{\mathbb{C}}$ be
a closed $\Gamma$-invariant set such that  for every compact set $K\subset
\mathbb{P}^n_{\mathbb{C}}\setminus \mathcal{C}$, the set of cluster points
of  $\Gamma K$ is contained in $\Lambda(\Gamma)\cap \mathcal{C}$, then
$\Lambda_{Kul}(\Gamma)\subset \mathcal{C}$.

\item The equicontinuity set of $\Gamma$ is contained in $\Omega_{Kul}(\Gamma)$.
\end{enumerate}
\end{proposition}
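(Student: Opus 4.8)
The plan is to establish the four items in sequence, since each relies on the previous ones. For item (1), the three sets are closed because $L_0,L_1,L_2$ are defined as closures, and then $\Lambda=L_0\cup L_1$ and $\Lambda_{Kul}=\Lambda\cup L_2$ are finite unions of closed sets, so $\Omega_{Kul}$ is open. For $\Gamma$-invariance I would check invariance of each generating set before closure and then use that every $g\in\Gamma$ acts as a homeomorphism, hence preserves closures. For $L_0$, a point $x$ with infinite isotropy $\Gamma_x$ is carried by $g$ to $g(x)$, whose isotropy $g\Gamma_x g^{-1}$ is again infinite. For $L_1$ and $L_2$ the mechanism is that cluster points push forward: if $w=\lim_k g_k(x_k)$ then $h(w)=\lim_k(hg_k)(x_k)$ is a cluster point of the same kind, and the index sets over which one takes cluster points ($\mathbb{P}^n_{\mathbb{C}}\setminus L_0$ for $L_1$, the compacts of $\mathbb{P}^n_{\mathbb{C}}\setminus\Lambda$ for $L_2$) are themselves $\Gamma$-invariant by the preceding steps.

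For item (2) I would argue by contradiction. As $\Omega_{Kul}$ is open, proper discontinuity amounts to showing that for each compact $K\subset\Omega_{Kul}$ the set $\{g\in\Gamma:gK\cap K\neq\emptyset\}$ is finite. If it were infinite, choose distinct $g_k$ and points $x_k\in K$ with $g_k(x_k)\in K$, and pass to subsequences so that $x_k\to x$ and $g_k(x_k)\to y$ with $x,y\in K$. Since $K\subset\Omega_{Kul}\subset\mathbb{P}^n_{\mathbb{C}}\setminus\Lambda$, the limit $y$ is a cluster point of $\Gamma K$ for a compact set disjoint from $\Lambda$, hence $y\in L_2\subset\Lambda_{Kul}$; but $y\in K\subset\Omega_{Kul}$, a contradiction. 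Proposition \ref{p:completes} is what guarantees the extracted sequence has controlled limiting behaviour, but the heart of the matter is simply that $L_2$ is designed to absorb exactly such cluster points.

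For item (3) the goal $\Lambda_{Kul}\subset\mathcal{C}$ breaks into $\Lambda\subset\mathcal{C}$ and $L_2\subset\mathcal{C}$. The first tool is the singleton trick: applying the hypothesis to $K=\{z\}$ with $z\notin\mathcal{C}$ shows the orbit $\Gamma z$ accumulates only in $\Lambda\cap\mathcal{C}$, and since orbits of points already in $\mathcal{C}$ stay in the closed invariant set $\mathcal{C}$, every orbit accumulates inside $\mathcal{C}$, giving $L_1\subset\mathcal{C}$. For $L_0$ I would take a hypothetical $x\in L_0\setminus\mathcal{C}$, a divergent sequence in $\Gamma_x$, and its pseudo-projective limit $\tau_0$ from Proposition \ref{p:completes}; points $w\notin\mathcal{C}$, $w\notin Ker(\tau_0)$ near $x$ satisfy $\tau_0(w)\in\mathcal{C}$, and letting $w\to x$ while distinguishing the cases $\tau_0(x)=x$ and $x\in Ker(\tau_0)$ should force $x\in\mathcal{C}$ by closedness of $\mathcal{C}$. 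Once $\Lambda\subset\mathcal{C}$ we have $\mathbb{P}^n_{\mathbb{C}}\setminus\mathcal{C}\subset\mathbb{P}^n_{\mathbb{C}}\setminus\Lambda$, and comparing the two families of compact sets, together with the invariance and closedness of $\mathcal{C}$, yields $L_2\subset\mathcal{C}$. I expect the $L_0$ step to be the main obstacle: when $x\in Ker(\tau_0)$ the pseudo-projective limit carries no information at $x$, and circumventing this seems to require replacing the sequence by a well-chosen subsequence, or passing to the inverses, so as to arrange $x\notin Ker(\tau_0)$.

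Finally, for item (4) I would prove the equivalent disjointness $\Lambda_{Kul}\cap\mathrm{Eq}(\Gamma)=\emptyset$, where $\mathrm{Eq}(\Gamma)$ denotes the equicontinuity set. The governing principle, extracted from Proposition \ref{p:completes}, is that a point lies in $\mathrm{Eq}(\Gamma)$ only if some neighbourhood of it avoids $Ker(\tau_0)$ for every pseudo-projective limit $\tau_0$ of a sequence in $\Gamma$: across its kernel the convergence $g_k\to\tau_0$ cannot be uniform to a continuous map, contradicting equicontinuity. It then suffices to show that each stratum of $\Lambda_{Kul}$ meets such a kernel. For $x\in L_0$ with infinite isotropy, pick a divergent sequence in $\Gamma_x$ with pseudo-projective limit $\tau_0$ and, along a common subsequence, the limit $\sigma_0$ of the inverses; since the $g_k$ fix $x$ one has either $x\in Ker(\tau_0)$, or $\tau_0(x)=x$, and in the latter case $x$ is a recessive direction for the inverses, so $x\in Ker(\sigma_0)$. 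In all cases $x$ lies in a kernel and is excluded from $\mathrm{Eq}(\Gamma)$. For $z\in L_1$ and $z\in L_2$ the orbit of a point, respectively of a compact set disjoint from $\Lambda$, accumulates at $z$, and applying the same principle to the inverse sequence forces $z$, or its source, into a kernel, again excluding it from $\mathrm{Eq}(\Gamma)$. Since $\mathrm{Eq}(\Gamma)$ is open, passing to closures preserves the disjointness and yields $\mathrm{Eq}(\Gamma)\subset\Omega_{Kul}$. The delicate points I would treat with care are the precise failure of equicontinuity across kernels and the reduction of the $L_1$ and $L_2$ cases to kernel membership via the inverse sequences.
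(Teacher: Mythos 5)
The paper does not prove this proposition at all --- it is quoted from \cite{CNS, CS, kulkarni} --- so your attempt must stand on its own, and items (1) and (2) do: they are correct and standard. The genuine problems are in items (3) and (4). In (3) you misplace the difficulty. The $L_0$ step, which you call the main obstacle and attack with pseudo-projective limits, is in fact immediate: if $x\notin\mathcal{C}$ had infinite isotropy, take $K=\{x\}$; the infinitely many distinct $g\in\Gamma_x$ with $g(x)=x$ exhibit $x$ itself as a cluster point of $\Gamma K$, so the hypothesis forces $x\in\Lambda(\Gamma)\cap\mathcal{C}$, a contradiction --- and your actual sketch (``letting $w\to x$ \ldots should force $x\in\mathcal{C}$'') is not an argument in the case $x\in Ker(\tau_0)$, where the limit $\tau_0$ carries no information at $x$. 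The real gap is the $L_2$ step, which you dismiss with ``comparing the two families of compact sets'': the comparison runs the wrong way. Since $\Lambda(\Gamma)\subset\mathcal{C}$, the compacta avoiding $\Lambda(\Gamma)$ form a \emph{larger} family than those avoiding $\mathcal{C}$, so the hypothesis says nothing directly about a compact $K\subset\mathbb{P}^n_{\mathbb{C}}\setminus\Lambda(\Gamma)$ that meets $\mathcal{C}\setminus\Lambda(\Gamma)$. The missing argument is: given $w=\lim g_k(z_k)$ with $z_k\in K$, pass to $z_k\to z\in K$; if $z\notin\mathcal{C}$, apply the hypothesis to a small closed ball around $z$ disjoint from $\mathcal{C}$; if $z\in\mathcal{C}$, suppose $w\notin\mathcal{C}$, take a closed ball $B$ around $w$ disjoint from $\mathcal{C}$, and apply the hypothesis to $B$ and the \emph{inverse} sequence: $z=\lim g_k^{-1}(g_k(z_k))$ is then a cluster point of $\Gamma B$, hence $z\in\Lambda(\Gamma)\cap\mathcal{C}$, contradicting $K\cap\Lambda(\Gamma)=\emptyset$. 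Note this is the only place where the full strength ``$\subset\Lambda(\Gamma)\cap\mathcal{C}$'' (rather than merely ``$\subset\mathcal{C}$'') of the hypothesis is used; your draft never invokes the $\Lambda$-part, which is a symptom that this step is absent.

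In item (4) your ``governing principle'' --- that no point of $Ker(\tau_0)$ can be an equicontinuity point --- is precisely the hard part, and your one-line justification does not establish it. If $\mathrm{rank}(\tau_0)\geq 2$ one does get a contradiction at $p\in Ker(\tau_0)$ from two lines through $p$ transverse to the kernel, since $\tau_0$ sends the off-kernel points of each line to a single point and the two image points can be chosen distinct; but if $\mathrm{rank}(\tau_0)=1$ the limit is the constant $Im(\tau_0)$ off the kernel and no contradiction arises at this level --- one must pass to the inverse sequence, and possibly iterate inside the kernel, and your phrase ``forces $z$, or its source, into a kernel'' leaves exactly this case open (as do the $L_1$ and $L_2$ reductions, which are only gestured at). A cleaner route, consistent with the architecture you already set up, is to verify that $\mathcal{C}=\mathbb{P}^n_{\mathbb{C}}\setminus Eq(\Gamma)$ satisfies the hypothesis of item (3). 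For compact $K\subset Eq(\Gamma)$ and $g_k(z_k)\to w$ with $z_k\to z\in K$: equicontinuity plus Arzel\`a--Ascoli give a subsequence of $(g_k)$ converging uniformly near $z$ to a continuous $f$, so $w=f(z)=\lim g_k(z)$, whence $w\in\Lambda(\Gamma)$ (if $z\notin L_0(\Gamma)$ this says $w\in L_1(\Gamma)$; if $z\in L_0(\Gamma)$ use that $\Lambda(\Gamma)$ is closed and invariant); and $w\notin Eq(\Gamma)$, since otherwise a subsequence of $(g_k^{-1})$ converges uniformly near $w$, making $f$ locally injective at $z$ --- impossible, because the pseudo-projective limit $\tau$ of a divergent sequence in a discrete group has $Ker(\tau)\neq\emptyset$, and the projectivized cosets of the kernel provide distinct points with equal $\tau$-image in every neighbourhood of $z$. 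Item (3) then yields $\Lambda_{Kul}(\Gamma)\subset\mathcal{C}$, i.e.\ $Eq(\Gamma)\subset\Omega_{Kul}(\Gamma)$, with no separate stratum-by-stratum kernel analysis needed.
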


\section{A starting example}\label{s:example}
Let us consider the element $\gamma\in PU(1,n)$ induced by the  following matrix
\[
\widetilde\gamma
=
\left (
\begin{array}{lll}
2 &             &\\
   & I_{n-1} &\\
   &             &2^{-1}
\end{array}
\right )
\]
and $\Gamma=\langle\langle \gamma\rangle \rangle$.
It is not hard to show that 
$$Eq(\Gamma)=\Bbb{P}^n\setminus (\{e_1\}^\bot\cup \{e_{n+1}\}^\bot )=\Omega_{Kul}(\Gamma).$$
On the other hand,  when one tries to determine if the previous sets are  maximal  open sets on which  $\Gamma$ acts properly 
discontinuously, we find the following phenomena: given $U,W\subset Span(\{e_2,\ldots, e_n\})=\mathcal{L}$ disjoint open sets such 
that $\overline U\cup \overline W=\mathcal{L}$, define 
\[
\begin{array}{l}
\mathcal{U}=\{\overleftrightarrow{e_1,u}\mid u\in \overline U\}\\
\mathcal{V}=\{\overleftrightarrow{e_{n+1},v}\mid v\in \overline V\}.\\
\end{array}
\]
Is not hard to show $\Bbb{P}^n\setminus(\mathcal{U}\cup \mathcal{V})$ is a maximal open set on which $\Gamma$ acts properly discontinuously and every maximal open set for the action of $\Gamma$ arises in this way.

\section{The lambda lemma} \label{s:lambda}
In this section, we develop a tool that will enable us to determine the accumulation points  of the orbit of compact sets, 
compare  results with those in \cite{frances, mendez}.\\

\begin{definition}
Let $(\gamma_m)\subset PU(1,n)$ be a sequence of distinct elements, we will say that $(\gamma_m)$ tends simply  to the infinite if:
\begin{enumerate} 
\item The sequences of  compact factors in the Cartan decomposition of $(\gamma_m)$ converge in $U(n+1)\cap U(1,n)$.
\item The sequence $\lambda(\gamma_m)$ converges to infinity.
\end{enumerate}
\end{definition}

Given a discrete group $\Gamma\subset PU(1,n)$ and a sequence $(\gamma_m)\subset  \Gamma$ of distinct elements, there is a
subsequence $(\tau_m)\subset (\gamma_m)$ tending simply to infinity. \\

\begin{definition}
Let $(\gamma_m)\subset PU(1,n)$ be a sequence tending simply to infinity and $x\in \Bbb{P}^n_\Bbb{C}$, we define:
\[ 
\mathcal{D}_{(\gamma_m)}(x) =
\bigcup
\{
 \textrm{ accumulation points of } (\gamma_m(x_m))
\}.
\]
The union is taken over all sequences converging to $x$.
\end{definition}

Clearly, if $\Gamma\subset PU(1,n)$ is a discrete group, $\Omega\subset \Bbb{P}^n_\Bbb{C}$ is an open set on which $\Gamma$ acts properly discontinuously,  $x\in \Omega$ and $(\gamma_m)\subset \Gamma$ is any sequence tending simply to infinity,  then $\mathcal{D}_{(\gamma_m)}(x)\subset \Bbb{P}^n_\Bbb{C}$.\\
 
Before we provide the main result of this section, let us make a brief digression 
to some projective geometry

\begin{definition}
Given a hyperplane $\mathcal{H}\subset \Bbb{P}^n_{\Bbb{C}}$ and a point 
$p\in \mathcal{H}$ we define
\[
\mathcal {H}(p)=
\{
\ell\in Gr_1(\Bbb{P}^n_\Bbb{C}): p\in \ell \subset \mathcal{H}
\}.
\]
\end{definition}

The following proposition is straightforward.

\begin{lemma}
Given  $p\in \partial \Bbb{H}^n_\Bbb{C}$, define  the following function in $p^\bot(p)\times p^\bot(p)$:

\[
d_{p}(\ell_1,\ell_2)=
\arccos
\left (
\sqrt
{
\frac{\langle q_1,q_2\rangle\langle q_1,q_2\rangle}{\langle q_1,q_1\rangle\langle q_2,q_2\rangle}
}
\right )
\]
where $q_1,q_2\in \Bbb{C}^{n+1}\setminus \{0\}$ are points satisfying
$\ell_i=\overleftrightarrow{p,[q_i]}$. Then $(p^\bot(p),d_p)$ is a metric space isometric to $(\Bbb{P}^{n-1}_\Bbb{C},d_{n-1})$ .
\end{lemma}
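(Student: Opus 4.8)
The plan is to realize $(p^\bot(p),d_p)$ as the projectivization of a positive-definite Hermitian space equipped with its Fubini--Study distance, and then to invoke that such a Fubini--Study space is determined up to isometry.

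First I would normalize $p$. Since $PU(1,n)$ acts transitively on $\partial\Bbb{H}^n_\Bbb{C}$ and each $\gamma\in PU(1,n)$ preserves the incidence relation $p\in\ell\subset p^\bot$ together with the form $\langle\cdot,\cdot\rangle$ (hence the quantity defining $d_p$), an element carrying $p$ to $[e_1]$ induces an isometry between the corresponding spaces; so it is enough to work with $p=[e_1]$. A direct computation with $H$ gives $p^\bot=[\,\{w:w_{n+1}=0\}\,]=Span(\{e_1,\dots,e_n\})$, and since $\langle e_1,e_1\rangle=0$ one has $p\in p^\bot$. Writing $W$ for the linear hyperplane underlying $p^\bot$, the structural observation I would record is that $\langle\cdot,\cdot\rangle|_W$ is positive semidefinite with radical exactly the line $\Bbb{C}\widetilde p$ spanned by the null vector $\widetilde p$; this is where the hypothesis $p\in\partial\Bbb{H}^n_\Bbb{C}$ enters.

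Next I would set up the identification. Any $\ell\in p^\bot(p)$ has the form $\ell=\overleftrightarrow{p,[q]}$ with $q\in W\setminus\Bbb{C}\widetilde p$, and I would send $\ell$ to the class of $q$ in the projectivization of the nondegenerate quotient of $W$ by its radical. Two checks are needed: first, that the assignment is well defined and bijective, for which one notes that $q$ is determined only up to a nonzero scalar and up to adding multiples of $\widetilde p$, and that because $\widetilde p$ spans the radical these ambiguities change neither $\langle q,q\rangle$ nor $\langle q_1,q_2\rangle$ nor the class of $q$, while distinct lines through $p$ visibly yield distinct classes; second, that under this identification the expression defining $d_p$ is exactly $\arccos\big(|\langle q_1,q_2\rangle|\,(\langle q_1,q_1\rangle\,\langle q_2,q_2\rangle)^{-1/2}\big)$, the standard formula for the Fubini--Study distance, with the positive (semi)definiteness from the previous step ensuring the radicand lies in $[0,1]$ so that $d_p$ is a genuine metric.

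Since the target thus carries precisely the Fubini--Study metric of a complex projective space, one concludes that it is isometric to $(\Bbb{P}^{n-1}_\Bbb{C},d_{n-1})$, as asserted. I expect the only delicate point to be the well-definedness in the first check: it is essential that $p$ be a null point, so that $\widetilde p$ lies in the radical of $\langle\cdot,\cdot\rangle|_W$ and the freedom in choosing the representative $q$ does not disturb $d_p$; for a non-null $p$ the same recipe would not produce a well-defined metric. The remaining steps are routine linear algebra.
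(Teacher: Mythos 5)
Your approach is sound, and in fact the paper offers no argument at all here --- the lemma is dismissed with ``The following proposition is straightforward'' --- so your write-up supplies the proof the paper omits, in what is surely the intended way: normalize $p=[e_1]$ using transitivity of $PU(1,n)$ on $\partial\Bbb{H}^n_\Bbb{C}$, observe that $\langle\cdot,\cdot\rangle$ restricted to the underlying hyperplane $W$ of $p^\bot$ is positive semidefinite with radical exactly $\Bbb{C}\widetilde p$ (this is indeed where $p\in\partial\Bbb{H}^n_\Bbb{C}$ is used), and identify $p^\bot(p)$ with the projectivization of the quotient $W/\Bbb{C}\widetilde p$, on which $d_p$ becomes the Fubini--Study distance. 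Your two well-definedness checks are the right ones, and they go through: if $\overleftrightarrow{p,[q]}=\overleftrightarrow{p,[q']}$ then $q'=\alpha q+\beta\widetilde p$ with $\alpha\neq 0$, and since $\widetilde p$ lies in the radical all the Hermitian pairings in the formula scale by $\vert\alpha\vert^2$, leaving the $\arccos$ expression unchanged; positive semidefiniteness with radical $\Bbb{C}\widetilde p$ also gives $\langle q,q\rangle>0$ for admissible $q$ and, via Cauchy--Schwarz, that the radicand lies in $[0,1]$. (You also silently repaired a typo in the statement: the numerator must be $\langle q_1,q_2\rangle\langle q_2,q_1\rangle=\vert\langle q_1,q_2\rangle\vert^2$ for the expression to be real.)

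There is, however, one genuine slip, and it is an internal inconsistency in your last step rather than a gap in the method: your own construction does not yield $\Bbb{P}^{n-1}_\Bbb{C}$. The space $W$ has complex dimension $n$, so the nondegenerate quotient $W/\Bbb{C}\widetilde p$ has dimension $n-1$ and its projectivization is $\Bbb{P}^{n-2}_\Bbb{C}$, not $\Bbb{P}^{n-1}_\Bbb{C}$; equivalently, lines through a fixed point of the hyperplane $p^\bot\cong\Bbb{P}^{n-1}_\Bbb{C}$ form a $\Bbb{P}^{n-2}_\Bbb{C}$. The case $n=2$ makes this vivid: there $p^\bot(p)$ is a single point, not a $\Bbb{P}^1_\Bbb{C}$. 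The index in the lemma as printed is an off-by-one error of the paper itself --- note that the paper later asserts $(Isom(p,q),\star)\cong(PU(n-1),\circ)$, which is precisely the isometry group of $\Bbb{P}^{n-2}_\Bbb{C}$ with its Fubini--Study metric, and that the matrices $M_z$ of Lemma \ref{r:mat} are of size $(n-1)\times(n-1)$ --- so your argument is proving the corrected statement. You should say so explicitly: either flag the typo and conclude isometry with $(\Bbb{P}^{n-2}_\Bbb{C},d_{n-2})$, or redo the final count; as written, your concluding sentence contradicts the identification you constructed two sentences earlier.
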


\begin{definition}
Given $p,q\in\partial \Bbb{H}^n_\Bbb{C}$, we will denote by $Isom(p,q)$ the set  of isometries  from $(p^\bot(p),d_p)$ to the  space $(q^\bot(q),d_q)$ 
\end{definition}

The following  proposition will be crucial along the paper, see also \cite{CS, frances,  mendez, jp}.

\begin{proposition} \label{p:llemaleq}
Let $\Gamma\subset PU(1,n)$ be a discrete group and  $(\gamma_m)\subset \Gamma$   a sequence tending simply to infinity, then there are    two pseudo projective transformations $\tau,\vartheta$   such that

\begin{enumerate}
\item \label{i:llemaleq1} We have  $\gamma_m  \xymatrix{ \ar[r]_{m \rightarrow  \infty}&} \tau$ and $\gamma_m^{-1}  \xymatrix{ \ar[r]_{m \rightarrow  \infty}&} \vartheta$ .

\item \label{i:llemaleq2} The sets  $Im(\tau)$ and $Im(\vartheta)$ are points in the Chen-Greenberg limit set satisfying $Im(\tau)^\bot= Ker(\vartheta)$  and 
$Im(\vartheta)^\bot= Ker(\tau).$
\end{enumerate}
Moreover, there is a projective  equivalence $\phi:Im(\vartheta)^\bot(Im(\vartheta))\rightarrow Im(\tau)^\bot(Im(\tau))$ satisfying:
\begin{enumerate}
\item[(a)]  \label{i:llemaleq4}  The equivalence $\phi$ belongs to $Isom(Im(\vartheta), Im(\tau))$.

\item[(b)]  \label{i:llemaleq5} 
Given   $\ell\in Im(\vartheta)^\bot(Im(\vartheta)) $,  and   $y\in \ell \setminus Im(\vartheta)$,  we know 

$$
\mathcal{D}_{(\gamma_m)}(x)=\phi(\ell).
$$

\item[(c)]  \label{i:llemaleq6} 
Given   $\ell\in Im(\tau)^\bot(Im(\tau)) $,  and   $y\in \ell \setminus Im(\tau)$,  we know 

$$
\mathcal{D}_{(\gamma_m^{-1})}(x)=\phi^{-1}(\ell).
$$

 \end{enumerate}
\end{proposition}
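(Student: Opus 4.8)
The claim has two parts: (1) pointwise/uniform convergence of $\gamma_m$ and $\gamma_m^{-1}$ to pseudo-projective maps, and (2) structural claims about the images, kernels, and the dynamics on the Grassmannian of lines. Let me sketch how I would prove this.

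The key idea is to use the Cartan decomposition $\gamma_m = k_1^{(m)} \mu(\gamma_m) k_2^{(m)}$ together with the hypothesis that $(\gamma_m)$ tends simply to infinity.

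Let me work out the approach.The plan is to reduce everything to the diagonal model supplied by Cartan's decomposition and then read off the limits by a direct computation, transporting the result back by the convergent compact factors. First I would write, for each $m$, the decomposition $\gamma_m = k_1^{(m)} \mu(\gamma_m) k_2^{(m)}$ with $k_i^{(m)}\in K$, where $\mu(\gamma_m)$ has the diagonal lift $\mathrm{diag}(e^{\lambda_m},1,\ldots,1,e^{-\lambda_m})$. Since $(\gamma_m)$ tends simply to infinity, the compact factors converge, say $k_1^{(m)}\to k_1$ and $k_2^{(m)}\to k_2$ in $K$, and $\lambda_m=\lambda(\gamma_m)\to\infty$. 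On the diagonal model the normalized matrices $e^{-\lambda_m}\mathrm{diag}(e^{\lambda_m},1,\ldots,1,e^{-\lambda_m})\to\mathrm{diag}(1,0,\ldots,0)=E$ in $QP(n+1,\Bbb{C})$, whose image is $[e_1]$ and whose kernel is $\{e_1\}^\bot=\mathrm{Span}(e_2,\ldots,e_{n+1})$. Because multiplication by the convergent factors $k_i^{(m)}$ is continuous on $QP(n+1,\Bbb{C})$, the full sequence converges to $\tau:=[[k_1 E k_2]]$, and Proposition \ref{p:completes} upgrades this to uniform convergence on compacta away from $\mathrm{Ker}(\tau)=k_2^{-1}(\{e_1\}^\bot)$. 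An identical argument applied to $\gamma_m^{-1}=k_2^{(m)-1}\mu(\gamma_m)^{-1}k_1^{(m)-1}$, noting that inverting swaps the roles of $e^{\lambda_m}$ and $e^{-\lambda_m}$, yields the limit $\vartheta$. This establishes part \eqref{i:llemaleq1}.

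For part \eqref{i:llemaleq2} I would identify the images and kernels explicitly from these formulas. We get $Im(\tau)=k_1[e_1]$ and $Im(\vartheta)=k_2^{-1}[e_{n+1}]$, while $\mathrm{Ker}(\tau)=k_2^{-1}(\{e_1\}^\bot)$ and $\mathrm{Ker}(\vartheta)=k_1(\{e_{n+1}\}^\bot)$. The crucial observations are geometric: since $e_1,e_{n+1}$ are isotropic vectors (the Hermitian form $H$ pairs $e_1$ with $e_{n+1}$), their projectivizations lie in $\partial\Bbb{H}^n_\Bbb{C}$, and the elements of $K\subset PU(1,n)$ preserve both the form and the boundary, so $Im(\tau),Im(\vartheta)\in\partial\Bbb{H}^n_\Bbb{C}$. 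Because $Im(\tau)$ and $Im(\vartheta)$ arise as limits of $\gamma_m$ and $\gamma_m^{-1}$ applied to interior points, they are accumulation points of an orbit and hence lie in $\Lambda_{CG}(\Gamma)$. The orthogonality identities $Im(\tau)^\bot=\mathrm{Ker}(\vartheta)$ and $Im(\vartheta)^\bot=\mathrm{Ker}(\tau)$ then follow from the fact that $K$ preserves $\langle\,,\rangle$, so taking $\bot$ commutes with the action of $k_1,k_2$; one just checks the base identities $[e_1]^\bot=\{e_{n+1}\}^\bot$-type relations in the diagonal model, which are immediate from the shape of $H$.

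The construction of the projective equivalence $\phi$ and the dynamical statements (a)--(c) are where the real content lies, and I expect this to be the main obstacle. The map $\phi$ should be built by tracking, in the diagonal model, how a line $\ell\in Im(\vartheta)^\bot(Im(\vartheta))$ through the ``source'' boundary point is carried by $\gamma_m$ to a line through the ``target'' boundary point $Im(\tau)$. Concretely, a point $y\in\ell\setminus Im(\vartheta)$ has a component transverse to $\mathrm{Ker}(\tau)$, and under the diagonal scaling the $e^{\lambda_m}$-direction dominates, so $\gamma_m(y)$ converges into the line $\phi(\ell)$ while the image still remembers the transverse direction of $\ell$; this is precisely the ``saddle''/lambda-lemma mechanism. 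To make $\phi$ an element of $Isom(Im(\vartheta),Im(\tau))$ I would verify it preserves the metrics $d_p$ of the previous lemma, which reduces to checking that the induced map on the $(n-1)$-dimensional space of directions is implemented by a unitary transformation (coming from the middle identity block of $\mu(\gamma_m)$ conjugated by the limiting factors $k_1,k_2$). The hard part is controlling the limit $\mathcal{D}_{(\gamma_m)}(x)$ uniformly over all sequences $x_m\to x$: one must show that no matter how $x_m$ approaches the line $\ell$, the accumulation set is exactly the full line $\phi(\ell)$ and nothing more, which requires a careful estimate showing that the transverse-to-$\mathrm{Ker}(\tau)$ component of $x_m$ cannot degenerate in a way that collapses or enlarges the image. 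Once (a) and (b) are in place, statement (c) follows by symmetry, applying the same analysis to the sequence $(\gamma_m^{-1})$ and using $\phi^{-1}$.
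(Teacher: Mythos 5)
Your proposal follows essentially the same route as the paper: Cartan decomposition with convergent compact factors, passage to the diagonal model to read off $\tau$, $\vartheta$, their images and kernels, and the lambda-lemma mechanism for the action on lines through the fixed boundary points. The one step you defer as ``the hard part'' --- showing $\mathcal{D}_{(\gamma_m)}(x)$ is exactly the line $\phi(\ell)$ --- is in the paper just the short coordinate computation $\gamma_m(x_m)=[e^{\alpha_m}x_{1m},x_{2m},\ldots,x_{nm},e^{-\alpha_m}x_{n+1,m}]$, where $e^{\alpha_m}x_{1m}$ is an indeterminate form sweeping out all of $\hat{\Bbb{C}}$ over the admissible sequences while the middle block converges to a fixed nonzero vector, so the accumulation set is precisely the claimed line.
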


\begin{proof} Let us show part  (\ref{i:llemaleq1}). By the Cartan's decomposition theorem  there are  sequences $(\alpha_m)\in \Bbb{R}^+$, $(\kappa_m),(\tilde \kappa_m)\in K=U(n+1)\cap U(1,n)$, such that 
\begin{equation} \label{e:dec}
\gamma=
\left [
\kappa_m
\left (
\begin{array}{lllll}
e^{\alpha_m}\\
                 & 1\\
                 &          & \ddots\\
                 &          &            &1\\
                 &          &            &     & e^{-\alpha_m}
\end{array}
\right )
\tilde\kappa_m
\right ].
\end{equation}
We can assume that $\kappa_m \xymatrix{ \ar[r]_{m \rightarrow  \infty}&}\kappa_1\in  K $, $\tilde\kappa_m\xymatrix{ \ar[r]_{m \rightarrow  \infty}&} \kappa_2\in K$ and $\alpha_m\xymatrix{ \ar[r]_{m \rightarrow  \infty}&} \infty$. Equation \ref{e:dec} shows that    

\begin{equation}\label{e:cd}
\begin{array}{l}
\gamma_m  \xymatrix{ \ar[r]_{m \rightarrow  \infty}&} \tau=
\left [
\kappa_1
\left (
\begin{array}{lllll}
1   \\
                 & 0\\
                 &          & \ddots\\
                 &          &            &0\\
\end{array}
\right )
\kappa_2
\right ]\\ \\
\gamma^{-1}_m \xymatrix{ \ar[r]_{m \rightarrow  \infty}&} \vartheta=
\left [
\kappa_2^{-1}
\left (
\begin{array}{lllll}
 0\\
                           & \ddots\\
                           &       & 0\\
                           &       &            &1\\
\end{array}
\right )
\kappa^{-1}_1
\right ],
\end{array}
\end{equation}
and it shows part (\ref{i:llemaleq1}).\\

Let us show part (\ref{i:llemaleq2}). Observe that equation \ref{e:cd}  yields
\[
\begin{array}{l}
Im(\tau)=\kappa_1(e_{1} )\\
Im(\vartheta)=\kappa_2^{-1}(e_{n+1} )\\
Ker(\tau)=\kappa_2^{-1}(Span (e_2,\ldots,e_{n+1} ))\\
Ker(\vartheta)=\kappa_1(Span (e_1,\ldots,e_{n} )),\\
\end{array}
\]
which shows part (\ref{i:llemaleq2}).  \\

 Now  let us show part (a). Let us define
\[
\begin{array}{l}
\phi:Im(\vartheta)^\bot(Im(\vartheta))\rightarrow Im(\tau)^\bot(Im(\tau))\\
 \phi(\ell)=\kappa_1(H(\kappa_2(\ell))).\\
\end{array}
\]
it is not hard to see that $\phi$ is an isometry from $(Im(\vartheta)^\bot(Im(\vartheta)),d_{Im(\vartheta)})$ to the  space $(Im(\tau)^\bot(Im(\tau)),d_{Im(\tau)})$. \\

 Now let us show part (b). Let $\ell\in Im(\vartheta)^\bot (Im(\vartheta))$, $w\in Im(\vartheta)^\bot \setminus Im(\vartheta)$, and $(x_m)\subset \Bbb{P}^n_\Bbb{C}$ such that 
$x_m \xymatrix{ \ar[r]_{m \rightarrow  \infty}&} w$, then $\kappa_2(x_m)\xymatrix{ \ar[r]_{m \rightarrow  \infty}&} \kappa_2(w)$, where $\kappa_2(w)\in Span(e_2,\ldots, e_{n+1} )\setminus \{e_{n+1}\}$,   $\kappa_2(x_m)=[x_{1m},\ldots,x_{n+1,m}]$,  $\kappa_2(w)=[0, x_2,\ldots,x_{n+1}]$,  
$\sum_{j=2}^{n}\vert x_j\vert\neq 0 $ and 
$ x_{jm}\xymatrix{ \ar[r]_{m \rightarrow  \infty}&} x_j. $ Then

\begin{equation}\label{e:ac}
\left [
\begin{array}{lllll}
e^{\alpha_m}\\
                 & 1\\
                 &          & \ddots\\
                 &          &            &1\\
                 &          &            &     & e^{-\alpha_m}
\end{array}
\right ]
\left
[
\begin{array}{c}
x_{1m}\\
\vdots\\
x_{n+1,m}
\end{array}
\right ]
=
\left
[
\begin{array}{c}
e^{\alpha_m}x_{1m}\\
x_{2m}\\
\vdots\\
x_{n,m}\\
e^{\alpha_m}x_{n+1,m}
\end{array}
\right ]
=w_m,
\end{equation}
thus the accumulation points of $(w_m)$ lie on the line $\overleftrightarrow{e_1,[0,x_2,\ldots,x_n,0]}$, thus the accumulation points of $(\gamma_m(x_m))$ lie on the line $$\overleftrightarrow{Im(\tau),\kappa_1([0,x_2,\ldots,x_n,0])}=\phi(\ell).$$
Now, the proof of this part follows from Equation \ref{e:ac}.\\

In order to conclude the proof, one must observe that the proof of part (c) is similar to the proof of part (a).
\end{proof}

The proofs of the next two corollaries follow straightforwardly.

\begin{corollary} [See \cite{jp}] Let $\Gamma\subset PU(2,1)$ be a discrete group non-elementary group, then
\begin{enumerate}

\item The Kulkarni discontinuity region $\Omega_{Kul}(\Gamma)$ is the largest open set on  which $\Gamma$ acts properly and discontinuously and agrees with $Eq(\Gamma)$.

\item The Kulkarni limit set  of $\Gamma$ can be described as the  hyperplanes tangent to $\partial \Bbb{H}^2_\Bbb{C}$ at  points in the Chen-Greenberg limit set of $\Gamma$, {\it i. e.}

\[
\Lambda_{Kul}(\Gamma)=\bigcup_{p\in \Lambda_{CG}(\Gamma)}p^\bot.
\]
\end{enumerate}
\end{corollary}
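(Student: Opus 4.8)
The plan is to identify the closed, $\Gamma$-invariant set $\mathcal{C}:=\bigcup_{p\in\Lambda_{CG}(\Gamma)}p^\bot$ with $\Lambda_{Kul}(\Gamma)$ and then read off part (1). First I would record the routine facts that $\mathcal{C}$ is closed (the assignment $p\mapsto p^\bot$ is continuous and $\Lambda_{CG}(\Gamma)$ is compact) and $\Gamma$-invariant (since every $\gamma\in PU(2,1)$ preserves $\langle,\rangle$ one has $\gamma(p^\bot)=(\gamma p)^\bot$, and $\Lambda_{CG}(\Gamma)$ is $\Gamma$-invariant). I would also note $\Lambda_{CG}(\Gamma)\subset\Lambda(\Gamma)$: a ball point $x_0\in\Bbb{H}^2_\Bbb{C}$ has finite isotropy in the discrete group $\Gamma$, so $x_0$ can be chosen off $L_0(\Gamma)$, and the cluster points of $\Gamma x_0$ are exactly $\Lambda_{CG}(\Gamma)$, hence lie in $L_1(\Gamma)$.

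For the inclusion $\Lambda_{Kul}(\Gamma)\subset\mathcal{C}$ I would apply Proposition \ref{p:pkg}(\ref{i:pk4}) with this $\mathcal{C}$. Let $K\subset\Bbb{P}^2_\Bbb{C}\setminus\mathcal{C}$ be compact and let a sequence in $\Gamma$ realize a cluster point of $\Gamma K$. Every sequence of distinct elements of a discrete group has a subsequence $(\gamma_m)$ tending simply to infinity, and by Proposition \ref{p:llemaleq} its pseudo-projective limit $\tau$ has rank one, with $Im(\tau)\in\Lambda_{CG}(\Gamma)$ and $Ker(\tau)=Im(\vartheta)^\bot\subset\mathcal{C}$. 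Since $K$ is disjoint from $\mathcal{C}$ it is disjoint from $Ker(\tau)$, so Proposition \ref{p:completes} gives $\gamma_m\to\tau$ uniformly on $K$, whence $\gamma_m(K)\to Im(\tau)$. Thus every cluster point of $\Gamma K$ lies in $\Lambda_{CG}(\Gamma)\subset\Lambda(\Gamma)\cap\mathcal{C}$, and Proposition \ref{p:pkg}(\ref{i:pk4}) yields $\Lambda_{Kul}(\Gamma)\subset\mathcal{C}$.

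The reverse inclusion $\mathcal{C}\subset\Lambda_{Kul}(\Gamma)$ is where the lambda lemma does the real work. Fix $p\in\Lambda_{CG}(\Gamma)$ and choose $(\gamma_m)$ tending simply to infinity with $Im(\tau)=p$; write $q=Im(\vartheta)\in\Lambda_{CG}(\Gamma)$, so $Ker(\tau)=q^\bot$. In dimension two $q^\bot$ is itself a line, so the single element of $q^\bot(q)$ is $q^\bot$, and Proposition \ref{p:llemaleq}, part (b), reads $\mathcal{D}_{(\gamma_m)}(y)=\phi(q^\bot)=p^\bot$ for any $y\in q^\bot\setminus\{q\}$. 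I would pick such a $y$ off the closed set $L_0(\Gamma)$ (possible since infinite-isotropy points meet the line $q^\bot$ in a proper closed subset), using the $n=2$ geometry that $q^\bot$ is tangent to $\partial\Bbb{H}^2_\Bbb{C}$ only at $q$, so $y\notin\partial\Bbb{H}^2_\Bbb{C}\supset L_1(\Gamma)$; then a compact neighborhood $N$ of $y$ is disjoint from $\Lambda(\Gamma)$. Running the computation of Equation \ref{e:ac} along approach sequences $y_m\to y$ of prescribed rates shows that every point of $p^\bot$ is a cluster point of $\Gamma N$, hence $p^\bot\subset L_2(\Gamma)$. Taking the union over $p\in\Lambda_{CG}(\Gamma)$ gives $\mathcal{C}\subset L_2(\Gamma)\subset\Lambda_{Kul}(\Gamma)$, so $\Lambda_{Kul}(\Gamma)=\mathcal{C}$, which is statement (2).

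Finally, for part (1), the uniform-convergence argument of the second paragraph shows that on $\Bbb{P}^2_\Bbb{C}\setminus\mathcal{C}$ every sequence of $\Gamma$ has a subsequence converging locally uniformly, so $\Bbb{P}^2_\Bbb{C}\setminus\mathcal{C}\subset Eq(\Gamma)$; combined with $Eq(\Gamma)\subset\Omega_{Kul}(\Gamma)=\Bbb{P}^2_\Bbb{C}\setminus\mathcal{C}$ from Proposition \ref{p:pkg}(4) this forces $Eq(\Gamma)=\Omega_{Kul}(\Gamma)$. Proper discontinuity on $\Omega_{Kul}(\Gamma)$ is Proposition \ref{p:pkg}(\ref{i:pk3}); to see it is the largest such open set I would show that no open set meeting $\mathcal{C}$ carries a properly discontinuous action: if $U$ is open with $\Gamma$ properly discontinuous and $x\in U\cap\mathcal{C}$, the escaping dynamics furnished by Proposition \ref{p:llemaleq} near $x$ produces a compact $K\subset U$ and infinitely many $\gamma\in\Gamma$ with $\gamma K\cap K\neq\emptyset$, a contradiction. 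I expect the main obstacle to be exactly this last point---upgrading the accumulation statement $\mathcal{D}_{(\gamma_m)}(y)=p^\bot$ into a genuine failure of proper discontinuity at \emph{every} point of $\mathcal{C}$ (not merely a failure of equicontinuity), together with the careful selection of base points $y$ off $L_0(\Gamma)$ so that all compacta in play avoid $\Lambda(\Gamma)$.
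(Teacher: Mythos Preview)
Your argument for $\Lambda_{Kul}(\Gamma)\subset\mathcal{C}$ via Proposition~\ref{p:pkg}(\ref{i:pk4}) is correct and is essentially what the paper does (through Corollary~\ref{c:eqpu} together with Proposition~\ref{p:pkg}(4)).

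The gap is in the reverse inclusion. Your claim $L_1(\Gamma)\subset\partial\Bbb{H}^2_\Bbb{C}$ is false, and with it the choice of a base point $y\in q^\bot\setminus\Lambda(\Gamma)$ collapses. Indeed, take any $y\in q^\bot\setminus\{q\}$ with $y\notin L_0(\Gamma)$ and apply the very computation in Equation~\ref{e:ac} to the \emph{constant} sequence $x_m=y$: after conjugating by $\kappa_2$ one has $y\leftrightarrow[0,w_2,w_3]$, and $\mu(\gamma_m)$ sends this to $[0,w_2,e^{-\alpha_m}w_3]\to[0,w_2,0]=e_2$. Since $\kappa_1\in U(1,2)$ and $\langle e_2,e_2\rangle>0$, the limit $\kappa_1(e_2)$ lies strictly outside $\overline{\Bbb{H}^2_\Bbb{C}}$. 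Thus the orbit $\Gamma y$ already clusters off $\partial\Bbb{H}^2_\Bbb{C}$, so $L_1(\Gamma)\not\subset\partial\Bbb{H}^2_\Bbb{C}$; in fact every point of $q^\bot$ may lie in $\Lambda(\Gamma)$, so you cannot manufacture a compact $N\subset\Bbb{P}^2_\Bbb{C}\setminus\Lambda(\Gamma)$ around $y$ as written.

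The paper sidesteps this entirely by arguing by contradiction against $\Omega_{Kul}(\Gamma)$ rather than against $\Lambda(\Gamma)$. If some $w_0\in p^\bot$ lay in $\Omega_{Kul}(\Gamma)$, one takes (via Lemma~\ref{l:2tran} with $x_+=x_-=p$, which is where non-elementarity enters) a sequence tending simply to infinity with $Im(\tau)=Im(\vartheta)=p$; in dimension~$2$ the set $p^\bot(p)$ is a single point, so Proposition~\ref{p:llemaleq}(b) gives $\mathcal{D}_{(\gamma_m)}(w_0)=p^\bot$. A compact neighbourhood of $w_0$ inside the open set $\Omega_{Kul}(\Gamma)$ then witnesses, by proper discontinuity, that $p^\bot\subset\Lambda_{Kul}(\Gamma)$, contradicting $w_0\in p^\bot\cap\Omega_{Kul}(\Gamma)$. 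The same sequence shows that no open set meeting $\mathcal{C}$ can carry a properly discontinuous action, which is exactly the maximality statement you flagged as the obstacle; once one has a sequence with $Im(\tau)=Im(\vartheta)=p$, that obstacle disappears. The missing idea in your write-up is precisely this: force the attracting and repelling data to coincide at $p$, so that the line produced by the $\lambda$-lemma passes through the point you started from.
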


\begin{corollary} \label{c:eqpu}[See \cite{CS}] Let $\Gamma\subset PU(1,n)$ be a discrete group, then the complement of the equicontinuity region of $\Gamma$ are the  hyperplanes tangent to $\partial \Bbb{H}^n_\Bbb{C}$ at  points in the Chen-Greenberg limit set of $\Gamma$, {\it i. e.}

\[
\Bbb{P}^n_\Bbb{C}\setminus Eq(\Gamma)=\bigcup_{p\in \Lambda_{CG}(\Gamma)}p^\bot.
\]
\end{corollary}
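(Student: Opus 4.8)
The plan is to deduce the corollary directly from the $\lambda$-lemma (Proposition \ref{p:llemaleq}) together with the uniform convergence of Proposition \ref{p:completes}, by showing that a point fails to be equicontinuous precisely when it lies in the kernel of some limit pseudo-projective map. I will use three standing facts: that $Eq(\Gamma)$ is open; that a family of maps into the compact space $\Bbb{P}^n_\Bbb{C}$ is equicontinuous on an open set if and only if it is a normal family there; and that, by the remark preceding Proposition \ref{p:llemaleq}, every sequence of distinct elements of $\Gamma$ has a subsequence tending simply to infinity (a subsequence with $\lambda(\gamma_m)$ bounded would, via the Cartan decomposition, accumulate in $PU(1,n)$, contradicting discreteness).

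First I would identify which kernels occur. If $(\gamma_m)$ tends simply to infinity with limits $\tau,\vartheta$, then part (\ref{i:llemaleq2}) of Proposition \ref{p:llemaleq} gives $Ker(\tau)=Im(\vartheta)^\bot$ with $Im(\vartheta)\in\Lambda_{CG}(\Gamma)$, so every such kernel is $q^\bot$ for some $q\in\Lambda_{CG}(\Gamma)$. Conversely, given $p\in\Lambda_{CG}(\Gamma)$ I pick $x_0\in\Bbb{H}^n_\Bbb{C}$ and $\gamma_m$ with $\gamma_m(x_0)\to p$, and pass to a simply divergent subsequence; since $x_0$ lies in the open ball while $Ker(\tau)=Im(\vartheta)^\bot$ is tangent to $\partial\Bbb{H}^n_\Bbb{C}$ and hence disjoint from the ball, Proposition \ref{p:completes} forces $Im(\tau)=p$. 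Applying this to the (again simply divergent) inverse sequence $(\gamma_m^{-1})$, whose forward limit satisfies $Ker=Im(\tau)^\bot=p^\bot$, shows that every $p^\bot$ is realized. Thus the union of all kernels of simply divergent sequences is exactly $\bigcup_{p\in\Lambda_{CG}(\Gamma)}p^\bot$.

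For the inclusion $\Bbb{P}^n_\Bbb{C}\setminus Eq(\Gamma)\subset\bigcup_p p^\bot$ I argue the contrapositive. The union $\bigcup_{p\in\Lambda_{CG}(\Gamma)}p^\bot$ is closed, being the projection of the compact incidence set $\{(q,y):q\in\Lambda_{CG}(\Gamma),\ y\in q^\bot\}$, so a point $x$ outside it has a compact neighborhood $V$ meeting no $q^\bot$. Any sequence of distinct elements then has a simply divergent subsequence whose kernel $q^\bot$ misses $V$, so $\gamma_m\to\tau$ uniformly on $V$ by Proposition \ref{p:completes}(2); hence $\Gamma|_V$ is normal and $x\in Eq(\Gamma)$. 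For the reverse inclusion, fix $p\in\Lambda_{CG}(\Gamma)$ and a simply divergent sequence with $Ker(\tau)=p^\bot$ and $Im(\vartheta)=p$, provided by the previous step. For $y\in p^\bot\setminus\{p\}$, part (b) of Proposition \ref{p:llemaleq} makes $\mathcal{D}_{(\gamma_m)}(y)$ an entire projective line, so suitable sequences $x_m,x_m'\to y$ have images accumulating at distinct points, violating equicontinuity at $y$. Since $Eq(\Gamma)$ is open its complement is closed and contains the dense subset $p^\bot\setminus\{p\}$, hence contains $p$ as well; combining the two inclusions yields the asserted equality.

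The hard part will be the equicontinuity direction, specifically the passage from ``each simply divergent sequence converges uniformly away from its own kernel'' to normality of the whole family near $x$: this requires a single neighborhood $V$ that simultaneously avoids the kernels of all such sequences, which is exactly why the closedness of $\bigcup_{p\in\Lambda_{CG}(\Gamma)}p^\bot$ — ultimately the compactness of $\Lambda_{CG}(\Gamma)$ — is indispensable. A secondary, more technical point is that the $\lambda$-lemma describes the blow-up dynamics only at $y\in p^\bot\setminus\{p\}$, so the tangency points $p$ themselves must be recovered separately, which the openness of $Eq(\Gamma)$ provides.
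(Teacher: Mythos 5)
Your argument is correct and follows exactly the route the paper intends: it deduces the corollary from Proposition \ref{p:llemaleq} together with Proposition \ref{p:completes} (the paper itself only remarks that the corollary ``follows straightforwardly'' from the lambda lemma, deferring to \cite{CS} for details). You correctly supply the two points the paper leaves implicit, namely the closedness of $\bigcup_{p\in \Lambda_{CG}(\Gamma)}p^\bot$ needed to get a single kernel-avoiding neighborhood for the normality direction, and the recovery of the tangency points $p$ themselves from the closedness of $\Bbb{P}^n_\Bbb{C}\setminus Eq(\Gamma)$.
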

\section{The control group}\label{s:control}
As we will see, in order to understand the dynamic of $\Gamma$ it will suffice to understand the dynamic in $Gr(1,n)$. Such thing  can 
be done by introducing the  action of an adequate group. Let us consider the following definition.
 
\begin{definition}
Let $\Gamma\subset PU(1,n)$ be a discrete non-elementary group and $x,y\in \Lambda_{CG}(\Gamma)$. A transformation $\gamma:y^\bot(y) \rightarrow  x^\bot(x) $ is called a Cartan map for $(y,x,\Gamma)$ if there is a sequence $(\gamma_m)\subset \Gamma$ tending simply to infinity  such that 
\begin{enumerate}
\item  We have $\gamma_m \xymatrix{ \ar[r]_{m \rightarrow  \infty}&} x$ uniformly on compact  sets of $\Bbb{P}^n_\Bbb{C}\setminus y^\bot $.
\item Also $\gamma_m^{-1} \xymatrix{ \ar[r]_{m \rightarrow  \infty}&} y$ uniformly on compact  sets of $\Bbb{P}^n_\Bbb{C}\setminus x^\bot $.
\item For each $\ell\in y^\bot(y)$ and each $w\in\ell$ we have:
\[
\mathcal{D}_{(\gamma_m)}(w)=\gamma(\ell)
\]
\item For each $\ell\in x^\bot(x)$ and each $w\in\ell$ we have:
\[
\mathcal{D}_{(\gamma_m^{-1})}(w)=\gamma^{-1}(\ell)
\]
\end{enumerate}
The  sequence $(\gamma_m)$ is called the Cartan sequence associated to $\gamma$.
\end{definition}

The following lemma will be useful.

\begin{lemma} [See \cite{kamiya}]\label{l:2tran}
Let $\Gamma\subset PU(1,n)$ be a discrete non-elementary group and points  $x_+,x_-\in \Lambda_{CG}(\Gamma)$, then there is a sequence  $(\gamma_m)\subset \Gamma$ of distinct elements  such that 
\[
\gamma_m ^{\pm 1}\xymatrix{ \ar[r]_{m \rightarrow  \infty}&} x_\pm
\]
 uniformly  on compact set of $\Bbb{H}^n_{\Bbb{C}}$. 
\end{lemma}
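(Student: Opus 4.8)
The plan is to reduce the statement to a purely dynamical assertion about realizable pairs of attracting and repelling points, and then prove that assertion by a density-plus-diagonal argument. First I would record the following reformulation. If $(\gamma_m)\subset\Gamma$ tends simply to infinity, then by Proposition \ref{p:llemaleq} it converges to a pseudo-projective map $\tau$ with $Im(\tau)\in\Lambda_{CG}(\Gamma)$, and by Propositions \ref{p:completes} and \ref{p:llemaleq} the convergence $\gamma_m\to Im(\tau)$ is uniform on compact subsets of $\Bbb{P}^n_\Bbb{C}\setminus Ker(\tau)=\Bbb{P}^n_\Bbb{C}\setminus Im(\vartheta)^\bot$. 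Since $Im(\vartheta)\in\partial\Bbb{H}^n_\Bbb{C}$ is a null vector, a computation with the Hermitian form shows that the tangent hyperplane $Im(\vartheta)^\bot$ meets $\overline{\Bbb{H}^n_\Bbb{C}}$ only at the point $Im(\vartheta)$ (the form restricted to $Im(\vartheta)^\bot$ is positive outside the null line it contains), so every compact subset of the open ball lies in $\Bbb{P}^n_\Bbb{C}\setminus Im(\vartheta)^\bot$. Hence $\gamma_m\to Im(\tau)$ uniformly on compact subsets of $\Bbb{H}^n_\Bbb{C}$, and likewise $\gamma_m^{-1}\to Im(\vartheta)$. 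Evaluating at a fixed basepoint $o\in\Bbb{H}^n_\Bbb{C}$ shows that the attracting point $Im(\tau)$ equals $\lim_m\gamma_m(o)$ and the repelling point $Im(\vartheta)$ equals $\lim_m\gamma_m^{-1}(o)$. Thus the lemma is equivalent to: for every $x_+,x_-\in\Lambda_{CG}(\Gamma)$ there is a sequence tending simply to infinity whose attracting and repelling points are exactly $x_+$ and $x_-$; call such a pair realizable.

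Next I would establish that realizable pairs are dense in $\Lambda_{CG}(\Gamma)\times\Lambda_{CG}(\Gamma)$. Since $\Gamma$ is non-elementary it is infinite; extracting from any sequence of distinct elements a subsequence tending simply to infinity (as recalled just after the definition of simple divergence) produces at least one realizable pair $(a_0,r_0)$, with $a_0,r_0\in\Lambda_{CG}(\Gamma)$. If $(a,r)$ is realized by $(\gamma_m)$ and $g,h\in\Gamma$ are arbitrary, then $g\gamma_m h(o)=g(\gamma_m(ho))\to g(a)$ and $(g\gamma_m h)^{-1}(o)=h^{-1}(\gamma_m^{-1}(g^{-1}o))\to h^{-1}(r)$, because $ho,g^{-1}o\in\Bbb{H}^n_\Bbb{C}$ and $g,h$ act as homeomorphisms of $\Bbb{P}^n_\Bbb{C}$; passing $(g\gamma_m h)$ to a simply divergent subsequence then shows that $(g(a),h^{-1}(r))$ is realizable. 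Consequently the realizable pairs contain $\{(g(a_0),h^{-1}(r_0)):g,h\in\Gamma\}$. By the minimality of the $\Gamma$-action on $\Lambda_{CG}(\Gamma)$ for non-elementary groups, a standard property of the Chen-Greenberg limit set (see \cite{CG}), the orbits $\Gamma a_0$ and $\Gamma r_0$ are dense in $\Lambda_{CG}(\Gamma)$; since $g$ and $h$ vary independently, the realizable pairs are dense in $\Lambda_{CG}(\Gamma)\times\Lambda_{CG}(\Gamma)$.

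Finally I would upgrade density to exact realization by a diagonal argument. Given the target $(x_+,x_-)$, choose realizable pairs $(a_k,r_k)\to(x_+,x_-)$, each realized by a sequence $(\gamma^{(k)}_m)_m$ with $\gamma^{(k)}_m(o)\to a_k$ and $(\gamma^{(k)}_m)^{-1}(o)\to r_k$ as $m\to\infty$. For each $k$ pick $m(k)$ so large that $\delta_k:=\gamma^{(k)}_{m(k)}$ satisfies $d_n(\delta_k(o),a_k)<1/k$ and $d_n(\delta_k^{-1}(o),r_k)<1/k$; then $\delta_k(o)\to x_+$ and $\delta_k^{-1}(o)\to x_-$. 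Because $\delta_k(o)$ tends to a boundary point while each fixed element of $\Gamma$ sends $o$ to a fixed interior point, only finitely many $\delta_k$ can coincide, so after relabelling $(\delta_k)$ is a sequence of distinct elements; extracting a subsequence tending simply to infinity, its attracting point is $\lim_k\delta_k(o)=x_+$ and its repelling point is $\lim_k\delta_k^{-1}(o)=x_-$, which realizes $(x_+,x_-)$ and, via the reformulation of the first paragraph, proves the lemma. The main obstacle is exactly this last passage from density to exactness together with the minimality input: the conjugation step only moves attracting and repelling points within dense orbits, so one genuinely needs the diagonal extraction, and the verification that the extracted sequence still diverges simply with the prescribed limit points, to pin the pair down exactly while keeping both realized points inside $\Lambda_{CG}(\Gamma)$.
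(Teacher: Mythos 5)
The paper offers no proof of this lemma at all: it is stated with a pointer to \cite{kamiya} and used as a black box, so there is nothing internal to compare your argument against. Your proof is correct and self-contained, and it is the standard ``convergence-group'' argument for prescribing attracting and repelling points. The reduction in your first paragraph is sound: for a null vector $v$ the restriction of the signature-$(1,n)$ form to $v^{\bot}$ is positive semidefinite with radical $\Bbb{C}v$ (any negative or independent null vector in $v^{\bot}$ would violate the Witt index), so $v^{\bot}\cap\Bbb{H}^n_{\Bbb{C}}=\emptyset$ and uniform convergence off $Ker(\tau)=Im(\vartheta)^{\bot}$ (Propositions \ref{p:completes} and \ref{p:llemaleq}) does restrict to uniform convergence on compacta of the ball. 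The conjugation step $(a,r)\mapsto(g(a),h^{-1}(r))$ and the diagonal extraction are both carried out carefully (in particular you correctly dispose of the possibility that the diagonal sequence repeats an element, since a fixed isometry keeps $o$ in the interior). The only external inputs are the existence of a simply divergent subsequence of any sequence of distinct elements (stated in Section \ref{s:lambda}), Proposition \ref{p:llemaleq}, and the minimality of the $\Gamma$-action on $\Lambda_{CG}(\Gamma)$ for non-elementary groups, which you attribute to \cite{CG}; the last is a genuinely nontrivial but standard fact, and it is not circular here since its usual proof needs only the weaker ``some attracting/repelling pair exists'' statement that the paper already supplies. What your route buys is independence from Kamiya's paper at the modest cost of importing minimality; a reader who wants the article fully self-contained would still need to either prove minimality from Proposition \ref{p:llemaleq} (which can be done by the same north--south dynamics) or keep the reference to \cite{CG}.
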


\begin{lemma}
Let $\Gamma\subset PU(1,n)$ be a discrete and non elementary group and $x,y\in \Lambda_{CG}(\Gamma)$ be distinct points, then
\[
\Gamma(y,x)=
\left \{
\gamma: y^\bot (y)\rightarrow x^\bot(x)\mid \gamma 
\textrm{ is a cartan  map for } (y,x,\Gamma)
\right \}
\]
is a non-empty  subset of $Isom(y,x)$.
\end{lemma}

\begin{proof}
Let us show that $\Gamma(y,x)$  is non-empty. Let  $(\gamma_m)\subset \Gamma$ be the sequence given by Lemma \ref{l:2tran}.  
By Lemma \ref{p:llemaleq},  we can assume that  $(\gamma_m)$ tends simply to infinity, $\gamma_m \xymatrix{ \ar[r]_{m \rightarrow  \infty}&} 
x$ uniformly on $\Bbb{P}^n_\Bbb{C}\setminus y^\bot$, $\gamma_m ^{- 1}\xymatrix{ \ar[r]_{m \rightarrow  \infty}&} 
y$ uniformly on $\Bbb{P}^n_\Bbb{C}\setminus x^\bot$ and   there is  $\mu:y^\bot(y)\rightarrow x^\bot(x)$ such that for each
$\ell\in y^\bot(y)$ (resp. $\ell\in x^\bot(x)$ ) and $w\in \ell\setminus \{y\}$ (resp $w\in \ell\setminus \{x\}$). We have 
$\mathcal{D}_{(\gamma_m)}(w)=\mu(\ell)$ (resp.  $\mathcal{D}_{(\gamma_m^{-1})}(w)=\mu^{-1}(\ell)$). Therefore,
$\Gamma(y,x)$ is non-empty.
\end{proof}

By means of the arguments used in the proof of the previous lemma, it is not hard to show that $\Gamma(y,y)$ is a non-empty  set of 
$Isom(y,y)$.

\begin{definition}
Let $\Gamma\subset PU(1,n)$ be a  discrete group, $x,y\in \Lambda_{CG}(\Gamma)$,    $\mu \in \Gamma(y,x)$, $(\gamma_m)\subset \Gamma$ a Cartan sequence associated to $\mu$ and $T$  a projective transformation of $P(\bigwedge^2 \Bbb{C}^{n+1})$, we will say that $T$ is  a Cartan extension for $\mu$ with respect $(\gamma_m)$. If   
\begin{enumerate}
\item The Cartan decomposition of $\gamma_m$ is $k^+_m A_m k^-_m$.
\item We have $k_m^\pm \xymatrix{ \ar[r]_{m \rightarrow  \infty}&} k^\pm $, where $k^+, k^-\in K=PU(n + 1) \cap PU(1, n).$
\item Also $T=k^+H k^-$.
\end{enumerate}
Moreover, we will say that $T$ is a Cartan extension for  $\mu$ if there is a Cartan sequence $(\gamma_m)$  associated to $\mu$ such that 
$T$ is a Cartan extension for $\mu$ with respect $(\gamma_m)$.

\end{definition}

The proof of the next lemma follows straightforwardly and we omit it here.

 \begin{lemma}
Let $\Gamma\subset PU(1,n)$ be a  discrete group, $x,y\in \Lambda_{CG}(\Gamma)$ and    $\mu \in \Gamma(y,x)$. If $T$ is a Cartan extension  for $\mu$, then $T(\ell)=\mu(\ell)$ for each $\ell \in y^\bot (y)$.
\end{lemma}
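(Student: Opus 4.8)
The plan is to make the Cartan map $\mu$ completely explicit by means of Proposition \ref{p:llemaleq}, and then to observe that the transformation $T$ acts on Pl\"ucker coordinates in exactly the same way as the composite $k^+Hk^-$ acts on the corresponding lines, so that $T$ and $\mu$ must agree on $y^\bot(y)$. First I would unwind the hypothesis that $T$ is a Cartan extension for $\mu$: there is a Cartan sequence $(\gamma_m)\subset\Gamma$ associated to $\mu$, tending simply to infinity, with Cartan decomposition $\gamma_m=k_m^+A_mk_m^-$ and $k_m^\pm\xymatrix{ \ar[r]_{m \rightarrow \infty}&}k^\pm\in K$, so that $T=k^+Hk^-$ as a projective transformation of $P(\bigwedge^2\Bbb{C}^{n+1})$. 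Since $(\gamma_m)$ is a Cartan sequence associated to $\mu$, condition (3) in the definition of a Cartan map provides, for every $\ell\in y^\bot(y)$ and every $w\in\ell$, the identity $\mathcal{D}_{(\gamma_m)}(w)=\mu(\ell)$.

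Next I would apply Proposition \ref{p:llemaleq} to this same sequence $(\gamma_m)$. Matching the notation, the limits $\kappa_1,\kappa_2$ of the compact factors are precisely $k^+,k^-$, and conditions (1)--(2) of the Cartan map definition force $Im(\tau)=x$ and $Im(\vartheta)=y$ (they pin down $Ker(\tau)=y^\bot$ and $Ker(\vartheta)=x^\bot$, hence the directions of convergence). Consequently the projective equivalence produced by the proposition is $\phi:y^\bot(y)\rightarrow x^\bot(x)$ with $\phi(\ell)=\kappa_1(H(\kappa_2(\ell)))=k^+Hk^-(\ell)$, where $H$ and $k^\pm$ act as linear maps on lines. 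Part (b) of the proposition then gives $\mathcal{D}_{(\gamma_m)}(w)=\phi(\ell)$ for $w\in\ell\setminus\{y\}$. Comparing with the previous paragraph, picking any such $w$, yields $\mu(\ell)=\phi(\ell)=k^+Hk^-(\ell)$ for every $\ell\in y^\bot(y)$, where the right-hand side is read as the image of the line $\ell$ under the projective transformation $k^+Hk^-$.

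It then remains to compare this with $T(\ell)$. For this I would invoke the $PSL(n+1,\Bbb{C})$-equivariance of the Pl\"ucker embedding $\iota:Gr(1,n)\rightarrow P(\bigwedge^2\Bbb{C}^{n+1})$ recorded in Section \ref{s:recall}. If $\ell$ is the line spanned by $v,w$, then the $\bigwedge^2$-action of $k^+Hk^-$ sends $v\wedge w$ to $(k^+Hk^-v)\wedge(k^+Hk^-w)$, whose projectivization is exactly the Pl\"ucker coordinate of the line $k^+Hk^-(\ell)$; since $H$ is invertible, $k^+Hk^-\in PSL(n+1,\Bbb{C})$ and this applies. Thus $T(\ell)=k^+Hk^-(\ell)$ under the identification $\iota$, and together with $\mu(\ell)=k^+Hk^-(\ell)$ this gives $T(\ell)=\mu(\ell)$, as desired.

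I do not expect a serious obstacle here, which is consistent with the authors labelling the proof straightforward: the entire content is the identification of $\mu$ with the explicit map $\phi$ of Proposition \ref{p:llemaleq}, followed by the equivariance of the Pl\"ucker embedding. The only point demanding care is the bookkeeping that matches the compact factors $k^\pm$ of the Cartan extension with the limits $\kappa_1,\kappa_2$ of the proposition, and the verification that $Im(\tau),Im(\vartheta)$ really are $x,y$; both are forced by the convergence conditions (1)--(2) built into the definition of a Cartan map.
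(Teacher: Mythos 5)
Your argument is correct: the paper actually omits this proof entirely (declaring it straightforward), and what you have written is the natural reconstruction of the intended argument --- unwind the definition of a Cartan extension, identify $k^\pm$ with the limits $\kappa_1,\kappa_2$ of the compact factors in Proposition \ref{p:llemaleq} and $x,y$ with $Im(\tau),Im(\vartheta)$, use part (b) of that proposition to get $\mu(\ell)=\kappa_1(H(\kappa_2(\ell)))$, and conclude via the $PSL(n+1,\Bbb{C})$-equivariance of the Pl\"ucker embedding. The only point worth flagging is notational rather than mathematical: the paper writes $T=k^+Hk^-$ as a transformation of $P(\bigwedge^2\Bbb{C}^{n+1})$, so your explicit remark that this means the action of $\bigwedge^2(k^+Hk^-)$, which corresponds under $\iota$ to the action of $k^+Hk^-$ on lines, is exactly the bookkeeping needed to close the argument.
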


\begin{lemma}
Let $\Gamma\subset PU(1,n)$ be a  discrete group and $x,y\in \Lambda_{CG}(\Gamma)$ be distinct points, then  $\Gamma(y,x)$ is a closed 
subset of $Isom(y,x)$.
\end{lemma}

\begin{proof}
 Let $(\mu_m)\subset \Gamma(y,x)$  and $\mu\in Isom(y,x)$ such that $ \mu_m \xymatrix{ \ar[r]_{m \rightarrow  \infty}&} \mu $. For each $m\in \Bbb{N}$ let $(\gamma_{jm})\subset \Gamma$ be a Cartan sequence associated to $\mu_m$ satisfying:
 
\begin{enumerate}
\item The Cartan decomposition of $\gamma_{jm}$ is given by $\gamma_{jm}=k^+_{jm}A_{jm}k^-_{jm}$.

\item There are 
$k^+_m, k^-_m \subset U(n+1)\cap U(1,n)$  
such that  
\[
d(k^\pm_{jm}, k^\pm_m)< 2^{-j}\\
\]

\end{enumerate}

Therefore,  $T_m=k^+_mH k^-_m $  is a Cartan extension for $\mu_m$ with respect  $(\gamma_{jm})$. Since $K$ is compact, there is a 
strictly increasing sequence $(n_m)\subset \Bbb{N}$ and  $k_+,k_-\in K=PU(n + 1) \cap  P U(1, n),$ such that 
\[
d(k_{n_i}^\pm, k_\pm)< 2^{-i}.
\]
Therefore, if $\ell \in y^\bot(y)$ then  
$$
\mu_{n_m}(\ell)= T_{n_m} (\ell) 
\xymatrix{ \ar[r]_{m \rightarrow  \infty}&} 
 k_+( H(k_-(\ell))). 
$$
Therefore, taking  $T=k_+ Hk_-$ we have $T(\ell)=\mu(\ell)$
 for each $\ell \in y^\bot(y)$. In order to conclude  the proof,  observe there is a strictly increasing sequence $l_m$ such that 
 $(\gamma_{l_m n_m})$ is a Cartan sequence associated to $\mu$.
\end{proof}

\begin{remark}
Given $\Gamma\subset PU(1,n)$ a  discrete subgroup and 
$y\in \Lambda_{kul}(\Gamma)$, by means of the arguments used in the proof of the previous lemma, it is not hard to show that $\Gamma(y,y)$ is a non-empty closed set of $Isom(y,y)$.
\end{remark}

Let us define the following binary operation in $Bihol(p,q)$

\begin{definition}
Let $p,q\in \partial{\Bbb{H}^n_\Bbb{C}}$, let 
\[
Bihol(p,q)=
\{
\mu:p^\bot(p)\rightarrow q^\bot (q)\vert \mu \textrm{ is biholomorphic}
\}
\]
We define the  binary operation $\star$  on $Bihol(p,q)$
\[
\mu\star \nu (\ell)
=
\left \{
\begin{array}{ll}
\mu  (\nu(\ell)) & \textrm{ if } p=q\\
\mu(Span(\{\nu(\ell)\cap p^\bot(p),p\})
\end{array}
\right .
\]
\end{definition}

The following result will show that the operation $\star$ is quite usual.

\begin{lemma}
Given $p,q\in \partial \Bbb{H}^n_\Bbb{C}$, the algebraic structure $(Bihol(p,q),\star)$ is a group isomorphic to 
$(PSL(n-1,\Bbb{C}),\circ)$. Moreover, $(Isom(p,q),\star)$ is an isomorphic group to $(PU(n-1),\circ)$.
\end{lemma}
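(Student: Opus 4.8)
The plan is to produce, once and for all, a single concrete model shared by $p^\bot(p)$ and $q^\bot(q)$ and to show that in this model the operation $\star$ is nothing but ordinary composition; the statement then reduces to the classical facts that the biholomorphism group of $\Bbb{P}^{n-2}_\Bbb{C}$ is $PSL(n-1,\Bbb{C})$ and that its group of holomorphic isometries for the Fubini--Study metric is $PU(n-1)$. I would first dispose of the case $p=q$, which is immediate: there $\star$ is genuine composition, $Bihol(p,p)=\mathrm{Aut}(p^\bot(p))\cong\mathrm{Aut}(\Bbb{P}^{n-2}_\Bbb{C})=PSL(n-1,\Bbb{C})$, and the $d_p$-isometries among these are exactly $PU(n-1)$. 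So all the content lies in the case $p\neq q$.

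For $p\neq q$ the first thing I would establish is that $\langle p,q\rangle\neq 0$: otherwise $Span\{p,q\}$ would be a $2$-dimensional isotropic subspace, impossible for a form of signature $(1,n)$. Hence $q\notin p^\bot$ and $p\notin q^\bot$. Put $W=(\Bbb{C}p\oplus\Bbb{C}q)^\bot$; then $W$ has complex dimension $n-1$, the form is positive definite on $W$, and $\widetilde{p^\bot}=\Bbb{C}p\oplus W$, $\widetilde{q^\bot}=\Bbb{C}q\oplus W$, so that $p^\bot\cap q^\bot=\Bbb{P}(W)\cong\Bbb{P}^{n-2}_\Bbb{C}$ carries the Fubini--Study metric $d_{n-2}$ induced by the positive definite form on $W$. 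I would then introduce the two projections $\pi_p\colon p^\bot(p)\to\Bbb{P}(W)$ and $\pi_q\colon q^\bot(q)\to\Bbb{P}(W)$ sending a line to its (unique, since $p,q\notin W$) point of intersection with $\Bbb{P}(W)$. Using the elementary identity $\langle\alpha p+\beta w,\,\alpha'p+\beta'w'\rangle=\beta\overline{\beta'}\langle w,w'\rangle$ for $w,w'\in W$ (and its analogue with $q$), I would check that $\pi_p,\pi_q$ are biholomorphisms and isometries onto $(\Bbb{P}(W),d_{n-2})$; this is precisely where the defining formula for $d_p$ enters, recovering the intrinsic Fubini--Study description of $(p^\bot(p),d_p)$.

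With these identifications in place, the heart of the matter is to recognize the connecting map built into $\star$. Reading $\nu(\ell)\cap p^\bot(p)$ as the intersection point $\nu(\ell)\cap p^\bot$, I would verify by a direct intersection computation that
\[
\nu(\ell)\longmapsto Span\{\nu(\ell)\cap p^\bot,\,p\}
\]
is exactly $\pi_p^{-1}\circ\pi_q$, again using $p,q\notin W$ to guarantee that the intersection point exists and that the resulting line lies in $p^\bot(p)$. Writing $\hat\mu=\pi_q\circ\mu\circ\pi_p^{-1}$ for $\mu\in Bihol(p,q)$ and abbreviating the connecting map by $P=\pi_p^{-1}\circ\pi_q$, the definition $\mu\star\nu=\mu\circ P\circ\nu$ gives
\[
\widehat{\mu\star\nu}=\pi_q\circ\mu\circ(\pi_p^{-1}\circ\pi_q)\circ\nu\circ\pi_p^{-1}=\hat\mu\circ\hat\nu .
\]
Thus $\mu\mapsto\hat\mu$ is a bijection of $Bihol(p,q)$ onto $\mathrm{Aut}(\Bbb{P}(W))$ intertwining $\star$ with $\circ$, so $(Bihol(p,q),\star)$ is a group isomorphic to $PSL(n-1,\Bbb{C})$, with no separate check of the group axioms required. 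Since $\pi_p,\pi_q$, and hence $P$, are isometries, $\mu$ is an isometry if and only if $\hat\mu$ is, and the holomorphic isometries of $(\Bbb{P}(W),d_{n-2})$ are precisely $PU(W)=PU(n-1)$; therefore the same bijection restricts to an isomorphism $(Isom(p,q),\star)\cong(PU(n-1),\circ)$.

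I expect the only real obstacle to be bookkeeping rather than ideas: checking simultaneously that $\pi_p$ and $\pi_q$ are biholomorphic and $d$-isometric, and that the connecting map in $\star$ coincides with $\pi_p^{-1}\circ\pi_q$. Every one of these verifications rests on the single non-degeneracy fact $\langle p,q\rangle\neq 0$, so I would isolate it at the very beginning; once $\pi_p,\pi_q$, and $P$ are identified as isometric isomorphisms, both the group isomorphism and its restriction to isometries follow formally.
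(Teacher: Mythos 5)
Your proposal is correct and complete. For comparison purposes there is little to compare against: the paper states this lemma with no proof at all (it is introduced only by the remark that ``the operation $\star$ is quite usual''), so you are supplying an argument the authors omitted. Your route is the natural one and it works: the non-degeneracy $\langle p,q\rangle\neq 0$ for distinct null lines (no $2$-dimensional isotropic subspace in signature $(1,n)$) is indeed the single fact everything rests on; it gives $\widetilde{p^\bot}=\Bbb{C}p\oplus W$ and $\widetilde{q^\bot}=\Bbb{C}q\oplus W$ with $W=(\Bbb{C}p\oplus\Bbb{C}q)^\bot$ positive definite, the maps $\pi_p,\pi_q$ are isometric biholomorphisms onto $\Bbb{P}(W)$ (the invariance of the $d_p$-formula under changing representatives $q_i\mapsto \alpha p+\beta q_i$ uses exactly $\langle p,p\rangle=\langle p,q_i\rangle=0$), and since $\ell'\subset q^\bot$ forces $\ell'\cap p^\bot=\ell'\cap p^\bot\cap q^\bot=\ell'\cap\Bbb{P}(W)$, the connecting map in $\star$ is precisely $\pi_p^{-1}\circ\pi_q$, turning $\star$ into composition on $\mathrm{Aut}(\Bbb{P}(W))\cong PSL(n-1,\Bbb{C})$. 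Your identification is also consistent with how the paper later uses the identity of $(Bihol(y,z),\star)$, namely $\ell\mapsto Span((\ell\cap y^\bot)\cup\{y\})$. One small caveat worth making explicit: as literally defined, $Isom(p,q)$ (all metric isometries) would be $PU(n-1)$ extended by the anti-holomorphic isometries of Fubini--Study; the stated conclusion $PU(n-1)$ requires reading $Isom(p,q)$ as the isometries lying in $Bihol(p,q)$, which is the reading your proof adopts and the one the paper clearly intends.
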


\begin{lemma} \label{l:monoid}
Let $\Gamma\subset PU(1,n)$ be a non-elementary discrete group and $x,y\in \Lambda_{CG}(\Gamma)$ be distinct points, then 
$\Gamma(y,x)$ is closed under the operation $\star$. In particular, $\Gamma(y,x)$ is a closed monoid.
\end{lemma}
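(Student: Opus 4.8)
The plan is to realize the $\star$-product of two Cartan maps as the Cartan map of the product of their Cartan sequences. Fix the distinct points $x,y\in\Lambda_{CG}(\Gamma)$ and let $\mu,\nu\in\Gamma(y,x)$ have associated Cartan sequences $(\gamma_m)$ and $(\beta_m)$. Writing their Cartan decompositions as $\gamma_m=[k_m^+A_mk_m^-]$ and $\beta_m=[l_m^+B_ml_m^-]$, with $A_m=\mathrm{diag}(e^{\alpha_m},1,\dots,1,e^{-\alpha_m})$ and $B_m=\mathrm{diag}(e^{\eta_m},1,\dots,1,e^{-\eta_m})$, I first pass to a subsequence so that $k_m^\pm\to k^\pm$, $l_m^\pm\to l^\pm$ in $K$ and $\alpha_m,\eta_m\to\infty$. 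The candidate sequence is $\delta_m:=\gamma_m\beta_m\in\Gamma$, which I rewrite as $\delta_m=[k_m^+A_m\,c_m\,B_ml_m^-]$ with $c_m:=k_m^-l_m^+\to c:=k^-l^+$.

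First I would show that, after a further subsequence, $(\delta_m)$ tends simply to infinity with source $y$ and sink $x$. The decisive input is that two distinct points of $\partial\Bbb{H}^n_\Bbb{C}$ are never orthogonal: a totally isotropic subspace of a form of signature $(1,n)$ has dimension one, so $\langle x,y\rangle\neq0$, that is $x\notin y^\bot$. Transporting by $k^-$ (which sends $y$ to $e_{n+1}$ and carries $x$ into the direction $c\,e_1$) this is precisely the condition $c_{11}\neq0$. Hence the $(1,1)$ entry $c_{11}e^{\alpha_m+\eta_m}$ strictly dominates every other entry of $A_mc_mB_m$, and after normalization $\delta_m$ converges in $QP(n+1,\Bbb{C})$ to a pseudo-projective map with image $[k^+e_1]=x$ and kernel $(l^-)^{-1}(e_{n+1}^\bot)=y^\bot$; in particular $\lambda(\delta_m)\to\infty$. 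Extracting a subsequence making $(\delta_m)$ tend simply to infinity, Proposition \ref{p:completes} immediately yields conditions (1) and (2) in the definition of a Cartan map for $(y,x,\Gamma)$.

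The heart of the matter is condition (3): $\mathcal{D}_{(\delta_m)}(w)=(\mu\star\nu)(\ell)$ for $\ell\in y^\bot(y)$ and $w\in\ell$. I would compute this as a genuine two-scale limit. For $w_m\to w$ set $u_m=l_m^-w_m\to u=(0,u_2,\dots,u_{n+1})$ and follow $A_mc_mB_mu_m$: because $c_{11}\neq0$, the first coordinate of $c_mB_mu_m$ can be tuned, through the direction in which $w_m$ approaches $w$, so that $e^{\alpha_m}$ times it converges to an arbitrary value, while the middle coordinates stabilize and the last is annihilated by $e^{-\alpha_m}$. Letting this free parameter run sweeps out a full projective line $L\ni x$ inside $x^\bot$, so $\mathcal{D}_{(\delta_m)}(w)=L$. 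To identify $L$ I would use the structural formula $\mu\star\nu=\mu\circ\pi_{x\to y}\circ\nu$, where $\pi_{x\to y}\colon x^\bot(x)\to y^\bot(y)$, $m\mapsto Span(\{m\cap y^\bot,y\})$, is induced by the linear map fixing $y^\bot$ pointwise and sending $x$ to $y$ (well defined on the relevant lines since $x\notin y^\bot$), together with the fact from the proof of Proposition \ref{p:llemaleq} that $\mu$ and $\nu$ are realized on lines by $[k^+Hk^-]$ and $[l^+Hl^-]$. Expanding both $L$ and $(\mu\star\nu)(\ell)$ in the standard basis and using that both lines contain $x=[k^+e_1]$ --- so that the describing vectors need only agree modulo $e_1$ --- the two expressions differ exactly by a multiple of $e_1$ coming from $H$, and therefore determine the same line. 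Condition (4) is then automatic from part (c) of Proposition \ref{p:llemaleq}. This gives $\mu\star\nu\in\Gamma(y,x)$, so $\Gamma(y,x)$ is closed under $\star$. I expect this identification of the swept line $L$ with the twisted product to be the main obstacle, since it forces one to keep the two exponential scales $\alpha_m$ and $\eta_m$ separated and to match the resulting limit against the definition of $\star$.

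For the final assertion I would not produce a $\star$-identity by hand. By the preceding lemmas $\Gamma(y,x)$ is a non-empty closed subset of $(Isom(y,x),\star)\cong PU(n-1)$, a compact metrizable group, and it has just been shown to be closed under $\star$. A non-empty closed subsemigroup of a compact group is a subgroup: for $s\in\Gamma(y,x)$ a convergent subsequence $s^{n_k}\to g$ (powers taken with respect to $\star$) forces $s^{n_{k+1}-n_k}\to e$ and $s^{n_{k+1}-n_k-1}\to s^{-1}$, so both $e$ and $s^{-1}$ lie in $\Gamma(y,x)$ by closedness. In particular $\Gamma(y,x)$ contains the $\star$-identity, and since associativity is inherited from the ambient group it is a closed monoid, as claimed.
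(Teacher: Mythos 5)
Your proof is correct, and its core mechanism is the same as the paper's: multiply the two Cartan sequences and show the product is a Cartan sequence for $\mu\star\nu$. The differences are in execution and are worth recording. For the identification of the limit, the paper argues softly: it applies Proposition \ref{p:llemaleq} to the product sequence to know a priori that $\mathcal{D}_{(\gamma_{1n_m}\gamma_{2n_m})}(w)$ is a line $\nu(\ell)$ for some isometry $\nu$, and then pins down $\nu$ by a two-stage tracking of accumulation points ($w_m\mapsto w_1\mapsto w_2$), splitting on whether $w_1$ lies in $y^\bot$; you instead run the Cartan decompositions through each other and do the two-scale asymptotics on $A_mc_mB_m$ directly. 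Your route makes explicit the one non-degeneracy fact that the paper uses only implicitly, namely $c_{11}=\langle x,y\rangle\neq 0$ because distinct boundary points are never orthogonal (signature $(1,n)$ admits no $2$-dimensional totally isotropic subspace); this is exactly what guarantees in the paper's version that $\nu(\ell)\cap y^\bot$ is a single point and that $\nu(\ell)\not\ni y$, so your computation and theirs land on the same line $\mu\bigl(Span(\{\nu(\ell)\cap y^\bot,y\})\bigr)$. The more substantive divergence is the final claim: the paper's proof stops at closure under $\star$ and never exhibits an identity element, so the word ``monoid'' is not actually justified there (it is only recovered later in Theorem \ref{t:monoid} via the compact-semigroup/idempotent argument). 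Your closing step --- a non-empty closed subsemigroup of the compact group $(Isom(y,x),\star)\cong PU(n-1)$ is a subgroup, via $s^{n_{k+1}-n_k}\to e$ and $s^{n_{k+1}-n_k-1}\to s^{-1}$ --- fills that gap cleanly and in fact already yields the group statement of Theorem \ref{t:monoid}. The only point to be careful about, which you handle by passing to a subsequence, is that the Cartan sequence associated to $\mu\star\nu$ is the extracted subsequence of $(\gamma_m\beta_m)$ tending simply to infinity, not the full product sequence.
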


\begin{proof}
Let $\gamma_1,\gamma_2\in \Gamma(y,x)$ and  $(\gamma_{1m}), (\gamma_{2m})\subset \Gamma$  be the Cartan sequences associated to 
$\gamma_1$ and $\gamma_2$, respectively. A straightforward calculation shows $\gamma_{1m}\gamma_{2m}\xymatrix{ \ar[r]_{m \rightarrow  \infty}&} x $ uniformly on compact  sets 
of $\Bbb{P}^n_\Bbb{C}\setminus y^\bot$ and $\gamma_{2m}^{-1}\gamma_{2m}^{-1}\xymatrix{ \ar[r]_{m \rightarrow  \infty}&} y $ uniformly 
on compact  sets of $\Bbb{P}^n_\Bbb{C}\setminus x^\bot$. By Proposition \ref{p:llemaleq}, there is  a strictly  increasing sequence 
$(n_m)\subset \Bbb{N}$ and $\nu\in \Gamma(y,x)$ such that for every $\ell\in y^\bot (y)$ and $w\in \ell\setminus\{y\}$ we have
\[
\begin{array}{l}
\mathcal{D}_{(\gamma_{1n_m}\gamma_{2n_m})}(w)=\nu(\ell)\\
\end{array}
\]

Let $\ell \in y^\bot(y)$, $w\in \ell\setminus \{y\}$ and $(w_m)\subset \Bbb{P}^n_\Bbb{C}$ such that 
$w_m \xymatrix{ \ar[r]_{m \rightarrow  \infty}&} w $. Also, let us assume that there are $w_1,w_2\in \Bbb{P}^2_{\Bbb{C}}$ such that 
\[
\begin{array}{l}
\gamma_{2n_m}(w_m)\xymatrix{ \ar[r]_{m \rightarrow  \infty}&} w_1\\
\gamma_{1n_m}(\gamma_{2n_m}(w_m))\xymatrix{ \ar[r]_{m \rightarrow  \infty}&} w_2.
\end{array}
\]
Observe that in case  $w_1\notin y^\bot $, then $w_2=x$. On the other hand,  when $w_1\in y^\bot $, we have  
$w_1=\gamma_2(\ell)\cap y^\bot$. Consequently, $w_2\in \gamma_1(Span(\{\gamma_2(\ell)\cap y^\bot,y\}))$. Therefore, 
$\nu=\gamma_1\star\gamma_2$.
\end{proof}

\begin{theorem} \label{t:monoid}
Let $\Gamma\subset PU(1,n)$ be a non-elementary discrete group and $x,y\in \Lambda_{CG}(\Gamma)$ be distinct points, then  
$\Gamma(y,x)$ is a compact Lie group.
\end{theorem}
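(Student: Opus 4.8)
The plan is to realize $\Gamma(y,x)$ as a closed subsemigroup of a compact Lie group and then invoke the classical principle that a nonempty closed subsemigroup of a compact group is automatically a subgroup. First I would set $G:=(Isom(y,x),\star)$ and recall, from the lemma identifying $(Isom(y,x),\star)$ with $(PU(n-1),\circ)$, that $G$ is a compact Lie group. By Lemma \ref{l:monoid}, $\Gamma(y,x)$ is a nonempty subset of $Isom(y,x)$ which is closed in $G$ and stable under $\star$; in other words, $\Gamma(y,x)$ is a nonempty closed subsemigroup of the compact group $G$.

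The core of the argument is to promote ``closed subsemigroup'' to ``subgroup''. Fix $\mu\in\Gamma(y,x)$; by stability under $\star$, every positive power $\mu^{\star n}$ (with $n\geq 1$) lies in $\Gamma(y,x)$. Since $G$ is compact, some subsequence satisfies $\mu^{\star n_k}\to a\in G$; passing to a further subsequence I may assume the gaps $m_k:=n_{k+1}-n_k$ tend to infinity and that $\mu^{\star m_k}\to b$ for some $b\in G$. Taking the limit in $\mu^{\star n_{k+1}}=\mu^{\star n_k}\star\mu^{\star m_k}$ yields $a=a\star b$, so $b$ is the identity $\mathrm{id}$ of $G$; as each $\mu^{\star m_k}$ lies in the closed set $\Gamma(y,x)$, we get $\mathrm{id}\in\Gamma(y,x)$. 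For inverses, write $\mu^{\star(m_k-1)}=\mu^{-1}\star\mu^{\star m_k}\to\mu^{-1}\star\mathrm{id}=\mu^{-1}$, where $\mu^{-1}$ denotes the inverse of $\mu$ in $G$; since $m_k\geq 2$ eventually, $\mu^{\star(m_k-1)}\in\Gamma(y,x)$, and closedness gives $\mu^{-1}\in\Gamma(y,x)$. Hence $\Gamma(y,x)$ is a subgroup of $G$.

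To conclude, $\Gamma(y,x)$ is a closed subgroup of the Lie group $G$, so by Cartan's closed subgroup theorem it is an embedded Lie subgroup; being a closed subset of the compact group $G$ it is itself compact, whence $\Gamma(y,x)$ is a compact Lie group.

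I expect the main obstacle to be the subsemigroup-to-subgroup step, where care is needed to carry out the iterated extraction of subsequences using only the (sequential) compactness of the metric group $G$, and to produce the identity element intrinsically rather than assume it, since Lemma \ref{l:monoid} a priori guarantees only closure under $\star$. The remaining ingredients are routine appeals to the structure theory of compact Lie groups.
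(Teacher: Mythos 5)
Your proposal is correct, and its overall strategy coincides with the paper's: both identify $\Gamma(y,x)$ as a nonempty closed subsemigroup of the compact group $(Isom(y,x),\star)\cong (PU(n-1),\circ)$ (using Lemma \ref{l:monoid} and the closedness lemma) and then upgrade ``closed subsemigroup'' to ``subgroup''. Where you differ is in how that upgrade is carried out. The paper fixes $a\in\Gamma(y,x)$, forms $A=\overline{\{a,a^2,\dots\}}$, and appeals to semigroup structure theory: a compact abelian semigroup has a minimal ideal containing an idempotent, the only idempotent in a group is the identity, and Green's relations then force the minimal ideal to be all of $A$ and $A$ to be a group containing $a^{-1}$. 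You instead give a bare-hands sequential-compactness argument: extract $\mu^{\star n_k}\to a$ with gaps $m_k\to\infty$ and $\mu^{\star m_k}\to b$, cancel in the ambient group to get $b=\mathrm{id}$, and then obtain $\mu^{-1}$ as the limit of $\mu^{\star(m_k-1)}$. The two routes prove the same classical fact; yours is more self-contained and avoids the paper's somewhat informal invocation of ``abundant semigroups'' and Green's theorem, while the paper's is shorter if one takes that semigroup machinery as known. You are also slightly more careful than the paper in two respects: you establish explicitly that the identity itself lies in $\Gamma(y,x)$ before producing inverses (noting correctly that the identity in the equation $\mu^{\star(m_k-1)}=\mu^{-1}\star\mu^{\star m_k}$ is an identity in the ambient group, so no circularity arises), and you name Cartan's closed subgroup theorem as the source of the Lie group structure, a step the paper leaves implicit.
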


\begin{proof}
We will show $\Gamma(y,x)$ is indeed a group. Since $(Isom(p,q),\star)$ is isomorphic to $(PU(n-1),\circ)$. We will also show that
$\Gamma(y,x)$ is a subgroup of $(PU(n-1),\circ)$ after identifying $(Isom(p,q),\star)$ with $(PU(n-1),\circ)$. Please note 
$(Isom(p,q),\star)$ is also compact.

Let $a\in\Gamma(y,x)$ be an arbitrary element, and consider the set 
\begin{equation*}
	A:=\overline{\lbrace a, a^2,a^3,...,a^n,..\rbrace}.
\end{equation*}
One can easily see $A\subset\Gamma(y,x)\subset PU(n-1)$ is a closed Abelian subsemigroup. Since $PU(n-1)$ is compact, $A$ is also 
compact which implies $A$ is an Abelian abundant semigroup. That is, $A$ has minimal ideals and each ideal contains an idempotent 
element. Notice that $s=\mathrm{id}$ is the only element in $PU(n-1)$ satisfying $s^2=s$. 
Now, let $\mathcal{I}\subset A$ be a minimal ideal of $A$, and thus $\mathrm{id}\in\mathcal{I}$. It is clear that
$\mathcal{I}$ is a group because for any element $t\in\mathcal{I}$, we have 
$t\mathcal{I}=\mathcal{I}=\mathcal{I}t$, since $\mathcal{I}$ is a two-sided minimal ideal and cannot be smaller. By
Green's theorem (in context of Green's relations), one can see $\mathcal{I}$ is a subgroup of $A$. 
Since $\mathcal{I}$ is an ideal with $\mathrm{id}$, $\mathcal{I}=A$. Now, $A$ is a group and $a^{-1}\in A\subset\Gamma(y,x)$ which 
completes the proof.
\end{proof}

\begin{definition}
Let  $\Gamma\subset PU(1,n)$ be a non-elementary discrete subgroup,  a point  
$y\in \Lambda_{Kul}(\Gamma)$ and $\mu_0\in \Gamma(y,y)$.  We will say that  $\mu_0$ is  a $y$-Cartan limit if there are:
\begin{enumerate} 
\item A sequence 
   of distinct elements $(y_m)\subset \Lambda_{Kul}(\Gamma)$  converging to $y$.

\item For each $m\in \Bbb{N}$ a  Cartan map $\mu_m\in\Gamma(y_m,y)$.

 \item For each $m\in \Bbb{N}\cup \{0\}$   a Cartan extension $T_m$ for $\mu_m$. 
\end{enumerate}
Such that $T_m$ converges to $T_0$ as projective transformations. Also, we will say that $(\mu_m)\subset \bigcup_{x,y\in \Lambda_{CG}(\Gamma)}\Gamma(x,y)=\mathfrak{C}(\Gamma)$ converges to $\mu_0\in \mathfrak{C}(\Gamma)$, if for each $m\in \Bbb{N}\cup \{0\}$ there is a Cartan extension $T$ of $\mu_m$ such that $T_m \xymatrix{ \ar[r]_{m \rightarrow  \infty}&} T _0$, as projective transformations.
\end{definition}

\begin{lemma} \label{l:prin}
Let $\Gamma\subset PU(1,n)$ be a non-elementary discrete subgroup and $y\in \Lambda_{CG}(\Gamma)$, then  
 $$
\Gamma(y)=
\overline{
\left \{
\mu :y^\bot(y) \rightarrow  y^\bot(y)\mid \mu  \textrm{ is a finite composition of } y\textrm{-Cartan limits}
\right \}
}
$$ 
is a closed monoid contained in $\Gamma(y,y)$.
\end{lemma}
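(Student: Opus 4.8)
The plan is to reduce the statement to the structure of $\Gamma(y,y)$ as a closed submonoid of $Isom(y,y)\cong PU(n-1)$, and then to promote the subsemigroup of finite compositions to a monoid by a compactness argument. First I would record that, by the remark following Theorem~\ref{t:monoid} together with the argument of Lemma~\ref{l:monoid} applied with $x=y$ (where the operation $\star$ is ordinary composition), $\Gamma(y,y)$ is a closed subset of $Isom(y,y)$ that is stable under $\star$, hence a closed submonoid of $Isom(y,y)$. Since every $y$-Cartan limit lies in $\Gamma(y,y)$ by definition, and $\Gamma(y,y)$ is closed under composition, the set
\[
S=\left\{\mu:y^\bot(y)\to y^\bot(y)\mid \mu \textrm{ is a finite composition of } y\textrm{-Cartan limits}\right\}
\]
is contained in $\Gamma(y,y)$ and is manifestly a subsemigroup of $(\Gamma(y,y),\star)$.

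Next I would reconcile the two relevant notions of convergence in order to obtain $\Gamma(y)\subseteq\Gamma(y,y)$. The key point is that a Cartan extension $T$ of a map $\mu$ restricts to $\mu$ on $y^\bot(y)$, so convergence of Cartan extensions $T_m\to T_0$ in $P(\bigwedge^2\Bbb{C}^{n+1})$ forces $\mu_m(\ell)\to\mu_0(\ell)$ for every $\ell\in y^\bot(y)$, that is, convergence in $Isom(y,y)$. Thus each element of $\Gamma(y)$ is an $Isom(y,y)$-limit of elements of $S\subseteq\Gamma(y,y)$; as $\Gamma(y,y)$ is closed in $Isom(y,y)$ this gives $\Gamma(y)\subseteq\Gamma(y,y)\subseteq PU(n-1)$. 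The set $\Gamma(y)$ is closed by construction, being defined as a closure for the Cartan-extension convergence, which is inherited from the compact space $P(\bigwedge^2\Bbb{C}^{n+1})$, so that a diagonal argument shows it is stable under taking further such limits.

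It then remains to check that $\Gamma(y)$ is a monoid. For stability under $\star$ I would use that if $T_\mu,T_\nu$ are Cartan extensions of $\mu,\nu$, then the product $T_\mu T_\nu$ is a Cartan extension of $\mu\star\nu$, the composition of the two Cartan sequences realizing $\star$ exactly as in the proof of Lemma~\ref{l:monoid}; this makes $\star$ continuous for the Cartan-extension convergence, so the product of two limits of elements of $S$ is again such a limit and $\Gamma(y)=\overline{S}$ is a closed subsemigroup. To produce an identity I would repeat the compact-semigroup argument from the proof of Theorem~\ref{t:monoid}: a non-empty closed subsemigroup of the compact group $PU(n-1)$ contains a minimal ideal, which contains an idempotent, and $\mathrm{id}$ is the only idempotent of $PU(n-1)$; hence $\mathrm{id}\in\Gamma(y)$ and $\Gamma(y)$ is a monoid. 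Non-emptiness of $S$, on which this last argument rests, follows because $\Lambda_{CG}(\Gamma)$ is perfect, so there exist distinct $y_m\to y$ with each $\Gamma(y_m,y)$ non-empty, and compactness of $K$ yields a convergent sequence of Cartan extensions producing at least one $y$-Cartan limit.

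The step I expect to be the main obstacle is precisely this interplay between the Cartan-extension topology in $P(\bigwedge^2\Bbb{C}^{n+1})$ and the intrinsic topology of $Isom(y,y)$: one must verify that limits of the boundary data $y_m\to y$ and of the associated Cartan sequences genuinely yield maps satisfying the defining properties of a Cartan map for $(y,y,\Gamma)$, so that the limits land in $\Gamma(y,y)$ rather than in $Isom(y,y)$ only, and that the composition identity relating $T_\mu T_\nu$ to $\mu\star\nu$ survives passage to the limit.
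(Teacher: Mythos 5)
Your reduction rests on the claim that $\Gamma(y,y)$ is itself stable under $\star$ (composition), ``by the argument of Lemma~\ref{l:monoid} applied with $x=y$''; from this you deduce at once that every finite composition of $y$-Cartan limits lies in $\Gamma(y,y)$. That claim is precisely the step that fails, and it is the reason the notion of $y$-Cartan limit exists at all. The proof of Lemma~\ref{l:monoid} composes two Cartan sequences: the inner images $\gamma_{2m}(w_m)$ accumulate on $\gamma_2(\ell)\subset x^\bot$, and one then applies the known dynamics of $(\gamma_{1m})$ near those accumulation points. When $x\neq y$ this works because $\langle x,y\rangle\neq 0$, so $y\notin x^\bot$ and the intermediate limit $w_1$ can never be $y$; the convergence $\gamma_{1m}\to x$ off $y^\bot$ and part (b) of Proposition~\ref{p:llemaleq} then control $\gamma_{1m}(w_1)$. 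When $x=y$ the point $y$ is isotropic, so $y\in y^\bot$: the intermediate limit can be $y$ itself, which is exactly the locus where no information about $\mathcal{D}_{(\gamma_{1m})}$ is available, and the two-step argument gives nothing. If $\Gamma(y,y)$ were already known to be a closed subsemigroup of $Isom(y,y)$, Lemma~\ref{l:prin} and Theorem~\ref{t:yyy} would be nearly immediate and the whole apparatus of $y$-Cartan limits would be redundant.

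The paper's proof avoids this by exploiting the defining data of a $y$-Cartan limit: each $\mu_i$ is approximated by maps $\mu_{mi}\in\Gamma(y_{mi},y)$ with \emph{sources} $y_{mi}\neq y$, so $y\notin y_{mi}^\bot$ and the products $\gamma_{jm1}\gamma_{jm2}$ of the associated Cartan sequences can be controlled (they converge to $y$ off $y_{m2}^\bot$); only after a diagonal extraction in $(j,m)$ does one obtain a single sequence tending simply to infinity whose limit map is identified with $\mu_1\circ\mu_2$, hence lies in $\Gamma(y,y)$. Your auxiliary assertion that $T_\mu T_\nu$ is a Cartan extension of $\mu\star\nu$ has the same problem in disguise: the Cartan decomposition of a product $\gamma_{1m}\gamma_{2m}$ is not the product of the Cartan decompositions, so continuity of $\star$ for the Cartan-extension convergence is not a formal identity but must be extracted by the same diagonal argument. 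To repair your proof you would need to replace the appeal to ``Lemma~\ref{l:monoid} with $x=y$'' by the approximation-and-diagonalization scheme above; the remaining ingredients you list (closedness of $\Gamma(y,y)$, non-emptiness via compactness of $K$, the idempotent argument in $PU(n-1)$) are fine.
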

\begin{proof} First, let us show that $\Gamma(y)$ is non-empty. Let $(y_m)\subset \Lambda_{CG}(\Gamma)$ be a  sequence of distinct elements such that $y_m \xymatrix{ \ar[r]_{m \rightarrow  \infty}&} y$. For  each $m\in \Bbb{N}$ let  $\mu_m\in \Gamma(y_m,x)$ and   $(\gamma_{jm})\subset \Gamma$  a Cartan sequence associated to $\mu_m$ such that:
 
\begin{enumerate}
\item The Cartan decomposition of $\gamma_{jm}$ is given by $\gamma_{jm}=k^+_{jm}A_{jm}k^-_{jm}$.

\item There are 
$k^+_m, k^-_m \in K$  
such that  
\[
d(k^\pm_{jm}, k^\pm_m)< 2^{-j}\\
\]

\end{enumerate}

Thus $T_m=k^+_mH k^-_m $ is a Cartan  extension of $\mu_m$. Since $K$ is compact   there are $k_+,k_-\in U(n+1)\cap U(1,n)$  and a strictly increasing sequence $(n_m)\subset \Bbb{N}$ such that 
\[
d(k_{n_m}^\pm, k_\pm)< 2^{-n_m}.
\]
Therefore, 
$$
T_{n_m}
\xymatrix{ \ar[r]_{m \rightarrow  \infty}&} 
 k_+Hk_-=T. 
$$
Clearly, $T(y^\bot (y))=y^\bot (y)$. Now, let $(l_m)\subset \Bbb{N}$ be a strictly increasing sequence  such that 
$(\gamma_{l_m n_m})$ is a sequence tending simply to infinity.   Then is straightforward that $(\gamma_{l_m n_m})$ is a  Cartan sequence 
associated to $T\mid_{y^\bot (y)}$, which concludes this part of the proof.\\

In order to conclude the proof, it will suffice to show that the product of two $y$-Cartan limits is in $\Gamma(y,y)$. 
Let $\mu_1,\mu_2\in Isom(y,y)$ be  $y$-Cartan limit maps, then for each $i\in \{1,2\}$ there are sequences $(y_{mi})\subset \Lambda_{CG}(\Gamma)$ and  $(\mu_{mi})\in \mathfrak{C}(\Gamma)$ such that $(y_{mi})$ is a sequence of distinct elements converging to $y$,  $\mu_{mi}\in \Gamma(y_{mi},y)$ and $\mu_{mi}\xymatrix{ \ar[r]_{m \rightarrow
\infty}&}  \mu_i $. For each $m\in \Bbb{N}$ and $i\in \{1,2\}$ let $(\gamma_{jmi})_{j\in \Bbb{N}}$ be a Cartan sequence associated to
$\mu_{mi}$ satisfying:
\begin{enumerate}
\item The Cartan decomposition of $\gamma_{jmi}$ is $k^+_{jmi}A_{jmi}k^-_{jmi}$.
\item There are $k^+_{mi},k^-_{mi}\in K$ such that
\[
d(k^\pm_{jmi}, k^\pm_{mi})<2^{-j}.
\]
\item There are $k^+_{i},k^-_{i}\in K$ such that
\[
d(k^\pm_{mi}, k^\pm_{i})<2^{-m}.
\]
\item  The projective transformation, $T_{mi}=k^+_{mi}Hk^-_{mi}$, is a Cartan extension of $\mu_{mi}$.
\item  The projective transformation, $T_i=k^+_{i}Hk^-_{i}$, is a Cartan extension of $\mu_{i}$.
\end{enumerate}
For each $m\in \Bbb{N}$, let us consider the sequence $(\tau_{jm}=\gamma_{jm1}\gamma_{jm2})_{j\in\Bbb{N}}$, then it is not hard to show
$\tau_{jm}\xymatrix{ \ar[r]_{j \rightarrow
\infty}&}  y$ uniformly on $\Bbb{P}^n\setminus y_{m2}^\bot$ and $\tau_{jm}^{-1}\xymatrix{ \ar[r]_{j \rightarrow
\infty}&}  y_{m2}$ uniformly on $\Bbb{P}^n\setminus y^\bot$. Let $n_m\subset \Bbb{N} $ be a strictly increasing sequence  such that $(\tau_{n_m m})$ is a sequence tending simply to infinite, then $(\gamma_{n_m mi})$ is a Cartan sequence associated to $\mu_i$ and 

\[
\begin{array}{l}
\tau_{n_m m}\xymatrix{ \ar[r]_{m \rightarrow \infty}&}  y \textrm{ uniformly on } \Bbb{P}^n\setminus y^\bot\\
\tau_{n_m m}^{-1}\xymatrix{ \ar[r]_{m \rightarrow \infty}&}  y \textrm{ uniformly on } \Bbb{P}^n\setminus y^\bot.
\end{array}
\]

 By Proposition \ref{p:llemaleq}, there is   $\nu\in \Gamma(y,y)$ such that for every $\ell\in y^\bot (y)$ and $w\in \ell\setminus\{y\}$ we have
\[
\begin{array}{l}
\mathcal{D}_{(\gamma_{n_m m1}\gamma_{n_m m2})}(w)=\nu(\ell)\\
\mathcal{D}_{(\gamma_{n_m m 2}^{-1}\gamma_{n_m m1}^{-1})}(w)=\nu^{-1}(\ell).
\end{array}
\]

Let $\ell \in y^\bot(y)$, $w\in \ell\setminus \{y\}$ and $(w_m)\subset \Bbb{P}^n_\Bbb{C}$ such that $w_m \xymatrix{ \ar[r]_{m \rightarrow  \infty}&} w $, also let us assume that there are $w_1,w_2\in \Bbb{P}^2_{\Bbb{C}}$ such that 
\[
\begin{array}{l}
\gamma_{n_m m2}(w_m)\xymatrix{ \ar[r]_{m \rightarrow  \infty}&} w_1\\
\gamma_{n_m m1}(\gamma_{n_m m2}(w_m))\xymatrix{ \ar[r]_{m \rightarrow  \infty}&} w_2.
\end{array}
\]
Observe that in case  $w_1\notin y^\bot $, then $w_2=y$. On the other hand,  when $w_1\in y^\bot $, we have  $w_1\in \mu_2(\ell)$. 
Consequently, $w_2\in \mu_1(\mu_2(\ell))$, therefore $\nu=\mu_1\circ\mu_2$.
\end{proof}

We have the following straightforward results.

\begin{corollary}\label{r:carcom} 
The space $ \mathfrak{C}(\Gamma)$ is sequentially compact.
\end{corollary}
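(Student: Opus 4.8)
The plan is to exploit the fact that every element of $\mathfrak{C}(\Gamma)$ admits a Cartan extension of the form $k^+Hk^-$ with $k^+,k^-\in K=PU(n+1)\cap PU(1,n)$, together with the compactness of $K$; sequential compactness will then follow from a diagonal extraction of the same type used in the first part of the proof of Lemma \ref{l:prin}. First I would take an arbitrary sequence $(\mu_m)\subset\mathfrak{C}(\Gamma)$, writing $\mu_m\in\Gamma(y_m,x_m)$ with $x_m,y_m\in\Lambda_{CG}(\Gamma)$, and attach to each $\mu_m$ a Cartan sequence $(\gamma_{jm})_{j\in\Bbb{N}}\subset\Gamma$ whose Cartan decomposition is $\gamma_{jm}=k^+_{jm}A_{jm}k^-_{jm}$ and whose compact factors satisfy $d(k^\pm_{jm},k^\pm_m)<2^{-j}$ for suitable $k^\pm_m\in K$. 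The resulting Cartan extension is $T_m=k^+_mHk^-_m$, a projective transformation of $P(\bigwedge^2\Bbb{C}^{n+1})$.

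Next, using that $K$ is compact, I would pass to a subsequence (and relabel) so that $k^+_m\to k_+$ and $k^-_m\to k_-$ in $K$. Continuity of matrix multiplication and of projectivization then gives $T_m\to T_0:=k_+Hk_-$ as projective maps. By the formulas established in the proof of Proposition \ref{p:llemaleq} the endpoints are $x_m=k^+_m(e_1)$ and $y_m=(k^-_m)^{-1}(e_{n+1})$, so that $x_m\to x_0:=k_+(e_1)$ and $y_m\to y_0:=k_-^{-1}(e_{n+1})$; since $\Lambda_{CG}(\Gamma)$ is closed, $x_0,y_0\in\Lambda_{CG}(\Gamma)$. Setting $\mu_0:=T_0\mid_{y_0^\bot(y_0)}$, it remains only to certify that $\mu_0$ is a genuine Cartan map, i.e. $\mu_0\in\Gamma(y_0,x_0)\subset\mathfrak{C}(\Gamma)$; once this is done, the convergence $T_m\to T_0$ is precisely the statement that $\mu_m\to\mu_0$ in $\mathfrak{C}(\Gamma)$, and the proof is complete.

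The certification of $\mu_0$ is the main obstacle, and I would dispatch it by the diagonal argument of Lemma \ref{l:prin}. Choose a strictly increasing sequence $(n_m)$ with $d(k^\pm_{n_m},k_\pm)<2^{-m}$; since each $(\gamma_{j,n_m})_j$ tends simply to infinity, one can then select indices $l_m$ so that the diagonal sequence $(\gamma_{l_m,n_m})$ again tends simply to infinity and has compact factors converging to $k_\pm$. The delicate point is to control the two limit directions at once so that $(\gamma_{l_m,n_m})$ converges uniformly to $x_0$ on compact subsets of $\Bbb{P}^n_\Bbb{C}\setminus y_0^\bot$ while its inverse converges uniformly to $y_0$ on compact subsets of $\Bbb{P}^n_\Bbb{C}\setminus x_0^\bot$. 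Granting this, Proposition \ref{p:llemaleq} applied to $(\gamma_{l_m,n_m})$ yields $\mathcal{D}_{(\gamma_{l_m,n_m})}(w)=T_0(\ell)$ for every $\ell\in y_0^\bot(y_0)$ and $w\in\ell\setminus\{y_0\}$, which exhibits $\mu_0$ as the Cartan map associated to $(\gamma_{l_m,n_m})$ and places it in $\Gamma(y_0,x_0)$. This completes the extraction and shows that every sequence in $\mathfrak{C}(\Gamma)$ has a convergent subsequence, hence $\mathfrak{C}(\Gamma)$ is sequentially compact.
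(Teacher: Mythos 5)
Your argument is correct and is essentially the one the paper intends: the authors state this corollary without proof as a ``straightforward result,'' and the diagonal extraction you carry out --- compactness of $K$ applied to the compact factors of Cartan extensions, followed by re-selecting a sequence $(\gamma_{l_m,n_m})$ tending simply to infinity and invoking Proposition \ref{p:llemaleq} to certify that the limit is a genuine Cartan map --- is precisely the mechanism already used in the paper's proofs of the closedness of $\Gamma(y,x)$ and of Lemma \ref{l:prin}. The ``delicate point'' you flag is in fact already covered by Proposition \ref{p:llemaleq}, since for a sequence tending simply to infinity the uniform convergence on compact subsets of the complements of $y_0^\bot$ and $x_0^\bot$ follows from the pseudo-projective limits having images $x_0=k_+(e_1)$ and $y_0=k_-^{-1}(e_{n+1})$.
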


\begin{corollary}\label{r:carcom} 
Let $(y_m)\subset \Lambda_{CG}(\Gamma) $ be a sequence of distinct elements converging to $y$, the there is a subsequence $(z_m)\subset (y_m)$, such that the identity $I_m\in \Gamma(z_m)$ converges to the identity $I\in\Gamma(y,y). $
\end{corollary}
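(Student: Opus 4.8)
The plan is to combine the sequential compactness of $\mathfrak{C}(\Gamma)$ established in the preceding corollary with a direct identification of the limit. Since $I_m\in\Gamma(z_m)\subset\Gamma(z_m,z_m)\subset\mathfrak{C}(\Gamma)$ by Lemma \ref{l:prin}, sequential compactness lets me pass to a subsequence, still denoted $(z_m)$, and produce $\mu_0\in\mathfrak{C}(\Gamma)$ with $I_m\to\mu_0$. By the definition of convergence in $\mathfrak{C}(\Gamma)$ this means there are Cartan extensions $T_m=k_m^+Hk_m^-$ of $I_m$ and a Cartan extension $T_0$ of $\mu_0$, all regarded as projective transformations of $P(\bigwedge^2\Bbb{C}^{n+1})$, such that $T_m\to T_0$. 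Since $H$ and the compact factors $k_m^\pm$ are invertible, every $T_m$ and $T_0$ is a genuine (invertible) projective transformation, so by Proposition \ref{p:completes} the convergence $T_m\to T_0$ is uniform on all of $P(\bigwedge^2\Bbb{C}^{n+1})$. The whole problem then reduces to showing that $\mu_0$ is the identity map of $y^\bot(y)$, which is precisely the identity $I\in\Gamma(y,y)$.

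First I would pin down the base point of $\mu_0$. Using compactness of $K=PU(n+1)\cap PU(1,n)$, I pass to a further subsequence so that $k_m^\pm\to\tilde k^\pm\in K$; then $T_m\to\tilde k^+H\tilde k^-$, and by uniqueness of limits $T_0=\tilde k^+H\tilde k^-$. Because $I_m\in\Gamma(z_m,z_m)$, the (common) domain and codomain point read off its Cartan extension via Proposition \ref{p:llemaleq} is $k_m^+(e_1)=z_m=(k_m^-)^{-1}(e_{n+1})$. Letting $m\to\infty$ and using $z_m\to y$ together with the continuity of the $PU(1,n)$-action gives $\tilde k^+(e_1)=y=(\tilde k^-)^{-1}(e_{n+1})$, so $T_0$ is a Cartan extension of a map $\mu_0:y^\bot(y)\to y^\bot(y)$; in particular $\mu_0\in\Gamma(y,y)$.

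Next I would show $\mu_0=\mathrm{id}$. Fix $\ell\in y^\bot(y)$, that is, a line through $y$ contained in $y^\bot$. Since $PU(1,n)$ acts transitively on $\partial\Bbb{H}^n_\Bbb{C}$ and $z_m\to y$, a local section of the orbit map lets me choose $g_m\in PU(1,n)$ with $g_m(y)=z_m$ and $g_m\to\mathrm{id}$; as $g_m$ preserves the Hermitian form it carries $y^\bot$ onto $z_m^\bot$, so $\ell_m:=g_m(\ell)\in z_m^\bot(z_m)$ and $\ell_m\to\ell$ in $Gr(1,n)\subset P(\bigwedge^2\Bbb{C}^{n+1})$. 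Because $I_m$ is the identity of $z_m^\bot(z_m)$ and $T_m$ restricts to $I_m$ there, we have $T_m(\ell_m)=\ell_m$. Passing to the limit and using the uniform convergence $T_m\to T_0$ yields
\[
T_0(\ell)=\lim_{m\to\infty}T_m(\ell_m)=\lim_{m\to\infty}\ell_m=\ell .
\]
As $\ell\in y^\bot(y)$ was arbitrary, $\mu_0=T_0\mid_{y^\bot(y)}=\mathrm{id}$, which is the identity $I\in\Gamma(y,y)$, completing the proof.

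The step I expect to be the main obstacle is the approximation in the last paragraph: one must guarantee both that every $\ell\in y^\bot(y)$ is a limit of lines $\ell_m\in z_m^\bot(z_m)$ and that $T_m(\ell_m)\to T_0(\ell)$. The first requires the continuous dependence of the flag data $(z_m,z_m^\bot)$ on $z_m$, which I handle through the elements $g_m\to\mathrm{id}$; the second requires that the $T_m$ be honest projective transformations converging uniformly rather than degenerating to a pseudo-projective map. Verifying that the $g_m$ may indeed be chosen to tend to the identity, and that no loss of invertibility occurs in the limit $T_0=\tilde k^+H\tilde k^-$, are the points that will need the most care.
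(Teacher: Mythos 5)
Your argument is correct and is exactly the filling-in of details the paper intends when it labels this corollary ``straightforward'' (no proof is given in the text): you invoke the sequential compactness of $\mathfrak{C}(\Gamma)$, the fact that a Cartan extension $T_m=k_m^+Hk_m^-$ restricts to the Cartan map $I_m$ on $z_m^\bot(z_m)$, and the continuity of $z\mapsto(z,z^\bot)$ to identify the limit as the identity of $\Gamma(y,y)$. The two points you flag as delicate do go through as you describe: $\tilde k^+H\tilde k^-$ is invertible so Proposition \ref{p:completes} gives uniform convergence with empty kernel, and the elements $g_m\to\mathrm{id}$ with $g_m(y)=z_m$ exist by taking a local continuous section of the transitive action of $PU(1,n)$ on $\partial\Bbb{H}^n_{\Bbb{C}}$.
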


Before we end this section, let us show that indeed $\Gamma(y)$ is a group.

\begin{theorem}\label{t:yyy}
Let $\Gamma\subset PU(1,n)$ be a non-elementary discrete subgroup and $y\in \Lambda_{CG}(\Gamma)$, then  
$\Gamma(y)=\Gamma(y,y)$. In particular $\Gamma(y)$ is a compact Lie subgroup naturally embedded in $PU(n-1)$.
\end{theorem}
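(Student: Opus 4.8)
The plan is to prove the two inclusions separately, since Lemma \ref{l:prin} already gives both $\Gamma(y)\subseteq\Gamma(y,y)$ and the fact that $\Gamma(y)$ is a closed monoid. Under the identification of $(Isom(y,y),\star)$ with $(PU(n-1),\circ)$ the operation $\star$ is plain composition, so $\Gamma(y)$ is a closed submonoid of the compact group $PU(n-1)$. First I would upgrade this monoid to a group by reproducing the semigroup argument of Theorem \ref{t:monoid}: for an arbitrary $a\in\Gamma(y)$ the closure $A=\overline{\{a,a^2,a^3,\dots\}}$ is a compact abelian subsemigroup, hence carries a minimal two-sided ideal $\mathcal I$; because $\mathrm{id}$ is the unique idempotent of $PU(n-1)$ and every minimal ideal of a compact semigroup meets the idempotents, we get $\mathrm{id}\in\mathcal I$, and minimality forces $\mathcal I$ to be a group with $\mathcal I=A$. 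Thus $a^{-1}\in A\subseteq\Gamma(y)$ and $\Gamma(y)$ is a group; being closed in the compact Lie group $PU(n-1)$, Cartan's closed-subgroup theorem makes it a compact embedded Lie subgroup, which is the ``in particular'' claim.

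It remains to prove $\Gamma(y,y)\subseteq\Gamma(y)$, which carries the real content. Given $\mu\in\Gamma(y,y)$, I would fix a Cartan sequence $(\gamma_j)\subset\Gamma$ associated to it, with decomposition $\gamma_j=k^+_jA_jk^-_j$ and $k^\pm_j\to k^\pm\in K$, so that the Cartan extension $T=k^+Hk^-$ satisfies $T|_{y^\bot(y)}=\mu$; here both the attracting and the repelling data degenerate to $y$, as $Im(\tau)=Im(\vartheta)=y$. The idea is to realize $\mu$ as a $y$-Cartan limit by displacing the repelling point off the diagonal. Since $\Gamma$ is non-elementary, $y$ is not isolated in $\Lambda_{CG}(\Gamma)$, so I would pick distinct $y_m\in\Lambda_{CG}(\Gamma)$ with $y_m\to y$. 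For distinct endpoints Theorem \ref{t:monoid} guarantees that each $\Gamma(y_m,y)$ is a nonempty compact Lie group of Cartan maps, and Lemma \ref{l:2tran} together with Proposition \ref{p:llemaleq} produces such maps with controlled attracting and repelling behaviour.

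The main obstacle is that a blind application of the sequential compactness of $\mathfrak C(\Gamma)$ only returns \emph{some} element of $\Gamma(y,y)$ as the limit of a sequence of off-diagonal maps, whereas I must reach the prescribed $\mu$. To steer the limit I would exploit that the Cartan extension $k^+Hk^-$ depends only on the limiting compact factors: perturbing $k^-$ to $k^-_m$ with $(k^-_m)^{-1}(e_{n+1})=y_m$ and $k^-_m\to k^-$ yields candidate extensions $T_m:=k^+Hk^-_m\to k^+Hk^-=T$. The delicate point is to realize these $T_m$ by genuine members $\mu_m\in\Gamma(y_m,y)$; here the corollary asserting that the identities $I_m\in\Gamma(y_m)$ converge to the identity of $\Gamma(y,y)$ anchors the family at $\mathrm{id}$, and the group structure of each $\Gamma(y_m,y)$ lets me transport this selection so that the chosen $\mu_m$ have Cartan extensions tending to $T$ rather than to an arbitrary limit. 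Once $T_m\to T$ with each $T_m$ a Cartan extension of $\mu_m\in\Gamma(y_m,y)$, the definition exhibits $\mu$ as a $y$-Cartan limit, so $\mu\in\Gamma(y)$. Combining this with the first paragraph gives $\Gamma(y)=\Gamma(y,y)$ together with the compact Lie subgroup statement.
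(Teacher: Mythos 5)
Your skeleton is right: Lemma \ref{l:prin} gives $\Gamma(y)\subseteq\Gamma(y,y)$ and the closed-monoid structure, the compact-semigroup/minimal-ideal argument of Theorem \ref{t:monoid} upgrades a closed submonoid of $PU(n-1)$ to a group, and the whole problem is the inclusion $\Gamma(y,y)\subseteq\Gamma(y)$, i.e.\ exhibiting an arbitrary $\mu\in\Gamma(y,y)$ as a $y$-Cartan limit. But at exactly that step your mechanism has a genuine gap. You propose to perturb the compact factor $k^-$ to some $k^-_m$ with $(k^-_m)^{-1}(e_{n+1})=y_m$ and to take $T_m=k^+Hk^-_m$ as the candidate Cartan extensions. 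A Cartan extension, however, is not something you may prescribe: by definition it must arise as the limit of the compact factors of the Cartan decompositions of an actual sequence $(\gamma_{jm})\subset\Gamma$ tending simply to infinity, and there is no reason a discrete group $\Gamma$ contains elements whose Cartan decompositions have the compact factors you have chosen. Your own sentence concedes this (``the delicate point is to realize these $T_m$ by genuine members $\mu_m\in\Gamma(y_m,y)$''), but the proposed resolution --- that the identities $I_m\in\Gamma(y_m,y_m)$ converging to the identity of $\Gamma(y,y)$ ``anchor'' the family and that ``the group structure of $\Gamma(y_m,y)$ lets me transport this selection'' --- is a restatement of the goal, not a construction: $I_m$ lives in $\Gamma(y_m,y_m)$, not in $\Gamma(y_m,y)$, and no map from the one to the other is produced.

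The missing idea, which is how the paper closes this gap, is to use the operation $\star$ to manufacture the off-diagonal maps out of data you actually possess. Given $\mu\in\Gamma(y,y)$ with Cartan sequence $(\gamma_{j,-1})$ and the identities $I_m\in\Gamma(y_m,y_m)$ with Cartan sequences $(\gamma_{jm})$ (these exist and converge to the identity of $\Gamma(y,y)$ by the corollary on convergence of identities), one forms $\nu_m=\mu\star I_m:y_m^\bot(y_m)\to y^\bot(y)$. This \emph{is} realized by genuine elements of $\Gamma$, namely the products $\gamma_{j,-1}\gamma_{jm}$, and the argument of Lemma \ref{l:monoid} (tracking where $\gamma_{jm}(w_j)$ accumulates and then applying $\gamma_{j,-1}$) shows $\nu_m\in\Gamma(y_m,y)$. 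Sequential compactness of $\mathfrak{C}(\Gamma)$ then lets you assume the Cartan extensions $S_m$ of $\nu_m$ converge to a Cartan extension $S_0$ of some $\nu_0\in\Gamma(y)$, and a final diagonal computation with $\sigma_m=\gamma_{m,-1}\gamma_{mm}$ identifies $\nu_0=\mu\star I=\mu$. So your choice of target ($T_m\to T$ with $T|_{y^\bot(y)}=\mu$) is correct, but the elements $\mu_m$ must be built by multiplying Cartan sequences inside $\Gamma$ rather than by adjusting compact factors; without that construction the proof does not go through.
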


\begin{proof} 
In order to conclude the proof, it will suffice to show that every $\mu_{-1}\in  \Gamma(y,y)$ is indeed  a  $y$-Cartan limit. 
Let $(y_{m})\subset \Lambda_{CG}(\Gamma)$  be a sequence of distinct elements converging to $y$, such that if 
$\mu_{m}\in \Gamma(y_{m},y_m)$ is  the identity of  $(Bihol(y_m,y_m),\star)$. Then   $\mu_m\xymatrix{ \ar[r]_{m \rightarrow \infty}&}  	\mu_0$, where $\mu_0$ is the identity of $(Bihol(y,y),\star)$. For each $m\in \Bbb{N}$ let us define
\[
\begin{array}{l}
\mu_{-1}\star \mu_m:y_m^\bot(y_m)\rightarrow y^\bot(y)\\
\mu_{-1}(Span(\{\mu_m(\ell)\cap y^\bot(y),y\}),
\end{array}
\]
 as in the proof of Lemma \ref{l:monoid}, one can easily show that  $\mu\star\mu_m\in \Gamma(y_m,y)$. For each $m\in \Bbb{N}$, let $S_m$ be a Cartan extension of 
$\nu_m=\mu\star\mu_m$. By Corollary \ref{r:carcom}, we can assume that there is $S_0\in \mathfrak{C}(\Gamma)$ such that 
$S_m\xymatrix{ \ar[r]_{m \rightarrow \infty}&}  S_0$ as projective transformations. Moreover, we can say that there is 
$\nu_0\in \Gamma(y)$ such that $S_0$ is the Cartan extension of $\nu_0$. Consequently,
$\nu_m \xymatrix{ \ar[r]_{m \rightarrow \infty}&} \nu_0$.  For each element $m\in \Bbb{N}\cup\{0,-1\}$,  let 
$(\gamma_{jm})_{j\in \Bbb{N}}$ be a Cartan sequence associated to $\mu_{m}$ satisfying:
\begin{enumerate}
\item The Cartan decompositions of $\gamma_{jm}$ and $\gamma_{j,-1}\gamma_{jm}$, respectively, are $k^+_{jm}A_{jm}k^-_{jm}$
and  $r^+_{jm}B_{jm}r^-_{jm}$.

\item There are $k^+_{m},k^-_{m},r^+_{m},r^-_{m}\in K$ such that
\[
max\{
d(k^\pm_{jm}, k^\pm_{m}),
d(r^\pm_{jm}, r^\pm_{m})\}<2^{-j}.
\]
\item There are $k^+_0,k^-_0,r^+_0,r^-_0\in K$ such that
\[
max\{d(k^\pm_{m}, k^\pm),
d(r^\pm_{m}, r^\pm)\}<2^{-m}.
\]
\item  We have  $T_{m}=k^+_{m}Hk^-_{m}$ and $S_m=r^+_{m}Hr^-_{m}$.
\end{enumerate}

Let us consider $\sigma_m=\gamma_{m,-1}\gamma_{mm}$, then it is not hard to show that 
$\sigma_m \xymatrix{ \ar[r]_{m \rightarrow \infty}&} y$, uniformly on compact sets of 
$\Bbb{P}^n_\Bbb{C}\setminus y^\bot.$ Let $\ell \in y^\bot(y)$, $w\in \ell\setminus \{y\}$ and $(w_m)\subset \Bbb{P}^n_\Bbb{C}$ such
that $w_m \xymatrix{ \ar[r]_{m \rightarrow  \infty}&} w $. Also, let us assume that there are $w_1,w_2\in \Bbb{P}^2_{\Bbb{C}}$ such 
that 
\[
\begin{array}{l}
\gamma_{mm}(w_m)\xymatrix{ \ar[r]_{m \rightarrow  \infty}&} w_1\\
\gamma_{m,-1}(\gamma_{mm}(w_m))\xymatrix{ \ar[r]_{m \rightarrow  \infty}&} w_2.
\end{array}
\]
In case  $w_1\in \ell$, then  $w_2\in \mu_{-1}(\ell) $. Therefore, $\nu_0=\mu_{-1}\star\mu_0$.
\end{proof}

\section{Proof of the main theorem}\label{s:main}
Le us consider the following result:

\begin{theorem} \label{t:pne}
Let $\Gamma\subset PU(1,n)$ be a non-elementary discrete subgroup, then
\begin{enumerate}
\item The Kulkarni discontinuity region $\Omega_{Kul}(\Gamma)$ is the largest open set on  which $\Gamma$ acts properly and 
discontinuously and agrees with $Eq(\Gamma)$.

\item The Kulkarni limit set  of $\Gamma$ can be described as the union of hyperplanes tangent to $\partial \Bbb{H}^n_\Bbb{C}$ at  points in the 
Chen-Greenberg limit set of $\Gamma$, {\it i.e.},

\[
\Lambda_{Kul}(\Gamma)=\bigcup_{p\in \Lambda_{CG}(\Gamma)}p^\bot.
\]
\end{enumerate}
\end{theorem}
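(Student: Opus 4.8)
The plan is to reduce everything to the control-group machinery of Section \ref{s:control}, the lambda lemma (Proposition \ref{p:llemaleq}), and the soft properties of the Kulkarni sets (Proposition \ref{p:pkg}). I first dispose of the inclusion $\Lambda_{Kul}(\Gamma)\subseteq \bigcup_{p\in\Lambda_{CG}(\Gamma)}p^\bot$, which is essentially formal: by Corollary \ref{c:eqpu} the right-hand side is exactly $\Bbb{P}^n_\Bbb{C}\setminus Eq(\Gamma)$, while Proposition \ref{p:pkg} gives $Eq(\Gamma)\subseteq \Omega_{Kul}(\Gamma)$; taking complements yields the inclusion. Thus the whole content of part (2) is the reverse inclusion $\bigcup_{p}p^\bot\subseteq \Lambda_{Kul}(\Gamma)$, and part (1) will then follow quickly, except for the maximality assertion.

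For the reverse inclusion I fix $p\in\Lambda_{CG}(\Gamma)$ and $z\in p^\bot$ and show $z\in\Lambda_{Kul}(\Gamma)$. If $z\in\Lambda(\Gamma)$ or $z=p$ there is nothing to prove, since $\Lambda(\Gamma)\cup\Lambda_{CG}(\Gamma)\subseteq\Lambda_{Kul}(\Gamma)$. So I assume $z\in p^\bot\setminus(\{p\}\cup\Lambda(\Gamma))$ and set $\ell=\overleftrightarrow{p,z}\in p^\bot(p)$. Here the control group enters decisively: by Theorem \ref{t:yyy} the set $\Gamma(p)=\Gamma(p,p)$ is a group, so the identity of $(Isom(p,p),\star)$ belongs to it. Hence there is a Cartan sequence $(\gamma_m)\subset\Gamma$, tending simply to infinity, associated to $\mathrm{id}$, which means in particular that $\mathcal{D}_{(\gamma_m)}(w)=\mathrm{id}(\ell)=\ell$ for every $w\in\ell$. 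Taking $w=z$ and passing to a subsequence I obtain $w_m\to z$ with $\gamma_m(w_m)\to z$; since $\Lambda(\Gamma)$ is closed with empty interior I may perturb the $w_m$ slightly, keeping both limits, so that $w_m\notin\Lambda(\Gamma)$. Then $K=\{w_m : m\in\Bbb{N}\}\cup\{z\}$ is a compact subset of $\Bbb{P}^n_\Bbb{C}\setminus\Lambda(\Gamma)$ and $z$ is a cluster point of $\Gamma K$, so $z\in L_2(\Gamma)\subseteq\Lambda_{Kul}(\Gamma)$. Letting $z$ range over $p^\bot$ and $p$ over $\Lambda_{CG}(\Gamma)$ proves $\bigcup_p p^\bot\subseteq\Lambda_{Kul}(\Gamma)$, completing part (2).

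Part (1) now splits into three points. The equality $\Omega_{Kul}(\Gamma)=Eq(\Gamma)$ is immediate from part (2) and Corollary \ref{c:eqpu} after taking complements, and $\Gamma$ acts properly discontinuously on $\Omega_{Kul}(\Gamma)$ by Proposition \ref{p:pkg}. It remains to prove maximality: if $\Omega$ is any open set on which $\Gamma$ acts properly discontinuously, then $\Omega\subseteq\Omega_{Kul}(\Gamma)$. Suppose not; then $\Omega$ meets $\Lambda_{Kul}(\Gamma)=\bigcup_p p^\bot$, so $\Omega\cap p^\bot$ is a non-empty open subset of $p^\bot$ for some $p\in\Lambda_{CG}(\Gamma)$ and hence contains a point $z\neq p$. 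Running the construction of the previous paragraph with $\ell=\overleftrightarrow{p,z}$ and the identity Cartan sequence $(\gamma_m)$, I again obtain $w_m\to z$ and $\gamma_m(w_m)\to z$. Choosing a compact neighbourhood $K\subset\Omega$ of $z$, for all large $m$ one has $w_m\in K$ and $\gamma_m(w_m)\in K$, so $\gamma_m K\cap K\neq\emptyset$; as the $\gamma_m$ are pairwise distinct (because $\lambda(\gamma_m)\to\infty$), the set $\{\gamma\in\Gamma : \gamma K\cap K\neq\emptyset\}$ is infinite, contradicting proper discontinuity. Hence $\Omega\cap\Lambda_{Kul}(\Gamma)=\emptyset$, i.e. $\Omega\subseteq\Omega_{Kul}(\Gamma)$.

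The crux of the argument, and the place where Sections \ref{s:lambda} and \ref{s:control} are indispensable, is the single input that the identity lies in $\Gamma(p,p)$ together with the precise description $\mathcal{D}_{(\gamma_m)}(w)=\ell$ of its Cartan sequence: this is what produces, for each $z\in p^\bot$, a sequence tending simply to infinity that sends points near $z$ back arbitrarily close to $z$. Everything else is soft. I therefore expect the main obstacle to be essentially bookkeeping: verifying that the approximating sequences $(w_m)$ can be taken in $\Bbb{P}^n_\Bbb{C}\setminus\Lambda(\Gamma)$ without destroying the two limits $w_m\to z$ and $\gamma_m(w_m)\to z$, and confirming that the identity of $(Isom(p,p),\star)$ genuinely arises from a Cartan sequence tending simply to infinity, so that its terms are pairwise distinct. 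The latter is exactly what Theorem \ref{t:yyy} guarantees.
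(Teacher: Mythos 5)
Your argument is correct and is essentially the paper's proof: both hinge on the single fact that $Id_{p^\bot(p)}\in\Gamma(p,p)$ admits a Cartan sequence $(\gamma_m)$ with $\mathcal{D}_{(\gamma_m)}(z)=\overleftrightarrow{p,z}$, which simultaneously forces $p^\bot\subset\Lambda_{Kul}(\Gamma)$ and defeats proper discontinuity on any open set meeting $p^\bot$ (you merely spell out the maximality step that the paper dismisses as ``similar arguments''). One small remark: your appeal to ``$\Lambda(\Gamma)$ has empty interior'' to perturb the $w_m$ is both unjustified at that point in the argument and unnecessary --- since $z\notin\Lambda(\Gamma)$ and $\Lambda(\Gamma)$ is closed, the $w_m$ already lie in $\Bbb{P}^n_\Bbb{C}\setminus\Lambda(\Gamma)$ for all large $m$, so you can simply discard finitely many terms.
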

\begin{proof} 
On the contrary, let  us assume that $Eq(\Gamma)\neq \Omega_{Kul}(\Gamma)$. By Proposition \ref{p:pkg} and Corollary \ref{c:eqpu}, we 
conclude that there is  $p\in \Lambda_{CG}(\Gamma)$ such that 
$p^\bot \not\subset \Lambda_{Kul}(\Gamma)$. Then there is $w_0\in p^\bot\setminus \{p\}$ such that $w_0\in \Omega_{Kul}(\Gamma)$. 
From Lemma \ref{l:prin}, we have  $Id_{p^\bot(p)}\in \Gamma(p,p)$, then  there is a  Cartan sequence  $(\gamma_m)\subset \Gamma$ 
associated to $Id_{p^\bot(p)}$. Consequently, $\overleftrightarrow{w_0,p}=\mathcal{D}_{(\gamma_m)}(w_0)\subset \Lambda_{kul}(\Gamma)$, 
which is a contradiction. Therefore, $\Omega_{Kul}(\Gamma)=Eq(\Gamma)$ and   
\[
\Lambda_{Kul}(\Gamma)=\bigcup_{p\in \Lambda_{CG}(\Gamma)}p^\bot.
\]
Through similar arguments, we can show that $\Omega_{Kul}(\Gamma)$ is the largest open set on which $\Gamma$ acts properly 
discontinuously.
\end{proof}

\subsection*{Proof of Theorem \ref{t:principal}}
In virtue of Theorem \ref{t:pne}, we only need to show the theorem when  $\Gamma$ is elementary. Consider the following cases:\\

Case 1.- The Chen-Greenberg limit set of $\Gamma$ is a single point. Let   $p\in \Lambda_{CG}(\Gamma)$  be the unique point, by Proposition   \ref{p:llemaleq}  and  arguments as  in  Theorem \ref{t:monoid},   we can ensure that  $Id_{p^\bot(p)}\in \Gamma(p,p)$. If there is  $w_0\in p^\bot \setminus \Lambda(\Gamma)$, let   $(\gamma_m)\subset \Gamma$ be a Cartan sequence  
associated to $Id_{p^\bot(p)}$. Thus  $\overleftrightarrow{w_0,p}=\mathcal{D}_{(\gamma_m)}(w_0)\subset p^\bot$, which proves the theorem in this case.\\

Case 2.- The Chen-Greenberg limit set of $\Gamma$ has exactly two points.
After conjugating with an element in $PU(1,n)$, if it is necessary, we can assume that $\{[e_1],[e_{n+1}]\}=\Lambda_{CG}(\Gamma)$. Let $\Gamma_0=Isot(\Gamma,[e_1])\cap Isot(\Gamma,[e_{n+1}])$, thus $\Gamma_0$ is a subgroup of $\Gamma$ with finite index, therefore $\Lambda_{Kul}(\Gamma_0)=\Lambda_{Kul}(\Gamma)$. Let $(\gamma_m)_{m\in \Bbb{N}}\subset \Gamma$ be sequence of distinct elements, then 
\begin{equation} \label{e:lox2}
\gamma_m=
\begin{bmatrix}
r_m c_m\\
& u_m\\
&&r_m^{-1} c_m
\end{bmatrix}
\end{equation}
where $c_m^2=det (U_m)$, $r_m\in \Bbb{R}^+$ and $U_{m}\in U(n-1)$. From Equation  \ref{e:lox2} we conclude $L_0(\Gamma_0)=L_1(\Gamma_0)$ and $L_2(\Gamma)=[e_1]^\bot \cup [e_{n+1}]^\bot$, which concludes the proof.
$\square$\\

As a corollary of the main theorem, we have the following result, compare with results in \cite{BN,CS}.

\begin{corollary}
Let $\Gamma\subset PU(1,n)$  be a discrete group such that $\Gamma$ is irreducible, then each connected component of 
$\Omega_{Kul}(\Gamma)$ is  a complete  Kobayashi hyperbolic space, pseudoconvex, a domain of holomorphy and a Stein Manifold. Moreover,
$\Lambda_{CG}(\Gamma)\subset \Lambda_{Kul}(\Gamma)$ is the unique minimal closed set for the action of $\Gamma$ on $\Bbb{P}^n_\Bbb{C}$. 
\end{corollary}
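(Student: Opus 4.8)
The plan is to feed the explicit description $\Omega_{Kul}(\Gamma)=\Bbb{P}^n_\Bbb{C}\setminus\bigcup_{p\in\Lambda_{CG}(\Gamma)}p^\bot$ furnished by Theorem \ref{t:principal} into two classical inputs: the solution of the Levi problem in $\Bbb{C}^n$, and the hyperbolicity of complements of hyperplanes in general position. First I would record the two structural consequences of irreducibility that drive everything. If $\Gamma$ were elementary, then $[e_1]$, or $\{[e_1],[e_{n+1}]\}$, would yield a $\Gamma$-invariant flag and $\Gamma$ would be reducible; hence irreducibility forces $\Lambda_{CG}(\Gamma)$ to be infinite. Moreover $Span(\Lambda_{CG}(\Gamma))$ is a $\Gamma$-invariant projective subspace, so by irreducibility $\Lambda_{CG}(\Gamma)$ spans $\Bbb{P}^n_\Bbb{C}$; in particular, for every $x\in\Bbb{P}^n_\Bbb{C}$ there is $p\in\Lambda_{CG}(\Gamma)$ with $x\notin p^\bot$, since otherwise $\Lambda_{CG}(\Gamma)\subset x^\bot$. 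Fixing one $p_0\in\Lambda_{CG}(\Gamma)$, every connected component $\Omega_0$ of $\Omega_{Kul}(\Gamma)$ lies in the affine chart $\Bbb{C}^n:=\Bbb{P}^n_\Bbb{C}\setminus p_0^\bot$, and there it is a component of the complement of the closed union of affine hyperplanes $q^\bot\cap\Bbb{C}^n$, for $q\in\Lambda_{CG}(\Gamma)\setminus\{p_0\}$.

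For the Stein-type properties I would produce a continuous plurisubharmonic exhaustion of $\Omega_0$ inside $\Bbb{C}^n$. Writing each hyperplane in the chart as a zero set $\{\ell_q=0\}$ of an affine function normalized so that $|\ell_q|$ measures Euclidean distance to $q^\bot$, the function $-\log|\ell_q|$ is pluriharmonic off the hyperplane, and $z\mapsto -\log\,\mathrm{dist}(z,\bigcup_q q^\bot)=\sup_q(-\log|\ell_q(z)|)$ is a supremum of pluriharmonic functions that is locally bounded above on $\Omega_0$, hence plurisubharmonic. Adding $|z|^2$ to control the end of $\Bbb{C}^n$ lying on $p_0^\bot$, one obtains a continuous plurisubharmonic exhaustion of $\Omega_0$. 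By the solution of the Levi problem in $\Bbb{C}^n$ this makes $\Omega_0$ pseudoconvex, hence a domain of holomorphy and a Stein manifold.

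For hyperbolicity I would use general position together with the hyperbolicity theorem of M.~Green for complements of hyperplanes. Since $\Lambda_{CG}(\Gamma)$ spans and is infinite (and, being the limit set of a non-elementary group, is not contained in any finite union of proper subspaces), one can select inductively $2n+1$ points of $\Lambda_{CG}(\Gamma)$ whose dual hyperplanes $p^\bot$ are in general position. By Green's theorem the complement of these $2n+1$ hyperplanes is complete hyperbolic and hyperbolically embedded in $\Bbb{P}^n_\Bbb{C}$. As $\Omega_{Kul}(\Gamma)$ is an open subset of this complement, and the Kobayashi pseudodistance increases under open inclusion, $\Omega_{Kul}(\Gamma)$ inherits hyperbolic embeddedness in $\Bbb{P}^n_\Bbb{C}$; being relatively compact in the compact manifold $\Bbb{P}^n_\Bbb{C}$, it is complete hyperbolic, and hence so is each component $\Omega_0$.

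Finally, for minimality I would first note $\Lambda_{CG}(\Gamma)\subset\Lambda_{Kul}(\Gamma)$, because $\langle p,p\rangle=0$ gives $p\in p^\bot$ for every $p\in\Lambda_{CG}(\Gamma)$. Let $C\subset\Bbb{P}^n_\Bbb{C}$ be any nonempty closed $\Gamma$-invariant set and pick $x\in C$. Given an arbitrary target $x_+\in\Lambda_{CG}(\Gamma)$, choose (using that not every $p^\bot$ contains $x$) a point $x_-\in\Lambda_{CG}(\Gamma)$ with $x\notin x_-^\bot$, and apply Lemma \ref{l:2tran} and Proposition \ref{p:llemaleq} to get a sequence $(\gamma_m)$ with $\gamma_m\to\tau$, $Im(\tau)=x_+$ and $Ker(\tau)=x_-^\bot$. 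Then $\gamma_m(x)\to x_+$, so $x_+\in C$; letting $x_+$ vary yields $\Lambda_{CG}(\Gamma)\subset C$. Since the $\Gamma$-action on $\Lambda_{CG}(\Gamma)$ is minimal, $\Lambda_{CG}(\Gamma)$ is the unique minimal closed $\Gamma$-invariant set. The main obstacle I anticipate is the hyperbolicity step: verifying that irreducibility really yields $2n+1$ hyperplanes in general position (equivalently, that $\Lambda_{CG}(\Gamma)$ is not trapped in a finite union of lower-dimensional subspaces), and transferring completeness from the auxiliary arrangement to $\Omega_{Kul}(\Gamma)$ through hyperbolic embeddedness and relative compactness.
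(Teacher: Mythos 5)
The paper gives no proof of this corollary at all --- it is stated as an immediate consequence of the main theorem, with a pointer to the references cited just above it --- so your proposal must be judged on its own merits. Two of your three blocks are sound. The pseudoconvexity/Stein block is correct: irreducibility forces $\Lambda_{CG}(\Gamma)$ to span, every component lies in the affine chart $\Bbb{P}^n_{\Bbb{C}}\setminus p_0^\bot$, and $\sup_q(-\log\vert\ell_q\vert)+\vert z\vert^2$ is a continuous plurisubharmonic exhaustion, giving pseudoconvexity and hence the Levi-problem conclusions. The minimality block is also correct, provided you make explicit the fact that two distinct null lines are never orthogonal for a form of signature $(1,n)$, so $\Lambda_{CG}(\Gamma)\cap x_-^\bot=\{x_-\}$ and the convergence $\gamma_m\to x_+$ off $x_-^\bot$ from Proposition \ref{p:llemaleq} really does move your chosen point of $C$ onto $x_+$.

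The hyperbolicity block contains a genuine error beyond the obstacle you flag. The inference ``hyperbolically embedded and relatively compact in a compact manifold, hence complete hyperbolic'' is false: for $n\ge 2$ the punctured ball $B^n\setminus\{0\}$ is relatively compact and hyperbolically embedded in the ball of radius $2$ (its Kobayashi distance coincides with that of $B^n$, since a point has complex codimension at least $2$), yet a sequence tending to the puncture is Cauchy with no limit. The theorem that upgrades hyperbolic embeddedness to completeness requires the complement to be a closed complex hypersurface, and $\bigcup_{p\in\Lambda_{CG}(\Gamma)}p^\bot$ is not one. The standard repair is a Cauchy-sequence argument: a $d_{\Omega_0}$-Cauchy sequence is Cauchy for the complete metric of $Y=\Bbb{P}^n_{\Bbb{C}}\setminus(H_1\cup\dots\cup H_{2n+1})$ and so converges to some $w\in Y$; if $w$ lay on some $q^\bot$ with $q\in\Lambda_{CG}(\Gamma)$, one reruns the argument with $2n+1$ general-position polars chosen so that $q^\bot$ is among them, contradicting completeness of that complement. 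But this repair demands the stronger selection statement that every $q^\bot$ can be completed to such a configuration, i.e.\ that $\Lambda_{CG}(\Gamma)$ is not trapped in the relevant finite unions of proper subspaces --- exactly the point you admit you have not proved. Note that irreducibility does not formally settle it, since the finite-index subgroup stabilizing each member of a putative finite covering family of subspaces could be reducible even though $\Gamma$ is not; one has to combine the north--south dynamics of Proposition \ref{p:llemaleq} with minimality of the action on $\Lambda_{CG}(\Gamma)$ to rule such coverings out. As written, both the selection of the $2n+1$ hyperplanes and the transfer of completeness are gaps.
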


\section{The Cartan invariant and the control group } \label{s:cartan}
 In this section, we will show  how the control group ``encodes the geometry" of the Chen-Greenberg limit set trough the Cartan 
 angular invariant.
\begin{definition}
Given   $z=[z_1,z_2,\ldots,z_{n+1}]\in \partial \Bbb{H}^n_{\Bbb{C}}\setminus\{e_1,e_{n+1}\}$
we define:
\[
\begin{array}{c}
\phi_z:Span(\{e_2,\ldots, e_{n}\}) \rightarrow Span(\{e_2,\ldots, e_{n}\})\\
\phi_z(w)= 
Span
((
Span((
Span(\{w,e_{n+1}\})
\cap z^\bot)
\cup \{z\})\cap e_1^\bot)\cup \{ e_1\}
)
\cap 
e_{n+1}^\bot
\end{array}
\]
\end{definition}
 The following lemma is the computation of the transformation  $\phi_z$.

\begin{lemma} \label{r:mat}
Let $z=[z_1,z_2,\ldots,z_{n+1}]\in \partial\Bbb{H}^n_{\Bbb{C}}\setminus\{e_1,e_{n+1}\}$, then $\phi_z(w)=[[M_z]](w)$, where 

 \[
M_z=
 \begin{pmatrix}
z_{n+1}\bar{ z}_1+\vert z_2\vert^2& z_2\bar{z}_3&...& z_2\bar{z}_n\\
 z_3\bar{z}_2&z_{n+1}\bar{ z}_1+\vert z_3\vert^2&...& z_3\bar{z}_n\\
 \vdots&\vdots&\ddots&\vdots\\
 z_n\bar{z}_2& z_n\bar{z}_3&\ldots& z_{n+1}\bar{ z}_1+\vert z_n\vert^2
 \end{pmatrix}
 \]
\end{lemma}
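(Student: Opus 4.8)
The plan is to prove the identity by a direct computation in homogeneous coordinates, following a general point through the three $Span$-then-intersect steps defining $\phi_z$.

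First I would write out the Hermitian form attached to $H$ and record the three hyperplanes occurring in the construction as coordinate equations: a point $[u]$ lies in $e_1^\bot$, $e_{n+1}^\bot$, $z^\bot$ precisely when $u_{n+1}=0$, $u_1=0$, and $u_1\bar z_{n+1}+\sum_{i=2}^{n}u_i\bar z_i+u_{n+1}\bar z_1=0$, respectively. I would next observe that the hypotheses force $z_1\neq 0$ and $z_{n+1}\neq 0$: since isotropy gives $2\,\mathrm{Re}(z_1\bar z_{n+1})+\sum_{i=2}^{n}|z_i|^2=0$, vanishing of $z_1$ would force $z_2=\cdots=z_n=0$, hence $z=e_{n+1}$, which is excluded (and symmetrically for $z_{n+1}$). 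These two nonvanishing facts are exactly what makes each of the three intersections transverse, so that every $Span$-then-intersect step produces a single point rather than an entire line.

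Then I would trace the chain for a general $w=[0,w_2,\dots,w_n,0]$, abbreviating $s:=\sum_{j=2}^{n}w_j\bar z_j$. The line $\overleftrightarrow{w,e_{n+1}}$ meets $z^\bot$ at $q_1=\bar z_1\,w-s\,e_{n+1}$; the line $\overleftrightarrow{q_1,z}$ meets $e_1^\bot$ at $q_2=z_{n+1}q_1+s\,z$, whose last coordinate indeed vanishes; and $\overleftrightarrow{q_2,e_1}$ meets $e_{n+1}^\bot$ at $q_3=q_2-s\,z_1\,e_1$, whose first coordinate vanishes. Thus $q_3$ has first and last coordinates zero, so $q_3\in Span(\{e_2,\dots,e_n\})$ as required, and for $2\le i\le n$ its $i$-th entry is $z_{n+1}\bar z_1\,w_i+s\,z_i$.

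Finally, expanding $s\,z_i=|z_i|^2 w_i+\sum_{j\neq i}z_i\bar z_j\,w_j$ rewrites this entry as $(z_{n+1}\bar z_1+|z_i|^2)w_i+\sum_{j\neq i}z_i\bar z_j\,w_j$, which is exactly the $i$-th coordinate of $M_z(w_2,\dots,w_n)^{T}$; compactly, $M_z=z_{n+1}\bar z_1\,I_{n-1}+vv^{*}$ with $v=(z_2,\dots,z_n)^{T}$. I expect the only genuine delicacy to be the scalar bookkeeping together with the three transversality checks — namely that $e_{n+1}\notin z^\bot$, $z\notin e_1^\bot$, and $e_1\notin e_{n+1}^\bot$, so that each intersection is a single point; all three reduce immediately to the nonvanishing of $z_1$ and $z_{n+1}$ already established.
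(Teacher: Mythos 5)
Your proposal is correct and follows essentially the same route as the paper: a direct computation in homogeneous coordinates tracing $w$ through the three span-then-intersect steps, arriving at the same $i$-th entry $(z_{n+1}\bar z_1+|z_i|^2)w_i+\sum_{j\neq i}z_i\bar z_j w_j$. The only differences are cosmetic improvements: your choice of representatives $q_1=\bar z_1 w-se_{n+1}$, $q_2=z_{n+1}q_1+sz$, $q_3=q_2-sz_1e_1$ avoids the paper's affine parameters $\sigma,\eta$ and their fractions, and you make explicit the transversality checks ($z_1\neq 0$, $z_{n+1}\neq 0$) that the paper leaves implicit.
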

\begin{proof}
Let $w\in Span\{e_2,\ldots, e_n\}$, then we can assume $w=[0,w_2, \ldots,w_n,0]$, thus
 \[
 Span(\{w,e_{n+1}\})=\{[0, (1-\lambda)w_2,\ldots,(1-\lambda)w_n,\lambda]\vert  \lambda\in\hat{\Bbb{C}}\}.
 \]
To obtain $w_1=Span(\{w,e_{n+1}\})\cap z^\bot$, we use the Hermitian form 
$\langle,\rangle$,  so $w_1$ is obtained through  the solution of 

$$
\lambda \bar{z}_1+
\sum_{j=2}^n(1-\lambda)w_j\bar{z}_j=0.
$$

 And the solution is 
 \begin{equation}
 \sigma=
\frac
{
\sum_{j=2}^nw_j\bar{z}_j
}
{
-\bar{z}_1+\sum_{j=2}^nw_j\bar{z}_j
}.
 \end{equation}
 Therefore, \[w_1=[0, (1-\sigma)w_2, \ldots,(1-\sigma)w_n,\sigma].\]
 If $x \in Span(\{w_1,z\})$, we have 
\[x=
[(1-\lambda)z_1, \lambda(1-\sigma)w_2+(1-\lambda)z_2, \ldots,
 \lambda(1-\sigma)w_n+(1-\lambda)z_n, \lambda\sigma+ (1-\lambda)z_{n+1}],
\] where  $\lambda\in \hat{\Bbb{C}}$. Thus, $w_2= Span(\{w_1,z\})\cap e_1^\bot$ is given by the solution of 
 \begin{equation}
 0=\lambda\sigma+(1-\lambda)z_{n+1}.
 \end{equation}
Such  solution is $\eta=\frac{z_{n+1}}{z_{n+1}-\sigma}$. Now, 
\[w_2=[(1-\eta)z_1, \eta(1-\sigma)w_2+(1-\eta)z_2, \ldots,
 \eta(1-\sigma)w_n+(1-\eta)z_n, \eta\sigma+ (1-\eta)z_{n+1}].\]

Next,  if  $y\in Span(\{w_2,e_1 \})$, then 

 \[\begin{split}
y=&[\lambda(1-\eta)z_1+(1-\lambda), \lambda(\eta(1-\sigma)w_2+(1-\eta)z_2), \\&\ldots,
\lambda(\eta(1-\sigma)w_n+(1-\eta)z_n), \lambda(\eta\sigma+ (1-\eta)z_{n+1})]
 \end{split}
 \]
and $\phi_z(w)=Span(\{w_2,e_1 \})\cap e_{n+1}^\bot$ is induced by the solution of 
 \[
\lambda(1-\eta)z_1+(1-\lambda) =0,
\]
which is $\lambda=\frac{1}{\eta z_1}$.

 A straightforward calculation shows that
 \[
 \phi_z(w)
= \left [ \sum_{i=2}^n( (z_{n+1}\bar{ z}_1+\vert z_i\vert^2 ) w_i+z_i\sum_{j\neq 1,i}w_j\bar{z}_j)e_i \right ],
 \]
which concludes the proof.
 \end{proof}
 We have the following straightforward corollary.

\begin{corollary}
 If $z\in \Bbb{H}^n_\Bbb{C}\setminus \{e_1,e_{n+1}\}$. Then,
\begin{enumerate}

\item We have  $\phi_z=Id$ if and only if $z\in Span(\{e_1,e_{n+1}\})$

\item Also, $Det(M_z-z_{n+1}\bar{z}_1 Id)=0$.

\end{enumerate}
\end{corollary}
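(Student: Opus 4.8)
The plan is to recognize the matrix $M_z$ as a rank-one perturbation of a scalar matrix. Writing $v$ for the column vector with entries $z_2,\ldots,z_n$, the entry formula $(M_z)_{ij}=z_{n+1}\bar z_1\,\delta_{ij}+z_i\bar z_j$ shows at once that
\[
M_z=z_{n+1}\bar z_1\, I_{n-1}+vv^{*},
\]
where $I_{n-1}$ is the identity on $Span(\{e_2,\ldots,e_n\})$ and $v^{*}$ denotes the conjugate transpose. This single identity drives both parts of the corollary, so establishing it cleanly from Lemma \ref{r:mat} is the first and essential step.

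For part (2) I would simply subtract $z_{n+1}\bar z_1 I_{n-1}$ and observe that $M_z-z_{n+1}\bar z_1 I_{n-1}=vv^{*}$ is an outer product, hence of rank at most one. Since we are in dimension $n-1\geq 2$, a matrix of rank at most one is singular, so its determinant vanishes, giving $Det(M_z-z_{n+1}\bar z_1 I_{n-1})=0$.

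For part (1) the eigenstructure of $vv^{*}$ does the work: $vv^{*}$ has the single eigenvalue $\sum_{j=2}^{n}\vert z_j\vert^{2}$, with eigenvector $v$, together with the eigenvalue $0$ of multiplicity $n-2$. Thus $M_z$ is a scalar multiple of the identity exactly when $\sum_{j=2}^{n}\vert z_j\vert^{2}=0$, i.e. exactly when $z_2=\cdots=z_n=0$, which is precisely the condition $z\in Span(\{e_1,e_{n+1}\})$. Since $\phi_z=[[M_z]]$ and a projective map is the identity iff its representing matrix is a \emph{nonzero} scalar matrix, this gives the forward implication. For the converse I must also check that the scalar $z_{n+1}\bar z_1$ is nonzero; here I would use that $z$ lies on $\partial\Bbb{H}^n_\Bbb{C}$ and is distinct from $e_1,e_{n+1}$: once $v=0$, the defining equation $\langle z,z\rangle=0$ forces $2\,\mathrm{Re}(z_1\bar z_{n+1})=0$, and were either $z_1$ or $z_{n+1}$ zero we would recover $z=e_{n+1}$ or $z=e_1$, both excluded. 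Hence $z_{n+1}\bar z_1\neq 0$ and $M_z=z_{n+1}\bar z_1 I_{n-1}$ genuinely represents the identity.

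The only genuine subtlety, and the step most worth stating carefully, is the role of the dimension: both the implication ``scalar plus rank one equals scalar forces the rank-one part to vanish'' and the claim ``rank one implies singular'' require $n-1\geq 2$. I would therefore make explicit that the corollary is read in the regime where $Span(\{e_2,\ldots,e_n\})$ has dimension at least two, in which the reduction to the eigenvalues of $vv^{*}$ is valid.
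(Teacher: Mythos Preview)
Your proof is correct; the paper labels this corollary ``straightforward'' and gives no argument, and your rank-one decomposition $M_z=z_{n+1}\bar z_1\,I_{n-1}+vv^{*}$ is exactly the natural way to make it explicit---indeed it is the same eigenstructure the paper writes out immediately afterward in Lemma~\ref{l:cartanang}. One minor remark: in checking $z_{n+1}\bar z_1\neq 0$ you invoke $\langle z,z\rangle=0$, but that step is superfluous---once $v=0$, the exclusion $z\neq e_1,e_{n+1}$ already forces $z_1,z_{n+1}\neq 0$, so the argument goes through whether one reads the hypothesis as $\partial\Bbb H^n_\Bbb C$ (as in the definition of $\phi_z$) or as $\Bbb H^n_\Bbb C$ (as printed).
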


Now, we get the following properties of $M_z$.

\begin{lemma} \label{l:cartanang}
Let $z\in \Bbb{H}^n_\Bbb{C}\setminus \{e_1,e_{n+1}\}$. Then,
\begin{enumerate}
\item \label{i:car1} We have $[[M_z]]\in PU(n-1)$.
\item \label{i:car2} The projective subspace $W_z=e_1^\bot\cap e_{n+1}^\bot\cap z^\bot$, satisfies
$W_z\subset Fix [[M_z]]$. 
\end{enumerate}
Moreover, if $z=[z_1,\ldots, z_{n+1}]\notin  Span (\{e_1,e_{n+1}\})$ and  $z_0=[0,z_2,\ldots,z_{n},0]$, then 
\begin{enumerate}
\item[(a)] The vector $\widetilde{z}_0=(z_2,\ldots, z_n)$ is an eigenvector  $M_z$ with eigenvalue  $\lambda_z=z_{n+1}\bar{z}_1+\sum_{j=2}^n \vert z\vert^2=-z_{1}\bar{z}_{n+1}=\vert z_{1}\bar{z}_{n+1}\vert e^{i\Bbb{A}(e_1,z,e_{n+1})}$.

\item[(b)]   We have  $Fix(\phi_z)=W_z\cup \{z_0\}$.

\end{enumerate}

\end{lemma}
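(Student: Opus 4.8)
The plan is to put the explicit matrix $M_z$ into a normal form as a rank-one perturbation of a scalar matrix, and then read off every assertion from that normal form together with the single scalar identity coming from $z\in\partial\Bbb{H}^n_\Bbb{C}$. Concretely, I would first observe that $M_z = a\,I_{n-1}+vv^*$, where $a=z_{n+1}\bar z_1$ and $v=\widetilde z_0=(z_2,\ldots,z_n)^{T}$: indeed the $(i,j)$ entry of $vv^*$ is $z_i\bar z_j$, reproducing the off-diagonal entries, while the diagonal gains the extra $|z_i|^2$. Since $z$ lies on $\partial\Bbb{H}^n_\Bbb{C}$, the vanishing of $\langle z,z\rangle = z_1\bar z_{n+1}+z_{n+1}\bar z_1+\sum_{j=2}^n|z_j|^2$ is exactly the relation $a+\bar a+\|v\|^2=0$, equivalently $a+\|v\|^2=-\bar a=-z_1\bar z_{n+1}$. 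This isotropy relation is the only property of $z$ that the argument needs, and keeping track of conventions for $\langle\cdot,\cdot\rangle$ is the only point that requires any care.

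For part (\ref{i:car1}) I would compute $M_zM_z^*=(aI+vv^*)(\bar aI+vv^*)=|a|^2I+(a+\bar a+\|v\|^2)vv^*$, using $vv^*vv^*=(v^*v)vv^*=\|v\|^2 vv^*$. The isotropy relation annihilates the middle term, leaving $M_zM_z^*=|a|^2I$. One checks $a=z_{n+1}\bar z_1\neq 0$, for if $z_1=0$ the isotropy relation forces $\|v\|^2=0$ and hence $z=e_{n+1}$, which is excluded (symmetrically for $z_{n+1}$). Thus $M_z$ is $|a|$ times a unitary matrix and $[[M_z]]\in PU(n-1)$.

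For part (a) I would use $M_zv=av+v(v^*v)=(a+\|v\|^2)v$, so $\widetilde z_0$ is an eigenvector of $M_z$ with eigenvalue $\lambda_z=a+\|v\|^2$, which by the isotropy relation equals $-z_1\bar z_{n+1}$. To identify the argument with the Cartan invariant I would evaluate the three Hermitian products $\langle e_1,z\rangle=\bar z_{n+1}$, $\langle z,e_{n+1}\rangle=z_1$, and $\langle e_{n+1},e_1\rangle=1$, so that $\Bbb{A}(e_1,z,e_{n+1})=\arg\big(-\langle e_1,z\rangle\langle z,e_{n+1}\rangle\langle e_{n+1},e_1\rangle\big)=\arg(-z_1\bar z_{n+1})=\arg(\lambda_z)$, while $|\lambda_z|=|z_1\bar z_{n+1}|$; this gives the stated formula $\lambda_z=|z_1\bar z_{n+1}|e^{i\Bbb{A}(e_1,z,e_{n+1})}$.

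Finally, for parts (\ref{i:car2}) and (b) I would compute $W_z=e_1^\bot\cap e_{n+1}^\bot\cap z^\bot$ in coordinates, obtaining the set of $w$ with $w_1=w_{n+1}=0$ and $\sum_{j=2}^n w_j\bar z_j=0$, i.e. the projectivization of $v^\bot\subset\Bbb{C}^{n-1}$. For such $w$ one has $v^*\widetilde w=0$, hence $M_z\widetilde w=a\widetilde w$, so $[[M_z]]$ fixes $W_z$ pointwise, which is (\ref{i:car2}). Since $z\notin Span(\{e_1,e_{n+1}\})$ forces $v=\widetilde z_0\neq 0$, the two eigenvalues $a$ (on $v^\bot$) and $\lambda_z=a+\|v\|^2$ (on $\Bbb{C}v$) are distinct, so the fixed-point set of $[[M_z]]$ is exactly the union of the two projectivized eigenspaces, namely $W_z\cup\{z_0\}$ with $z_0=[0,z_2,\ldots,z_n,0]$; this establishes (b). The whole lemma thus collapses to the normal form $M_z=aI+vv^*$ plus the one boundary identity, and I expect no genuine obstacle beyond bookkeeping of the Hermitian-form conventions.
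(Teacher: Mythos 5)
Your proof is correct and follows essentially the same route as the paper: a direct verification from the explicit matrix $M_z$ of unitarity, of the eigenvector $\widetilde z_0$, and of the fixed-point set, with the boundary identity $z_{n+1}\bar z_1+z_1\bar z_{n+1}+\sum_{j=2}^n|z_j|^2=0$ doing all the work. Your normal form $M_z=aI+vv^*$ simply packages the entrywise computations that the paper leaves as ``straightforward calculations'' (the paper also proves part (2) geometrically from the definition of $\phi_z$ rather than from the matrix), but the logic --- including the final step that $z_0\notin W_z$ and that the two eigenvalues are distinct, so $Fix(\phi_z)$ is exactly the union of the two projectivized eigenspaces --- is the same.
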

\begin{proof} Let us show part (\ref{i:car1}).
Taking $z=[z_1,z_2,\ldots,z_{n+1}]$, by Lemma \ref{r:mat} and a straightforward calculation, we have
\[
M_z M_z^*
=
 \begin{pmatrix}
\vert z_{n+1}\vert^2 \vert  z_1\vert^2& 0&\ldots & 0\\
0&\vert z_{n+1}\vert^2 \vert  z_1\vert^2&\ldots & 0\\
 \vdots&\vdots&\ddots&\vdots\\
0&0&\ldots&  \vert z_{n+1}\vert^2 \vert  z_1\vert^2
 \end{pmatrix}
 \]
which shows the  claim.\\

Let us show part (\ref{i:car2}). Let $w\in W_z$, then 
\[
\begin{array}{l}
Span(\{w,e_{n+1}\})\cap z^\bot=\{w\}\\    
Span(\{w,z\})\cap e_1^\bot=\{w\}\\
Span(\{w, e_1\})\cap e_{n+1}^\bot=\{w\}
\end{array}
\]
which shows $\phi_z(w)=w$, as required.\\

The proof of part (a) is a straightforward calculation, so we omit it here.  To conclude the proof, let us show part (b). 
Since $\phi_z$ is not the identity, we conclude that  $W_z$ is a projective space of dimension $n-3$ contained in 
$Span\{e_2,\ldots, e_n\}$. On the other hand, since $\phi_z\in PU(n-1)$, it suffices to 
show $z_0\notin W_z$. If this is not true, we must have $z_0\in z^\bot$, which is equivalent to the fact 
$\sum_{j=2}^n\vert z_j\vert^2=0$, which is not possible. Therefore, $z_0\notin W_z$.
\end{proof}

As an easy corollary, we get
\begin{corollary}
Let $z\in \Bbb{H}^n_\Bbb{C}\setminus \{e_1,e_{n+1}\}$, then there is $H:e_1^\bot \cap e_{n+1}^\bot\rightarrow e_1^\bot \cap e_{n+1}^\bot $  a projective transformation such that:
 \[
M\phi_z M^{-1}=
\left [
\begin{array}{llll}
-e^{i\Bbb{A}(z,e_1,e_{n+1})}&\\
                           & \ddots \\
                           &            & -e^{i\Bbb{A}(z,e_1,e_{n+1})} \\
                           &            &                                    &e^{i\Bbb{A}(e_1,z,e_{n+1})}
\end{array}
\right ].
\]
\end{corollary}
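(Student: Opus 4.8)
The plan is to diagonalize $M_z$ explicitly, read off its two eigenvalues and eigenspaces from Lemma \ref{l:cartanang}, and then rescale in the projective world so that the diagonal entries acquire unit modulus and the stated arguments. Throughout, the target acts on $e_1^\bot\cap e_{n+1}^\bot=Span(\{e_2,\ldots,e_n\})\cong\Bbb{P}^{n-2}_\Bbb{C}$, which is exactly the space on which $\phi_z$ is defined.

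First I would record the structure of $M_z$. From Lemma \ref{r:mat} one may write $M_z=z_{n+1}\bar z_1\,I+\widetilde z_0\widetilde z_0^{*}$, a rank-one perturbation of a scalar matrix, where $\widetilde z_0=(z_2,\ldots,z_n)$. This exhibits $M_z$ as a normal matrix; indeed by Lemma \ref{l:cartanang}(\ref{i:car1}) we have $M_zM_z^{*}=|z_1z_{n+1}|^2 I$, so $|z_1z_{n+1}|^{-1}M_z\in U(n-1)$ and $M_z$ is unitarily diagonalizable. I would take an orthonormal eigenbasis consisting of an orthonormal basis of $\widetilde z_0^{\perp}$ together with $\widetilde z_0/\|\widetilde z_0\|$, and let $U\in U(n-1)$ be the associated change of basis. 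The rank-one decomposition shows that $\widetilde z_0$ is an eigenvector with eigenvalue $\lambda_z=z_{n+1}\bar z_1+\sum_{j=2}^{n}|z_j|^2$, while every vector orthogonal to $\widetilde z_0$ is an eigenvector with eigenvalue $z_{n+1}\bar z_1$. Since $w=(0,w_2,\ldots,w_n,0)$ lies in $z^\bot$ exactly when $\widetilde w\perp\widetilde z_0$, this eigenspace is precisely $W_z=e_1^\bot\cap e_{n+1}^\bot\cap z^\bot$, consistent with Lemma \ref{l:cartanang}(\ref{i:car2}) and $\dim W_z=n-3$. Hence
\[
U^{*}M_zU=\mathrm{diag}\big(z_{n+1}\bar z_1,\ldots,z_{n+1}\bar z_1,\lambda_z\big),
\]
with $z_{n+1}\bar z_1$ repeated $n-2$ times and $\lambda_z$ in the last slot (the fixed point $z_0$).

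Next I would identify both eigenvalues as unit-modulus multiples of $|z_1z_{n+1}|$ and compute their arguments. By Lemma \ref{l:cartanang}(a) one has $\lambda_z=-z_1\bar z_{n+1}=|z_1z_{n+1}|\,e^{i\Bbb{A}(e_1,z,e_{n+1})}$. For the repeated eigenvalue I would use $z_{n+1}\bar z_1=\overline{z_1\bar z_{n+1}}=-\overline{\lambda_z}$, so that $z_{n+1}\bar z_1=|z_1z_{n+1}|\,e^{i(\pi-\Bbb{A}(e_1,z,e_{n+1}))}$. The Cartan invariant is antisymmetric, $\Bbb{A}(z,e_1,e_{n+1})=-\Bbb{A}(e_1,z,e_{n+1})$ (immediate from $\Bbb{A}([x],[y],[z])=\arg(-\langle x,y\rangle\langle y,z\rangle\langle z,x\rangle)$ and $\langle a,b\rangle=\overline{\langle b,a\rangle}$), whence
\[
z_{n+1}\bar z_1=-|z_1z_{n+1}|\,e^{i\Bbb{A}(z,e_1,e_{n+1})}.
\]
Dividing $U^{*}M_zU$ by the nonzero scalar $c=|z_1z_{n+1}|$ then turns the diagonal entries into exactly $-e^{i\Bbb{A}(z,e_1,e_{n+1})}$ (repeated $n-2$ times) and $e^{i\Bbb{A}(e_1,z,e_{n+1})}$. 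Setting $M=[[U^{-1}]]$ and restricting to $e_1^\bot\cap e_{n+1}^\bot$ gives a projective transformation of $\Bbb{P}^{n-2}_\Bbb{C}$ with $M\phi_zM^{-1}=[[\,U^{*}M_zU\,]]$ equal to the asserted diagonal form.

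I expect the main obstacle to be purely bookkeeping: keeping the Hermitian-form convention consistent so that the two arguments land precisely on $\Bbb{A}(z,e_1,e_{n+1})$ and $\Bbb{A}(e_1,z,e_{n+1})$, with the correct sign $-1$ on the repeated block. The point is that the boundary identity $\langle z,z\rangle=0$ (used already to get $\lambda_z=-z_1\bar z_{n+1}$) forces $|\lambda_z|=|z_{n+1}\bar z_1|=|z_1z_{n+1}|$, while the antisymmetry relation fixes the two arguments; together these make a \emph{single} scalar $c=|z_1z_{n+1}|$ normalize both blocks simultaneously, which is exactly what is needed for the projective identity to hold.
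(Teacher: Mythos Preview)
Your proof is correct and is exactly the argument the paper has in mind: the corollary is stated without proof as an ``easy corollary'' of Lemma~\ref{l:cartanang}, and your rank-one decomposition $M_z=z_{n+1}\bar z_1\,I+\widetilde z_0\widetilde z_0^{*}$ together with the unitary diagonalization and the identification of the two eigenvalue arguments via the antisymmetry $\Bbb{A}(z,e_1,e_{n+1})=-\Bbb{A}(e_1,z,e_{n+1})$ is precisely the intended route. There is nothing to compare against beyond this, and no gap in what you wrote.
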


\begin{theorem}
Let $\Gamma\subset PU(1,n)$ be a non-elementary discrete subgroup and $x,y,z\in \Lambda_{CG}(\Gamma)$ be three distinct elements, set
\[
\begin{array}{c}
\phi_{xyz}:x^\bot\cap z^\bot \rightarrow x^\bot\cap z^\bot\\
\phi_{xyz}(w)= 
Span
((
Span((
Span(\{w,z\})
\cap y^\bot)
\cup \{y\})\cap x^\bot)\cup \{ x\}
)
\cap 
z^\bot,
\end{array}
\]
and
\[
\begin{array}{l}
\Phi_{xyz}:z^\bot(z)\rightarrow z^\bot(z)\\
\Phi_{xyz}(\ell)=Span(\phi_{xyz}(\ell\cap x^\bot)\cup\{z\}).
\end{array}
\]
Then
\begin{enumerate}
\item \label{i:cartan1} There is an ordered base $\beta=\{v_1,\ldots, v_{n-1}\}$ such that
$$
\phi_{xyz}\left(\left[\sum_{j=1}^{n-1}\alpha_j v_j\right ]\right)=\left [e^{i\Bbb{A}(x,y,z)}\alpha_{n-1} v_j-\sum_{j=1}^{n-2}e^{i\Bbb{A}(y,x,z)}\alpha_j v_j\right ]
$$

\item \label{i:cartan2} The transformation 
$\Phi_{xyz}$ belongs to $\Gamma(z,z)$.
\end{enumerate}
\end{theorem}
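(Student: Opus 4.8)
The plan is to prove the two assertions in turn: (\ref{i:cartan1}) reduces to the explicit diagonalization already obtained for the normalized configuration, while (\ref{i:cartan2}) rests on identifying $\Phi_{xyz}$ with a triple perspectivity and realizing that perspectivity as a Cartan map.

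For part (\ref{i:cartan1}), I would first note that the definition of $\phi_{xyz}$ involves only spans, intersections and the polarity $(\cdot)^\bot$ coming from $\langle\,,\rangle$, all of which are $PU(1,n)$-equivariant; hence $\phi_{g(x)g(y)g(z)}=g\circ\phi_{xyz}\circ g^{-1}$ for every $g\in PU(1,n)$. Since $PU(1,n)$ is transitive on pairs of distinct points of $\partial\Bbb{H}^n_\Bbb{C}$, I choose $g$ with $g(x)=e_1$, $g(z)=e_{n+1}$ and set $\tilde{y}=g(y)\notin\{e_1,e_{n+1}\}$. Matching the two definitions term by term (with $e_{n+1},z,e_1$ in the definition of $\phi_{z}$ playing the roles of $z,y,x$) shows $\phi_{g(x)g(y)g(z)}=\phi_{\tilde y}$, the map computed in Lemma \ref{r:mat}. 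By Lemma \ref{l:cartanang} together with its corollary, $\phi_{\tilde y}=[[M_{\tilde y}]]\in PU(n-1)$ is diagonalizable with eigenvalue $e^{i\Bbb{A}(e_1,\tilde y,e_{n+1})}$ on the line $\tilde{y}_0=[0,\tilde y_2,\dots,\tilde y_n,0]$ and eigenvalue $-e^{i\Bbb{A}(\tilde y,e_1,e_{n+1})}$ on $W_{\tilde y}=e_1^\bot\cap e_{n+1}^\bot\cap \tilde y^\bot$. Using the $PU(1,n)$-invariance of the Cartan angular invariant to replace $\Bbb{A}(e_1,\tilde y,e_{n+1})$ by $\Bbb{A}(x,y,z)$ and $\Bbb{A}(\tilde y,e_1,e_{n+1})$ by $\Bbb{A}(y,x,z)$, and pulling the eigenbasis back by $g^{-1}$, I obtain the ordered base $\beta=\{v_1,\dots,v_{n-1}\}$ with $v_{n-1}$ the eigendirection of $e^{i\Bbb{A}(x,y,z)}$ and $v_1,\dots,v_{n-2}$ spanning the eigenspace of $-e^{i\Bbb{A}(y,x,z)}$; this is exactly the asserted formula.

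For part (\ref{i:cartan2}), the first step is the incidence identity $\Phi_{xyz}=\psi_3\circ\psi_2\circ\psi_1$, where $\psi_1(\ell)=Span(\{\ell\cap y^\bot,y\})$ maps $z^\bot(z)$ to $y^\bot(y)$, $\psi_2$ maps $y^\bot(y)$ to $x^\bot(x)$ and $\psi_3$ maps $x^\bot(x)$ to $z^\bot(z)$ by the analogous rule. This follows because, for $\ell\in z^\bot(z)$, one has $Span(\{\ell\cap x^\bot,z\})=\ell$, so the innermost operation defining $\phi_{xyz}$ on $w=\ell\cap x^\bot$ is simply $\ell\cap y^\bot$, and the remaining operations reproduce $\psi_2,\psi_3$. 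The second step is to recognize each $\psi_i$ as a Cartan map: comparing with the computation in the proof of Proposition \ref{p:llemaleq}, the canonical perspectivity that carries a line through the repelling point to the line through the attracting point obtained by intersecting with the opposite polar is precisely the Cartan map whose limiting compact factors are trivial, i.e.\ whose Cartan extension is $H$ in adapted coordinates. Producing, via Lemma \ref{l:2tran}, sequences tending simply to infinity with attracting/repelling points $(y,z)$, $(x,y)$ and $(z,x)$, and composing the three associated Cartan sequences along the cycle $z\to y\to x\to z$, the composition argument of Lemmas \ref{l:monoid} and \ref{l:prin} places the resulting Cartan map in $\Gamma(z,z)$, and, for rotation-free legs, the incidence identity identifies this map with $\Phi_{xyz}$.

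The main obstacle is to control the compact (rotational) factors of the three legs so that the composite Cartan map is exactly $\psi_3\circ\psi_2\circ\psi_1$ and not a unitary rotation of it along $W_z=x^\bot\cap y^\bot\cap z^\bot$. Here part (\ref{i:cartan1}) is decisive: it shows that $\Phi_{xyz}$ is rigid, acting as the single scalar $-e^{i\Bbb{A}(y,x,z)}$ on all of $P(W_z)$ and as $e^{i\Bbb{A}(x,y,z)}$ on the complementary direction, so that its only free parameter is the Cartan angular invariant, which depends on $x,y,z$ alone. I would therefore argue that any admissible triangle composite fixes $P(W_z)$ and the distinguished direction with the prescribed eigenvalues, and, since $\Gamma(z,z)$ is a compact Lie group by Theorem \ref{t:yyy}, the limit of such composites is forced to be the uniquely determined $\Phi_{xyz}$. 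Pinning down this last step, namely excluding a residual rotation on $P(W_z)$, is the delicate point, and I expect it to consume most of the work.
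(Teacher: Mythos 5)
Your treatment of part (1) is sound and is essentially what the paper intends (it omits this proof as ``straightforward''): normalize $x,z$ to $e_1,e_{n+1}$ by $PU(1,n)$-equivariance, identify $\phi_{xyz}$ with the map $\phi_{\tilde y}$ of Lemma \ref{r:mat}, and read off the eigendata from Lemma \ref{l:cartanang} and its corollary, using the invariance of the Cartan angular invariant. Your incidence identity $\Phi_{xyz}=\psi_3\circ\psi_2\circ\psi_1$ is also correct.

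The gap is in part (2), at exactly the point you flag and then leave open. Lemma \ref{l:2tran} only produces sequences whose associated Cartan maps are \emph{some} elements of $\Gamma(z,y)$, $\Gamma(y,x)$, $\Gamma(x,z)$; each of these may differ from the pure perspectivity $\psi_i$ by a nontrivial unitary factor coming from the limits of the compact parts of the Cartan decompositions, so the composite is a priori $\psi_3\rho_3\psi_2\rho_2\psi_1\rho_1$ for unknown rotations $\rho_i$. Part (1) cannot rescue this: it computes what $\Phi_{xyz}$ \emph{is}, but says nothing about what your composite is, and there is no reason an arbitrary ``triangle composite'' should fix $P(W_z)$ with the prescribed eigenvalues --- that is precisely what must be proved. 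Compactness of $\Gamma(z,z)$ only places the limit in a compact group; it does not single out $\Phi_{xyz}$. The paper closes this gap by a different mechanism: by Theorem \ref{t:monoid}, $\Gamma(z,y)$ and $\Gamma(x,z)$ are \emph{groups} under $\star$, hence each contains the $\star$-identity of the corresponding $Bihol$, and one checks that this $\star$-identity is exactly the rotation-free perspectivity. Taking $I_1\in\Gamma(z,y)$ and $I_2\in\Gamma(x,z)$ to be these identities, with associated Cartan sequences $(\gamma_{1m})$ and $(\gamma_{2m})$, the composite $(\gamma_{2m}\gamma_{1m})$ contracts to $z$ off $z^\bot$, and the composition argument of Lemma \ref{l:monoid} identifies its Cartan map as $I_2\star I_1=\psi_3\circ\psi_2\circ\psi_1=\Phi_{xyz}$; note that the middle perspectivity $\psi_2$ is supplied automatically by the $\star$-composition, so only two legs are needed rather than your three. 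You should replace your final paragraph by an appeal to Theorem \ref{t:monoid} in this form.
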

\begin{proof}
The proof of part (\ref{i:cartan1}) is straightforward, so we will omit it here.  Let us show part (\ref{i:cartan1}). Let 
$I_1\in \Gamma(z,y)$ be the identity of $(Bihol(y,z),\star)$ and $Id_2\in \Gamma(x,z)$ be the identity of $(Bihol(x,z),\star)$. Also,
let $(\gamma_{jm})$ be a Cartan  sequence associated to $I_j$. It is not hard to show that $\gamma_{2m}\gamma_{1m} \xymatrix{ \ar[r]_{m \rightarrow
\infty}&}  z $ uniformly on compact sets of $\Bbb{P}^n_\Bbb{C}\setminus z^\bot$. By Proposition \ref{p:llemaleq}, there is  a strictly
increasing sequence $(n_m)\subset \Bbb{N}$ and $\nu\in \Gamma(z,z)$ such that for every $\ell\in z^\bot (z)$ and 
$w\in \ell\setminus\{z\}$ we have
\[
\begin{array}{l}
\mathcal{D}_{(\gamma_{2n_m}\gamma_{1n_m})}(w)=\nu(\ell).\\
\end{array}
\]

Let $\ell \in z^\bot(z)$, $w\in \ell\setminus \{z\}$ and $(w_m)\subset \Bbb{P}^n_\Bbb{C}$ such that 
$w_m \xymatrix{ \ar[r]_{m \rightarrow  \infty}&} w $. Also, let us assume that there are $w_1,w_2,w_3\in \Bbb{P}^2_{\Bbb{C}}$ such 
that 
\[
\begin{array}{l}
\gamma_{1n_m}(w_m)\xymatrix{ \ar[r]_{m \rightarrow  \infty}&} w_1\\
\gamma_{2n_m}(\gamma_{1n_m}(w_m))\xymatrix{ \ar[r]_{m \rightarrow  \infty}&} w_2\\
\end{array}
\]

As in the proof of Lemma \ref{l:monoid}, we deduce 
$$
w_1\in I_1(\ell)=Span((\ell\cap y^\bot)\cup\{y\}).$$ 
Consequently, 
$$
w_2\in I_2(Span((I_1(\ell)\cap x^\bot)\cup \{x\}))=Span((Span((I_1(\ell)\cap x^\bot)\cup\{x\})\cap z^\bot)\cup \{z\})
=\Phi_{xyz}(\ell).
$$ 
Therefore, $\nu=\Phi_{xy,z}$, which concludes the proof. 
\end{proof}

We get the straightforward corollary.

\begin{corollary}
Let $\Gamma\subset PU(1,n)$ be a non-elementary discrete subgroup, set
$\Gamma(x,y,z)=\{\Phi_{x,y,z}\vert x,y,z\in \partial \Bbb{H}^n_\Bbb{C} \textrm{ are  distinct points}\}$
then
\begin{enumerate}
\item  We have $\Gamma(x,y,z)=\{Id_{z^\bot(z)}\vert z\in \Lambda_{CG}(\Gamma)\}$ if and only if $\Gamma$ is a $\Bbb{C}$-Fuchsian group.
\item  Every element in  $\Gamma(x,y,z)$ has order 2 if and only if $\Gamma$ is a $\Bbb{R}$-Fuchsian group.

\end{enumerate}
\end{corollary}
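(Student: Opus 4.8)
The plan is to read off both dichotomies from the spectrum of $\phi_{xyz}$ furnished by part (\ref{i:cartan1}) of the preceding theorem. In the basis $\beta$ that map is diagonal, with the single eigenvalue $e^{i\Bbb{A}(x,y,z)}$ on $v_{n-1}$ and the eigenvalue $-e^{i\Bbb{A}(y,x,z)}$ of multiplicity $n-2$ on $v_1,\dots,v_{n-2}$. First I would record the elementary symmetry $\Bbb{A}(y,x,z)=-\Bbb{A}(x,y,z)$, which follows at once from $\langle u,v\rangle=\overline{\langle v,u\rangle}$ applied to $\Bbb{A}(x,y,z)=\arg(-\langle x,y\rangle\langle y,z\rangle\langle z,x\rangle)$. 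Writing $\theta=\Bbb{A}(x,y,z)$, the projective class of $\phi_{xyz}$ is then represented by $\mathrm{diag}(-e^{-i\theta},\dots,-e^{-i\theta},e^{i\theta})$, and a power $\phi_{xyz}^{k}$ is scalar exactly when $(-1)^{k}=e^{2ik\theta}$. Since $\Phi_{xyz}=\mathrm{Id}$ if and only if $\phi_{xyz}=\mathrm{Id}$ (the assignment $\ell\mapsto \ell\cap x^\bot$ is a bijection $z^\bot(z)\to x^\bot\cap z^\bot$, as $\langle x,z\rangle\neq 0$ for distinct boundary points), the whole corollary reduces to reading the order of this diagonal class in terms of $\theta$, subject to $2|\theta|\le\pi$. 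Non-elementarity, which guarantees $\Lambda_{CG}(\Gamma)$ is infinite, will be used to produce enough triples.

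For part (1), the class is trivial precisely when its two eigenvalues agree, i.e. $e^{i\theta}=-e^{-i\theta}$, equivalently $\cos\theta=0$, equivalently $\theta=\pm\pi/2$; by item (1) of the Cartan invariant properties this says that $x,y,z$ lie on a complex line. Hence $\Gamma(x,y,z)=\{\mathrm{Id}_{z^\bot(z)}\}$ is equivalent to: every triple of distinct points of $\Lambda_{CG}(\Gamma)$ lies on a complex line. I would then upgrade this to a single line: fixing distinct $p_1,p_2\in\Lambda_{CG}(\Gamma)$, any further $p\in\Lambda_{CG}(\Gamma)$ lies on $\overleftrightarrow{p_1,p_2}$ because two points determine a unique complex line, so $\Lambda_{CG}(\Gamma)\subset\ell:=\overleftrightarrow{p_1,p_2}$. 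As $\Lambda_{CG}(\Gamma)$ is $\Gamma$-invariant and each $\gamma\in\Gamma$ carries $\overleftrightarrow{a,b}$ to $\overleftrightarrow{\gamma a,\gamma b}$, we get $\gamma(\ell)=\ell$, so $\Gamma$ is $\Bbb{C}$-Fuchsian. The converse is immediate: an invariant complex line forces $\Lambda_{CG}(\Gamma)$ into its boundary chain, so every triple lies on that line and $\theta\equiv\pm\pi/2$.

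For part (2), the class has order two exactly when $\phi_{xyz}^{2}=\mathrm{Id}$ but $\phi_{xyz}\neq\mathrm{Id}$. The squaring condition reads $e^{4i\theta}=1$, i.e. $\theta\in\{0,\pm\pi/2\}$ under $2|\theta|\le\pi$, while excluding the identity removes $\theta=\pm\pi/2$; hence order two is equivalent to $\theta=0$, which by item (2) of the Cartan properties means $x,y,z$ lie on a Lagrangian plane. Thus every element of $\Gamma(x,y,z)$ has order two if and only if $\Bbb{A}$ vanishes on every triple of $\Lambda_{CG}(\Gamma)$. For the converse I would note that an invariant Lagrangian plane $L$ confines $\Lambda_{CG}(\Gamma)$ to the $\Bbb{R}$-circle $\partial L$; then every triple has $\theta=0$, and since no three points of a totally real plane can also lie on a complex line, every $\Phi_{xyz}$ has order exactly two.

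The main obstacle is the forward implication of part (2): passing from ``each triple spans some Lagrangian plane'' to ``all of $\Lambda_{CG}(\Gamma)$ lies on one $\Gamma$-invariant Lagrangian plane''. Unlike complex lines, a Lagrangian plane is not pinned down by two points, so the two-point argument of part (1) is unavailable. Here I would invoke the uniqueness of the $\Bbb{R}$-circle through three points of vanishing Cartan invariant not lying on a complex line (see \cite{goldman}): this produces a unique $\Bbb{R}$-circle $C\supset\Lambda_{CG}(\Gamma)$, and since $\gamma(C)$ is again an $\Bbb{R}$-circle containing $\gamma(\Lambda_{CG}(\Gamma))=\Lambda_{CG}(\Gamma)$, uniqueness gives $\gamma(C)=C$ for all $\gamma\in\Gamma$; the Lagrangian plane bounded by $C$ is then invariant, so $\Gamma$ is $\Bbb{R}$-Fuchsian. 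Throughout, the degenerate case $n=2$, where $z^\bot(z)$ is a point and $\Gamma(x,y,z)$ is trivial, should be disposed of separately.
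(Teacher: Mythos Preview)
The paper gives no proof here---it simply labels the result a ``straightforward corollary''---so your spectral reduction via part~(\ref{i:cartan1}) of the preceding theorem is exactly the intended mechanism, and your handling of part~(1), including the passage from ``every triple lies on a complex line'' to a single $\Gamma$-invariant line, is correct.

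Your own misgivings about the forward direction of part~(2) are well founded, and the step cannot be completed as you propose. Uniqueness of the $\Bbb{R}$-circle through three boundary points with $\Bbb{A}=0$ does not force a fourth limit point onto that same circle: two boundary points lie on infinitely many $\Bbb{R}$-circles, so the vanishing of $\Bbb{A}(p_1,p_2,p_4)$ only places $p_4$ on \emph{some} $\Bbb{R}$-circle through $p_1,p_2$, not necessarily on the one determined by $p_1,p_2,p_3$. In fact, for $n\ge 3$ the implication fails as stated: take $\Gamma$ a lattice in $PO(1,n)\subset PU(1,n)$. Then $\Lambda_{CG}(\Gamma)=\partial\Bbb{H}^n_{\Bbb{R}}$, all Hermitian products among limit points are real, hence $\Bbb{A}\equiv 0$ on every triple and each $\Phi_{xyz}$ has order exactly two by your eigenvalue computation; yet $\Gamma$ preserves no Lagrangian plane in the paper's sense, since $\partial\Bbb{H}^n_{\Bbb{R}}$ is an $(n-1)$-sphere and cannot sit inside any $\Bbb{R}$-circle. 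So either ``$\Bbb{R}$-Fuchsian'' is meant in the broader sense of an invariant totally real subspace of arbitrary dimension, or part~(2) requires an additional hypothesis; in either case your proposed uniqueness argument cannot be repaired without first resolving this.
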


\begin{definition}
Let $y\in \Lambda_{CG}(\Gamma)$, we define the Cartan group at $y$ as 
$$
\mathfrak{C}(y)=\overline
{
\{
\Phi_{x_1y_1y}\cdots\Phi_{x_ny_ny}\vert x_i,y_i\in \Lambda_{CG}(\Gamma) \textrm{ and } x_i\neq  y_i 
\}
}
$$
\end{definition}

We will illustrate the Cartan group in a forthcoming article.
\section*{Acknowledgments}
The authors would like to thank to W. Barrera, N. Gusevskii, J. P. Navarrete, J. Parker, M. Ucan  \&  J. Seade  for 
fruitful conversations. We are gratefull to M. Mendez and  A. Guillot for introducing us into the work of C. Frances. The second author, also thanks Carlos Cabrera and J. Seade for their warm invitation to the  UCIM  at the UNAM
and to the staff of the UCIM for their kindness and help during his stay.

\bibliographystyle{amsalpha} 
\bibliography{mybib}

\end{document}